\newcommand{\reals}{\mathbb{R}}
\newcommand{\Det}{\mathbf{Det}}
\newcommand\ceil[1]{\lceil #1 \rceil}
\newcommand\bigO{\mathcal{O}}
\DeclareMathOperator*{\argmin}{arg\,min}
\DeclareMathOperator{\Tr}{\mathbf{Tr}}
\newtheorem{assumption}{Assumption}
\newtheorem{theorem}{Theorem}
\newtheorem{lemma}[theorem]{Lemma}
\newtheorem{proposition}[theorem]{Proposition}
\newtheorem{corollary}[theorem]{Corollary}
\newtheorem{remark}{Remark}
\numberwithin{assumption}{section}
\numberwithin{definition}{section}
\numberwithin{theorem}{section}
\numberwithin{remark}{section}
\begin{document}

\title{Non-asymptotic Global Convergence Analysis of BFGS\\ with the Armijo-Wolfe Line Search}

\author{Qiujiang Jin\thanks{Department of Electrical and Computer Engineering, The University of Texas at Austin, Austin, TX, USA  \{qiujiang@austin.utexas.edu\}} \qquad Ruichen Jiang\thanks{Department of Electrical and Computer Engineering, The University of Texas at Austin, Austin, TX, USA  \{rjiang@utexas.edu\}} \qquad Aryan Mokhtari\thanks{Department of Electrical and Computer Engineering, The University of Texas at Austin, Austin, TX, USA  \{mokhtari@austin.utexas.edu\}}}

\date{\empty}

\maketitle

\begin{abstract}  
In this paper, we present the first explicit and non-asymptotic global convergence rates of the BFGS method when implemented with an inexact line search scheme satisfying the Armijo-Wolfe conditions. We show that BFGS achieves a global linear convergence rate of $(1 - \frac{1}{\kappa})^t$ for $\mu$-strongly convex functions with $L$-Lipschitz gradients, where $\kappa = \frac{L}{\mu}$ represents the condition number. Additionally, if the objective function's Hessian is Lipschitz, BFGS with the Armijo-Wolfe line search achieves a linear convergence rate that depends solely on the line search parameters, independent of the condition number. We also establish a global superlinear convergence rate of $\mathcal{O}((\frac{1}{t})^t)$. These global bounds are all valid for any starting point $x_0$ and any symmetric positive definite initial Hessian approximation matrix $B_0$, though the choice of $B_0$ impacts the number of iterations needed to achieve these rates. By synthesizing these results, we outline the first global complexity characterization of BFGS with the Armijo-Wolfe line search. Additionally, we clearly define a mechanism for selecting the step size to satisfy the Armijo-Wolfe conditions and characterize its overall complexity. 
\end{abstract}

\newpage

\section{Introduction}

In this paper, we focus on solving the following unconstrained convex minimization problem 
\begin{equation}\label{objective_function}
    \min_{x \in \mathbb{R}^d} f(x),
\end{equation}
where $f: \reals^d \rightarrow \reals$ is strongly convex and twice differentiable. Quasi-Newton methods are among the most popular algorithms for solving this class of problems due to their simplicity and fast convergence. Like gradient descent-type methods, they require only gradient information for implementation, while they aim to mimic the behavior of Newton's method by using gradient information to approximate the curvature of the objective function. There are several variations of quasi-Newton methods, primarily distinguished by their update rules for the Hessian approximation matrices. The most well-known among these include the Davidon-Fletcher-Powell (DFP) method \cite{davidon1959variable,fletcher1963rapidly}, the Broyden-Fletcher-Goldfarb-Shanno (BFGS) method \cite{broyden1970convergence,fletcher1970new,goldfarb1970family,shanno1970conditioning}, the Symmetric Rank-One (SR1) method \cite{conn1991convergence,khalfan1993theoretical},  and the Broyden method \cite{broyden1965class}. Apart from these classical methods, other variants have also been proposed in the literature, including randomized quasi-Newton methods \cite{gower2016stochastic,gower2017randomized,kovalev2020fast,lin2021greedy, lin2022explicit}, greedy quasi-Newton methods \cite{rodomanov2020greedy,lin2021greedy,lin2022explicit,ji2023greedy}, and those based on online learning techniques~\cite{jiang2023online, jiang2023accelerated}. In this paper, we mainly focus on the global analysis of the BFGS method, arguably the most successful quasi-Newton method in practice. 

The classic analyses of BFGS, including \cite{broyden1973local,dennis1974characterization,griewank1982local,dennis1989convergence,yuan1991modified,al1998global,li1999globally,yabe2007local,mokhtari2017iqn,gao2019quasi}, primarily focused on demonstrating local asymptotic superlinear convergence without addressing an explicit global convergence rate when BFGS is deployed with a line-search scheme. While attempts have been made to establish global convergence for quasi-Newton methods using line search or trust-region techniques in previous studies~\cite{powell1971convergence,Powell,byrd1987global,QN_tool,khalfan1993theoretical, byrd1996analysis}, these efforts provided only asymptotic convergence guarantees without explicit global convergence rates, thus not fully characterizing the global convergence rate of classical quasi-Newton methods.

In recent years, there have been efforts to characterize the explicit convergence rate of BFGS within a local neighborhood of the solution, establishing a superlinear convergence rate of the form $(\frac{1}{\sqrt{t}})^t$; see, for example, \cite{rodomanov2020rates,rodomanov2020ratesnew,ye2023towards,qiujiang2020quasinewton}. However, these results focus solely on local convergence analysis of BFGS under conditions where the stepsize is consistently set to one, the iterate remains close to the optimal solution, and the initial Hessian approximation matrix meets certain necessary conditions. Consequently, these analyses do not extend to providing a global convergence guarantee. For more details on this subject, we refer the reader to the discussion section in \cite{jin2024non}.

To the best of our knowledge, only few papers are closely related to our work and establish a global non-asymptotic guarantee for BFGS. In \cite{krutikov2023convergence}, it was shown that BFGS with exact line search achieves a global linear rate of $(1 - \frac{2\mu^3}{L^3} (1 + \frac{\mu \Tr(B_0^{-1})}{t})^{-1}(1 + \frac{\Tr(B_0)}{Lt})^{-1})^t$, where $\mu$ is the strong convexity parameter, $L$ is the Lipschitz constant of the gradient, $B_0$ is the initial Hessian approximation matrix, and $\Tr(\cdot)$ denotes the trace of a matrix. After $t = O(d)$ iterations, this rate approaches $(1-\frac{2\mu^3}{L^3})^t$, which is significantly slower than the convergence rate of gradient descent. Additionally, a recent draft in \cite{antonglobal} studied the global convergence of BFGS under an inexact line search. While this work establishes a local superlinear rate, it only shows a global linear rate of the form $(1-\frac{\mu^2}{L^2})^t$. Hence, both these results fail to prove any global advantage for BFGS over gradient descent. In \cite{jin2024non}, the authors improved upon \cite{krutikov2023convergence} by showing a better global linear convergence rate and a faster superlinear rate for BFGS with exact line search. Specifically, for an $L$-Lipschitz and $\mu$-strongly convex function, BFGS initialized with $B_0 = LI$ achieves a global linear rate of $(1 - \frac{\mu^{{3}/{2}}}{L^{{3}/{2}}})^{t}$ for $t \geq 1$, while BFGS with $B_0 = \mu I$ achieves the same rate after $d\log \kappa$ iterations. With the additional assumption that the objective's Hessian is Lipschitz, an improved linear rate of $(1-\frac{\mu}{L})^t$ is achieved after $\mathcal{O}(\kappa)$ iterations when $B_0 = LI$ and after $\mathcal{O}(d\log \kappa + \kappa)$ when $B_0 = \mu I$, matching the rate of gradient descent. A superlinear rate of $(\nicefrac{1}{\sqrt{t}})^t$ was also shown when the number of iterations exceeds specific thresholds.

\noindent\textbf{Contributions.} In this paper, we analyze the BFGS method combined with the Armijo-Wolfe line search, the most commonly used line search criteria in practical BFGS applications; see, e.g., \cite{nocedal2006numerical}. For minimizing an $L$-smooth and $\mu$-strongly convex function, we present a global convergence rate of $(1-\frac{\mu}{L})^t$. To the best of our knowledge, this is the first result demonstrating a global linear convergence rate for BFGS that matches the rate of gradient descent under these assumptions. Furthermore, we show that if the objective function's Hessian is Lipschitz continuous, BFGS with the Armijo-Wolfe line search converges at a linear rate determined solely by the line search parameters and not the problem’s condition number, $\kappa = \nicefrac{L}{\mu}$, when the number of iterations is sufficiently large. Finally, we prove a global non-asymptotic superlinear convergence rate of $(\nicefrac{h(d, \kappa, C_0)}{t})^t$, where $h(d, \kappa, C_0)$ depends on the condition number $\kappa$, the dimension $d$, and the weighted distance between the initial point $x_0$ and the optimal solution $x_*$, denoted by $C_0$. We summarize our results in Table~\ref{tab:comparison}. By combining these convergence results, we establish the total iteration complexity of BFGS with the Armijo-Wolfe line search. We also specify the line search complexity by investigating a bisection algorithm for choosing the step size that satisfies the Armijo-Wolfe conditions. Our result is one of the first non-asymptotic analysis characterizing the global convergence complexity of the BFGS quasi-Newton method with  an inexact line search. 

\noindent\textbf{Notation.} We denote the $\ell_2$-norm by $\|\cdot\|$, the set of $d \times d$ symmetric positive definite matrices by $\mathbb{S}^d_{++}$, and use $A \preceq B$ to mean $B - A$ is symmetric positive semi-definite. The trace and determinant of a matrix $A$ are represented as $\Tr(A)$ and $\Det(A)$, respectively.

\begin{table}[t!]
  \centering
  \begin{tabular}{ |c|c|c|c| }
    \hline
    Initial Matrix  & Convergence Phase & Convergence Rate & Starting moment \\
    \hline
    \hline
    $B_0$ & Linear phase I & $ \left(1 - \frac{1}{\kappa}\right)^{t}$ & $\Psi(\bar{B_0})$ \\
    \hline
    $B_0$ & Linear phase II & $\left(1 - \frac{1}{3}\right)^{t}$  & $ \Psi(\tilde{B_0}) + C_0\Psi(\bar{B_0}) + C_0\kappa $ \\
    \hline
    $B_0$ & Superlinear phase & $\left(\frac{\Psi(\tilde{B_0}) + C_0\Psi(\bar{B_0}) + C_0\kappa}{t}\right)^{t}$ & $ \Psi(\tilde{B_0}) + C_0\Psi(\bar{B_0}) + C_0\kappa $ \\
    \hline
        \hline
    $L I$ & Linear phase I & $ \left(1 - \frac{1}{\kappa}\right)^{t}$ & $1$ \\
    \hline
    $L I$ & Linear phase II & $\left(1 - \frac{1}{3}\right)^{t}$  & $ d\kappa + C_0 \kappa $ \\
    \hline
    $L I$ & Superlinear phase & $\left(\frac{d\kappa + C_0\kappa}{t}\right)^{t}$ & $ d\kappa + C_0 \kappa $ \\
    \hline
        \hline
    $\mu I$ & Linear phase I & $\left(1 - \frac{1}{\kappa}\right)^{t}$ & $ d\log\kappa$ \\
    \hline
    $\mu I$ & Linear phase II & $\left(1 - \frac{1}{3}\right)^{t}$  & $(1 + C_0) d\log{\kappa} + C_0 \kappa$\\
    \hline
    $\mu I$ & Superlinear phase & $\left(\frac{(1 + C_0) d\log{\kappa} + C_0\kappa}{t}\right)^{t}$ & $ (1 + C_0) d\log{\kappa} + C_0 \kappa $ \\
    \hline
  \end{tabular}
  \vspace{2mm}
  \caption{Summary of our results for (i) an arbitrary positive definite $B_0$, (ii) $B_0=L I$, and (iii)~$B_0=\mu I$. Here, $\Psi(A) := \Tr(A) - d - \log{\Det(A)}$, $\bar{B}_0 = \frac{1}{L}B_0$ and $\tilde{B}_0 = \nabla^2 f(x_*)^{-\frac{1}{2}} B_0 \nabla^2 f(x_*)^{-\frac{1}{2}}$. The last column shows the number of iterations required to achieve the corresponding linear or superlinear convergence phase. For brevity, the absolute constants are dropped.}
  \label{tab:comparison}
\end{table}

\section{Preliminaries}

In this section, we present the assumptions, notations, and intermediate results useful for the global convergence analysis. First, we state the following assumptions on the objective function $f$.

\begin{assumption}\label{ass_str_cvx}
The function $f$ is twice differentiable and strongly convex with parameter $\mu > 0$.
\end{assumption}

\begin{assumption}\label{ass_smooth}
The gradient of $f$ is Lipschitz continuous with parameter $L > 0$.
\end{assumption}

\vspace{-2mm}

These assumptions are common in the convergence analysis of quasi-Newton methods. Under these, we show a global linear convergence rate of $O((1 - \frac{\mu}{L})^t)$. To achieve a faster linear convergence rate that is independent of the problem condition number, and a global superlinear rate, we require an additional assumption that the objective function Hessian is Lipschitz continuous, as stated next. 

\begin{assumption}\label{ass_Hess_lip}
The Hessian of $f$ is Lipschitz continuous with parameter $M > 0$, i.e., for $x, y \in \mathbb{R}^{d}$, we have $
\|\nabla^{2}{f(x)} - \nabla^{2}{f(y)}\| \leq M\|x - y\|$.
\end{assumption}

Note that the above regularity condition on the Hessian assumption is also common for establishing the superlinear convergence rate of quasi-Newton methods \cite{broyden1973local,dennis1974characterization,griewank1982local,dennis1989convergence,yuan1991modified,al1998global,li1999globally,yabe2007local,mokhtari2017iqn,gao2019quasi}.

\noindent\underline{\textbf{BFGS Update.}}
Next, we state the general update rule of BFGS. If we denote $x_t$ as the iterate at time $t$, the vector $g_t=\nabla f(x_t)$ as the objective function gradient at $x_t$, and $B_t$ as the Hessian approximation matrix at step $t$, then the update is given by
\begin{equation}\label{QN_update}
    x_{t + 1} = x_t + \eta_t d_t, \qquad d_t = -B_t^{-1} g_t,
\end{equation}
where  $\eta_t > 0$ is the step size and $d_t$ is the descent direction. By defining the variable difference $s_t := x_{t + 1} - x_t$ and the gradient difference $y_t := \nabla f(x_{t + 1}) - \nabla f(x_t)$, we can present the Hessian approximation matrix update for BFGS as follows:
\begin{equation}\label{eq:BFGS_update}
    B_{t+1} = B_t - \frac{B_t s_t s_t^\top B_t}{s_t^\top B_t s_t} + \frac{y_t y_t^\top}{s_t^\top y_t}.
\end{equation}
To avoid the costly operation of inverting the matrix $B_t$, one can define the inverse Hessian approximation matrix as $H_t := B_t^{-1}$ and apply the Sherman-Morrison-Woodbury formula to obtain
\begin{equation}
    H_{t+1} := \left(I-\frac{s_t y_t^\top}{y_t^\top s_t}\right) H_t \left(I-\frac{ y_t s_t^\top}{s_t^\top y_t}\right) +\frac{s_t s_t^\top}{y_t^\top s_t}.
\end{equation}
It is well-known that for a strongly convex objective function, the Hessian approximation matrices $B_t$ remain symmetric and positive definite if the initial matrix $B_0$ is symmetric positive definite \cite{nocedal2006numerical}. Therefore, all matrices $B_t$ and $H_t$ are symmetric positive definite throughout this paper.

As mentioned earlier, establishing a global convergence guarantee for BFGS requires pairing it with a line search scheme to select the stepsize $\eta_t$. This paper focuses on implementing BFGS with the Armijo-Wolfe line search, detailed in the following subsection.

\noindent\underline{\textbf{Armijo-Wolfe Line Search.}}
 We consider a stepsize $\eta_t\! >\! 0$ that satisfies the Armijo-Wolfe conditions
\begin{align}
    f(x_t + \eta_t d_t) & \leq f(x_t) + \alpha \eta_t\nabla{f(x_t)}^\top d_t, \label{sufficient_decrease}\\
    \nabla{f(x_t + \eta_t d_t)}^\top d_t & \geq \beta \nabla{f(x_t)}^\top d_t, \label{curvature_condition}
\end{align}
where $\alpha$ and $\beta$ are the line search parameters, satisfying $0 < \alpha < \beta < 1$ and $0 < \alpha < \frac{1}{2}$. The condition in \eqref{sufficient_decrease} is the Armijo condition, ensuring that the step size $\eta_t$ provides a sufficient decrease in the objective function $f$. The condition in \eqref{curvature_condition} is the curvature condition, which guarantees that the slope $\nabla{f(x_t + \eta_t d_t)}^\top d_t$ at $\eta_t$ is not strongly negative, indicating that further movement along $d_t$ would significantly decrease the function value. These conditions provide upper and lower bounds on the admissible step size $\eta_t$. In some references, the Armijo-Wolfe line search conditions are known as the weak Wolfe conditions \cite{wolfe1,wolfe2}. The procedure for finding $\eta_t$ that satisfies these conditions is described in Section~\ref{sec:complexity}. Next lemma presents key properties of the Armijo-Wolfe conditions.

\begin{lemma}\label{lemma_line_search}
    Consider the BFGS method with Armijo-Wolfe inexact line search, where the step size satisfies the conditions in  \eqref{sufficient_decrease} and \eqref{curvature_condition}. Then, for any initial point $x_0$ and any symmetric positive definite initial Hessian approximation matrix $B_0$, the following results hold for all $t \geq 0$:
    \begin{equation}
        \frac{f(x_t)-f(x_{t + 1})}{-{g}_t^\top {s}_t} \geq \alpha, \qquad \frac{{y}_t^\top {s}_t}{- {g}_t^\top {s}_t} \geq 1 - \beta, \qquad \text{and} \qquad f(x_{t + 1}) \leq f(x_t).
    \end{equation}
\end{lemma}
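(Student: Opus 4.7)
The plan is to derive each of the three inequalities directly from the Armijo-Wolfe conditions \eqref{sufficient_decrease}--\eqref{curvature_condition} together with the descent property of the BFGS direction. The key observation to make upfront is that since $B_t$ is symmetric positive definite and $d_t = -B_t^{-1} g_t$, we have $g_t^\top d_t = -g_t^\top B_t^{-1} g_t < 0$ whenever $g_t \neq 0$, so $d_t$ is indeed a descent direction and $-g_t^\top s_t = \eta_t (-g_t^\top d_t) > 0$. This positivity is what makes division by $-g_t^\top s_t$ legitimate in the first two inequalities. I would state this observation at the start of the proof.

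For the first inequality, I would use the identity $s_t = \eta_t d_t$ to rewrite the Armijo condition \eqref{sufficient_decrease} as $f(x_{t+1}) \leq f(x_t) + \alpha g_t^\top s_t$, then rearrange to $f(x_t) - f(x_{t+1}) \geq -\alpha g_t^\top s_t = \alpha(-g_t^\top s_t)$, and divide by the positive quantity $-g_t^\top s_t$. For the second inequality, I would multiply the curvature condition \eqref{curvature_condition} by $\eta_t > 0$ to get $\nabla f(x_{t+1})^\top s_t \geq \beta g_t^\top s_t$, subtract $g_t^\top s_t$ from both sides to obtain $y_t^\top s_t \geq (\beta - 1) g_t^\top s_t = (1-\beta)(-g_t^\top s_t)$, and divide by $-g_t^\top s_t > 0$. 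For the third inequality, I would combine the rearranged Armijo bound $f(x_{t+1}) \leq f(x_t) + \alpha g_t^\top s_t$ with the fact that $g_t^\top s_t < 0$ and $\alpha > 0$, yielding $f(x_{t+1}) \leq f(x_t)$ immediately.

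There is no real obstacle here; the proof is essentially a direct unpacking of the definitions. The only subtle point worth flagging in the write-up is that the strict inequality $-g_t^\top s_t > 0$ relies on $g_t \neq 0$, so I would briefly note that if $g_t = 0$ then $x_t$ is already optimal and the inequalities hold trivially (with $s_t = 0$, both numerators and denominators vanish, and $f(x_{t+1}) = f(x_t)$ since no step is taken). With these considerations, the proof is a short three-part argument in the same order as the three claims are stated.
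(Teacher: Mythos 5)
Your proposal is correct and follows essentially the same route as the paper's own proof: establish $-g_t^\top s_t = \eta_t\, g_t^\top B_t^{-1} g_t > 0$ from the positive definiteness of $B_t$, then obtain the first and third claims by rearranging the Armijo condition and the second by subtracting $g_t^\top d_t$ from the curvature condition and scaling by $\eta_t$. Your explicit handling of the degenerate case $g_t = 0$ is a minor addition the paper only mentions parenthetically, but it does not change the argument.
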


\begin{remark}
While in this paper we only focus on the Armijo-Wolfe line search, our results are also valid for some other line search schemes that require stricter conditions. For instance, in the strong Wolfe line search, given  $0 < \alpha < \beta < 1$ and $ 0 < \alpha < \frac{1}{2}$, the required conditions for the step size are 
\begin{equation*}
    f(x_t + \eta_t d_t) \leq f(x_t) + \alpha \eta_t\nabla{f(x_t)}^\top d_t, \qquad |\nabla{f(x_t + \eta_t d_t)}^\top d_t| \leq \beta \nabla{f(x_t)}^\top d_t,
\end{equation*}
 Indeed, if $\eta_t$ satisfies the strong Wolfe conditions, it also satisfies the Armijo-Wolfe conditions. 

Another commonly employed line search scheme is Armijo–Goldstein, which imposes the conditions
\begin{equation*}
    -c_1 \eta_t \nabla{f(x_t)}^\top d_t \leq f(x_t) - f(x_{t }+\eta_t d_t) \leq -c_2 \eta_t \nabla{f(x_t)}^\top d_t,
\end{equation*}
with $0 < c_1 \leq c_2 < 1$. The lower bound on $f(x_t) - f(x_{t }+\eta_t d_t)$ in the Armijo–Goldstein line search indicates that $\eta_t$ satisfies the sufficient decrease condition in \eqref{sufficient_decrease} required for the Armijo-Wolfe conditions, with $\alpha = c_1$. Moreover, given the convexity of $f$, the upper bound on $f(x_t) - f(x_{t }+\eta_t d_t)$ in the Armijo–Goldstein line search suggests $-\eta_t \nabla f(x_{t }+\eta_t d_t)^\top d_t \leq f(x_t) - f(x_{t }+\eta_t d_t) \leq -c_2 \eta_t \nabla{f(x_t)}^\top d_t$. Thus, $\eta_t$ also meets the curvature condition in \eqref{curvature_condition} required in the Armijo-Wolfe conditions with $\beta = c_2$. Hence, all our results derived under the Armijo-Wolfe line search are also valid for both the strong Wolfe line search and the Armijo–Goldstein line search.
\end{remark}

\section{Convergence Analysis}\label{sec:convergence}

In this section, we present our theoretical framework for analyzing the global linear convergence rates of BFGS with the Armijo-Wolfe line search scheme. To start, we introduce some necessary definitions and notations. We define the average Hessian matrices $J_t$ and $G_t$ as 
\begin{align}\label{def_J_G}
    J_t := \int_{0}^{1}\nabla^2{f(x_t + \tau (x_{t + 1} - x_t))}d\tau, \qquad G_t := \int_{0}^{1}\nabla^2{f(x_t + \tau (x_{*} - x_t))}d\tau.
\end{align}Further, for measuring the suboptimality of the iterates we define the sequence $C_t$ as
\begin{equation}\label{distance}
    C_t := \frac{M}{\mu^{\frac{3}{2}}}\sqrt{2(f(x_t) - f(x_*))}, \qquad \forall t \geq 0,
\end{equation}
where $M$ is the Lipschitz constant of the Hessian defined in Assumption~\ref{ass_Hess_lip} and $\mu$ is the strong convexity parameter introduced in Assumption~\ref{ass_str_cvx}.
To analyze the dynamics of the Hessian approximation matrices $\{B_t\}_{t = 0}^{+\infty}$, we use the function $\Psi(A)$ 
\begin{equation}\label{potential_function}
    \Psi(A) := \Tr(A) - d - \log{\Det(A)},
\end{equation}
well-defined for any $A \in \mathbb{S}^d_{++}$. It was introduced in \cite{QN_tool} to capture the discrepancy between $A$ and the identity matrix $I$. Note that $\Psi(A) \geq 0$ for any $A \in \mathbb{S}^d_{++}$ and $\Psi(A) = 0$ if and only if $A = I$.

Before we start convergence analysis, given any weight matrix $P \in \mathbb{S}^d_{++}$, we define the weighted versions of the vectors $g_t$, $s_t$, $y_t$, $d_t$ and the matrix $B_t$, $J_t$ as 
\begin{equation}\label{weighted_vector}
    \hat{g}_t = P^{-\frac{1}{2}}g_t, \qquad \hat{s}_t = P^{\frac{1}{2}} s_t, \qquad \hat{y}_t = P^{-\frac{1}{2}}y_t, \qquad \hat{d}_t = P^{\frac{1}{2}} d_t.
\end{equation}
\begin{equation}\label{weighted_matrix}
    \hat{B}_t = P^{-\frac{1}{2}}B_t P^{-\frac{1}{2}}, \qquad \hat{J}_t = P^{-\frac{1}{2}}J_t P^{-\frac{1}{2}}.
\end{equation}
Note that these weighted matrices and vectors preserve many properties of their unweighted counterparts. For instance, two of these main properties are $\hat{g}_t^\top\hat{s}_t = g_t^\top s_t $ and $\hat{y}_t^\top\hat{s}_t = y_t^\top s_t$. Similarly, the update for the weighted version of Hessian approximation matrices closely mirrors the update of their unweighted counterparts, as noted in the following expression:
\begin{equation}\label{BFGS_weighted}
    \hat{B}_{t+ 1} = \hat{B}_t - \frac{\hat{B}_t \hat{s}_t \hat{s}_t^\top \hat{B}_t}{\hat{s}_t^\top \hat{B}_t \hat{s}_t} + \frac{\hat{y}_t \hat{y}_t^\top}{\hat{s}_t^\top \hat{y}_t}, \qquad \forall t \geq 0.
\end{equation}
Finally, we define a crucial quantity, $\hat{\theta}_t$, which measures the angle between the weighted descent direction and the negative of the weighted gradient direction, satisfying 
\vspace{-1mm}
\begin{equation}\label{eq:theta}
    \cos(\hat{\theta}_t) = \frac{-\hat{g}_t^\top \hat{s}_t}{\|\hat{g}_t\|\|\hat{s}_t\|}.
    \vspace{-2mm}
\end{equation}

\subsection{Intermediate Results}

In this section, we present our framework for analyzing the convergence of BFGS with an inexact line search. We first characterize the relationship between the function value decrease at each iteration and key quantities, including the angle $\hat{\theta}_t$ defined in \eqref{eq:theta}.

\begin{proposition}\label{lemma_bound}
    Let $\{x_t\}_{t\geq 0}$ be the iterates generated by BFGS. Recall the definitions of weighted vectors in \eqref{weighted_vector}. Then, for any weight matrix $P$ and for all $t \geq 1$, we have
    \begin{equation}\label{eq:product}
        \frac{f(x_{t}) - f(x_*)}{f(x_0) - f(x_*)} \leq \bigg(1 - \bigg(\prod_{i = 0}^{t - 1} \hat{p}_i \hat{q}_{i} \hat{n}_i\frac{\cos^2(\hat{\theta}_{i})}{\hat{m}_{i}}\bigg)^{\frac{1}{t}}\bigg)^{t}.
         \end{equation}
    where $\hat{p}_t $, $\hat{q}_t $, $\hat{m}_t $ and $\hat{n}_t$ are defined as
    \begin{equation}\label{eq:def_alpha_q_m}
        \hat{p}_t := \frac{f(x_t)-f(x_{t+1})}{-\hat{g}_t^\top \hat{s}_t}, \quad \hat{q}_t := \frac{\|\hat{g}_t\|^2}{f(x_t)-f(x_*)}, \quad \hat{m}_t := \frac{\hat{y}_t^\top \hat{s}_t}{\|\hat{s}_t\|^2}, \quad \hat{n}_t= \frac{\hat{y}_t^\top \hat{s}_t}{-\hat{g}_t^\top \hat{s}_t}. 
    \end{equation}    
\end{proposition}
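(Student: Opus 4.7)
The plan is to collapse the five-factor product into a single ratio of function-value gaps, telescope across iterations, and then apply AM-GM twice.

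First, I would observe that the $P$-weighting preserves the two key inner products: $\hat{g}_t^\top \hat{s}_t = g_t^\top s_t$ and $\hat{y}_t^\top \hat{s}_t = y_t^\top s_t$. Substituting the definitions of $\hat{p}_t$, $\hat{q}_t$, $\hat{m}_t$, $\hat{n}_t$ and the identity $\cos^2(\hat{\theta}_t) = (\hat{g}_t^\top \hat{s}_t)^2 / (\|\hat{g}_t\|^2 \|\hat{s}_t\|^2)$, a direct cancellation---$\|\hat{g}_t\|^2$ in $\hat{q}_t$ and $\|\hat{s}_t\|^2$ in $\hat{m}_t$ are absorbed by the denominator of $\cos^2(\hat{\theta}_t)$, $\hat{y}_t^\top \hat{s}_t$ cancels between $\hat{n}_t$ and $\hat{m}_t$, and the two $-\hat{g}_t^\top \hat{s}_t$ factors in the denominators of $\hat{p}_t$ and $\hat{n}_t$ cancel against $(\hat{g}_t^\top \hat{s}_t)^2$ in the numerator of $\cos^2(\hat{\theta}_t)$---yields the $P$-independent identity
\[
\hat{p}_t\, \hat{q}_t\, \hat{n}_t\, \frac{\cos^2(\hat{\theta}_t)}{\hat{m}_t} \;=\; \frac{f(x_t) - f(x_{t+1})}{f(x_t) - f(x_*)}.
\]

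Writing $a_i$ for this common value, the one-step recursion $f(x_{i+1}) - f(x_*) = (1 - a_i)(f(x_i) - f(x_*))$ telescopes to $\frac{f(x_t) - f(x_*)}{f(x_0) - f(x_*)} = \prod_{i=0}^{t-1}(1 - a_i)$. Lemma~\ref{lemma_line_search} ensures each $a_i$ lies in $[0,1]$: the Armijo condition gives $f(x_{i+1}) \leq f(x_i)$, hence $a_i \geq 0$, and $f(x_{i+1}) \geq f(x_*)$ forces $a_i \leq 1$.

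Two back-to-back applications of AM-GM then deliver \eqref{eq:product}. Applied to the $1 - a_i$, AM-GM gives $\bigl(\prod_i (1-a_i)\bigr)^{1/t} \leq 1 - \tfrac{1}{t}\sum_i a_i$. Applied to the $a_i$, it gives $\tfrac{1}{t}\sum_i a_i \geq \bigl(\prod_i a_i\bigr)^{1/t}$, so $1 - \tfrac{1}{t}\sum_i a_i \leq 1 - \bigl(\prod_i a_i\bigr)^{1/t}$. Chaining these and raising to the $t$-th power (valid because $a_i \leq 1$ keeps both sides nonnegative) yields the claim. The only step requiring genuine care is the algebraic cancellation in the first paragraph; everything else is a telescoping product and a standard double AM-GM argument.
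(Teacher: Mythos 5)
Your proposal is correct and follows essentially the same route as the paper: both reduce the five-factor product to the exact identity $\hat{p}_t\hat{q}_t\hat{n}_t\cos^2(\hat{\theta}_t)/\hat{m}_t = \frac{f(x_t)-f(x_{t+1})}{f(x_t)-f(x_*)}$ via the invariance $\hat{g}_t^\top\hat{s}_t = g_t^\top s_t$, $\hat{y}_t^\top\hat{s}_t = y_t^\top s_t$, telescope the one-step contraction, and apply the AM--GM inequality twice. Your explicit remark that $a_i \in [0,1]$ (from $f(x_*) \leq f(x_{i+1}) \leq f(x_i)$) justifies both AM--GM steps and the final exponentiation, which the paper leaves partly implicit.
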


\vspace{-1mm}
This result shows the convergence rate of BFGS with Armijo-Wolfe line search depends on four products: $\prod_{i = 0}^{t - 1}\hat{p}_i$, $\prod_{i = 0}^{t - 1}\hat{q}_i$, $\prod_{i = 0}^{t - 1}\hat{n}_i$, and $\prod_{i = 0}^{t - 1}\frac{\cos^2(\hat{\theta}_i)}{\hat{m}_i}$. To establish an explicit rate, we need lower bounds on these products. Lemma~\ref{lemma_line_search} shows that the lower bounds for $\prod_{i = 0}^{t - 1}\hat{p}_i$ and $\prod_{i = 0}^{t - 1}\hat{n}_i$ depend on the inexact line search parameters $\alpha$ and $\beta$. We will further prove that if the unit step size $\eta_t = 1$ satisfies the Armijo-Wolfe conditions, better lower bounds can be obtained for these products. The lower bounds for $\prod_{i = 0}^{t - 1}\hat{q}_i$ and $\prod_{i = 0}^{t - 1}\frac{\cos^2(\hat{\theta}_i)}{\hat{m}_i}$ were established in previous work \cite{jin2024non} as presented in Appendix~\ref{Results_borrowed}. Specifically, the bounds for $\prod_{i = 0}^{t - 1}\hat{q}_i$ depend on the choice of the weight matrix, which varies in different sections of the paper, requiring separate bounds for each case. However, the bound for $\prod_{i = 0}^{t - 1}\frac{\cos^2(\hat{\theta}_i)}{\hat{m}_i}$ does not require separate treatment. This is explicitly established in Proposition~\ref{lemma_BFGS}, a classical result, as discussed in \cite[Section 6.4]{nocedal2006numerical}. We build all our linear and superlinear results by establishing different bounds on the terms in \eqref{eq:product}. 

\section{Global Linear Convergence Rates}\label{sec:linear}

Building on the tools introduced in Section~\ref{sec:convergence}, we establish explicit global linear convergence rates for BFGS with the Armijo-Wolfe line search, requiring only the strong convexity and gradient Lipschitz conditions from Assumptions~\ref{ass_str_cvx} and \ref{ass_smooth}. Our proof leverages the fundamental inequality in \eqref{eq:product} from Proposition~\ref{lemma_bound} and lower bounds on the terms that appear in the contraction factor. Here, we set the weight matrix $P$ to $P = LI$ and hence define the initial weighted matrix $\bar{B}_0$ as $ \bar{B}_0 = \frac{1}{L}B_0$. The following theorem presents our first global linear convergence rate of BFGS for any $B_0 \in \mathbb{S}^d_{++}$. 

\begin{theorem}\label{thm:first_linear_rate}
Suppose Assumptions~\ref{ass_str_cvx} and \ref{ass_smooth} hold. Let $\{x_t\}_{t\geq 0}$ be the iterates generated by BFGS, where the step size satisfies the Armijo-Wolfe conditions in \eqref{sufficient_decrease} and \eqref{curvature_condition}. For any initial point $x_0 \in \mathbb{R}^{d}$ and any initial Hessian approximation matrix $B_0 \in \mathbb{S}^d_{++}$, we have 
\begin{equation}\label{eq:first_linear_rate}
    \frac{f(x_t) - f(x_*)}{f(x_0) - f(x_*)} \leq \left(1 - e^{-\frac{\Psi(\bar{B}_0)}{t}}\frac{2\alpha(1 - \beta)}{\kappa}\right)^{t}, \quad \forall t \geq 1.
\end{equation}
\end{theorem}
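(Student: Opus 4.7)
\textbf{Proof proposal for Theorem~\ref{thm:first_linear_rate}.}
The plan is to apply the master inequality of Proposition~\ref{lemma_bound} with the weight matrix $P = L I$ and lower bound each of the four products $\prod \hat p_i$, $\prod \hat q_i$, $\prod \hat n_i$, and $\prod \cos^2(\hat\theta_i)/\hat m_i$ separately. With this choice of $P$, the weighted initial matrix becomes $\bar B_0 = \frac{1}{L} B_0$, so the exponent $\Psi(\bar B_0)$ appearing in the target bound arises naturally from the BFGS potential-function analysis. A useful preliminary observation is that Lipschitz gradients give $J_t \preceq L I$, hence the weighted average Hessian satisfies $\hat J_t = \frac{1}{L} J_t \preceq I$, which is exactly the condition under which the classical BFGS trace--determinant identity yields a monotone bound on $\Psi(\bar B_t)$.

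First I would handle the two line-search-dependent products. Because the weighting by $P = L I$ leaves inner products invariant in the sense that $\hat g_t^\top \hat s_t = g_t^\top s_t$ and $\hat y_t^\top \hat s_t = y_t^\top s_t$, Lemma~\ref{lemma_line_search} directly gives $\hat p_t \ge \alpha$ and $\hat n_t \ge 1 - \beta$ for every $t \ge 0$, and hence
\[
\prod_{i=0}^{t-1} \hat p_i \ge \alpha^t, \qquad \prod_{i=0}^{t-1} \hat n_i \ge (1-\beta)^t.
\]
Next I would lower bound $\hat q_t$. Strong convexity gives the standard inequality $\|g_t\|^2 \ge 2\mu\bigl(f(x_t) - f(x_*)\bigr)$, and since $\|\hat g_t\|^2 = \|g_t\|^2/L$, we obtain $\hat q_t \ge 2\mu/L = 2/\kappa$, whence $\prod_{i=0}^{t-1} \hat q_i \ge (2/\kappa)^t$.

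The remaining product $\prod_{i=0}^{t-1} \cos^2(\hat\theta_i)/\hat m_i$ is the heart of the argument and is the main obstacle. I would invoke the classical BFGS potential-function bound recalled as Proposition~\ref{lemma_BFGS} (the analysis of Section~6.4 of \cite{nocedal2006numerical} applied in the weighted space): because $\hat J_t \preceq I$, telescoping the identity for $\Psi(\bar B_{t+1}) - \Psi(\bar B_t)$ along the BFGS update \eqref{BFGS_weighted} and discarding the nonnegative Bregman-type remainder term yields
\[
\sum_{i=0}^{t-1} \log\!\left(\frac{\cos^2(\hat\theta_i)}{\hat m_i}\right) \;\ge\; -\,\Psi(\bar B_0),
\]
and exponentiating gives $\prod_{i=0}^{t-1} \cos^2(\hat\theta_i)/\hat m_i \ge e^{-\Psi(\bar B_0)}$.

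Finally I would combine the four bounds. Multiplying them together yields
\[
\prod_{i=0}^{t-1} \hat p_i \hat q_i \hat n_i \frac{\cos^2(\hat\theta_i)}{\hat m_i} \;\ge\; \left(\frac{2\alpha(1-\beta)}{\kappa}\right)^{\!t} e^{-\Psi(\bar B_0)}.
\]
Taking the $t$-th root produces the factor $e^{-\Psi(\bar B_0)/t} \cdot \frac{2\alpha(1-\beta)}{\kappa}$, and substituting into \eqref{eq:product} from Proposition~\ref{lemma_bound} gives precisely the stated bound \eqref{eq:first_linear_rate}. The only real effort in the proof is verifying that the standard trace--determinant telescoping is valid under the inexact Armijo--Wolfe step, which reduces to the fact that $\hat y_t^\top \hat s_t > 0$ (secured by the curvature condition \eqref{curvature_condition}) so that all $\bar B_t$ remain in $\mathbb{S}^d_{++}$ and $\Psi(\bar B_t)$ stays well defined throughout.
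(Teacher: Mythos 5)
Your proposal is correct and follows essentially the same route as the paper: it invokes Proposition~\ref{lemma_bound} with $P = LI$, uses Lemma~\ref{lemma_line_search} for $\hat p_t \ge \alpha$ and $\hat n_t \ge 1-\beta$, the strong-convexity bound $\hat q_t \ge 2/\kappa$ (Lemma~\ref{lemma_eigenvalue}(a)), and the potential-function telescoping of Proposition~\ref{lemma_BFGS} together with $\|\hat y_t\|^2/(\hat y_t^\top\hat s_t)\le 1$ (Lemma~\ref{lem:y^2/sy}(a), which you rederive via $\hat J_t \preceq I$) to get $\prod_{i=0}^{t-1}\cos^2(\hat\theta_i)/\hat m_i \ge e^{-\Psi(\bar B_0)}$. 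The combination of these four bounds in \eqref{eq:product} is exactly the paper's argument, so no further comment is needed.
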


\begin{remark}
   In \cite{jin2024non}, the authors analyzed BFGS with exact line search and established a global linear rate of $(1 - e^{-\frac{\Psi(\bar{B}_0)}{t}}\frac{1}{\kappa(1+\sqrt{\kappa})})^{t}$. In comparison, our result in \eqref{eq:first_linear_rate} achieves a faster linear rate by eliminating the $\sqrt{\kappa}$ factor in the denominator. This improvement arises from using the Armijo-Wolfe conditions. Specifically, under these conditions, we show $\frac{f(x_t)-f(x_{t + 1})}{-{g}_t^\top {s}_t} \geq \alpha$ as shown in Lemma~\ref{lemma_line_search}, where $\alpha \in (0,1/2)$ is a line search parameter. In contrast, using exact line search, the authors in \cite{jin2024non} proved that $\frac{f(x_t) - f(x_{t + 1})}{-{g}_t^\top {s}_t} \geq \frac{2}{\sqrt{\kappa}+1}$, thus leading to the extra $\sqrt{\kappa}$ factor in their rate. 
\end{remark}
\vspace{-2mm}

From Theorem~\ref{thm:first_linear_rate}, we observe that the linear convergence rate is determined by the quantity $\Psi(\bar{B}_0)$ Thus, to simplify our bounds, we consider two different initializations: $B_0 = L I$ and $B_0 = \mu I$. 

\begin{corollary}\label{coro:first_linear_rate}
Suppose Assumptions~\ref{ass_str_cvx} and \ref{ass_smooth} hold, $\{x_t\}_{t\geq 0}$  are generated by BFGS with step size satisfying the Armijo-Wolfe conditions in \eqref{sufficient_decrease} and \eqref{curvature_condition}, and $x_0 \in \mathbb{R}^{d}$ is an arbitrary initial point.

\begin{itemize}
\vspace{-1mm}
\item If the initial Hessian approximation matrix is set as $B_0 = LI$, then for any $t \geq 1$
\begin{equation}\label{eq:first_linear_rate_L}
    \frac{f(x_t) - f(x_*)}{f(x_0) - f(x_*)} \leq \left(1 - \frac{2\alpha(1 - \beta)}{\kappa}\right)^{t}.
\end{equation}
\item  If the initial Hessian approximation matrix is set as $B_0 = \mu I$, then for any $t \geq 1$ we have $ \frac{f(x_t) - f(x_*)}{f(x_0) - f(x_*)} \leq (1 - e^{-\frac{d\log{\kappa}}{t}}\frac{2\alpha(1 - \beta)}{\kappa})^{t}$. Moreover, for $t \geq d\log{\kappa}$, we have 
\vspace{-2mm}
\begin{equation}\label{eq:first_linear_rate_mu}
    \frac{f(x_t) - f(x_*)}{f(x_0) - f(x_*)} \leq \left(1 - \frac{2\alpha(1 - \beta)}{3\kappa}\right)^{t}.
\end{equation}
\end{itemize}
\end{corollary}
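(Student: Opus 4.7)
The plan is to deduce both bullet points directly from Theorem~\ref{thm:first_linear_rate} by computing the quantity $\Psi(\bar B_0)$ for the two specified initializations and then simplifying the exponential factor $e^{-\Psi(\bar B_0)/t}$. Since Theorem~\ref{thm:first_linear_rate} already yields the master inequality
\[
\frac{f(x_t)-f(x_*)}{f(x_0)-f(x_*)}\le\left(1-e^{-\Psi(\bar B_0)/t}\,\frac{2\alpha(1-\beta)}{\kappa}\right)^t,\qquad \bar B_0=\tfrac{1}{L}B_0,
\]
the whole argument reduces to algebra on $\Psi(\cdot)=\Tr(\cdot)-d-\log\Det(\cdot)$ applied to a scalar multiple of the identity.

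For the first bullet, I would set $B_0=LI$, so that $\bar B_0=I$ and hence $\Psi(\bar B_0)=\Tr(I)-d-\log\Det(I)=d-d-0=0$. Substituting this into the master inequality makes the exponential factor equal to $1$, yielding \eqref{eq:first_linear_rate_L} immediately for every $t\ge 1$.

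For the second bullet, I would set $B_0=\mu I$, which gives $\bar B_0=\tfrac{\mu}{L}I=\tfrac{1}{\kappa}I$. A direct calculation gives
\[
\Psi(\bar B_0)=\frac{d}{\kappa}-d-\log\!\bigl(\kappa^{-d}\bigr)=d\!\left(\frac{1}{\kappa}-1+\log\kappa\right)\le d\log\kappa,
\]
where the last inequality uses $\kappa\ge 1$ so that $\tfrac{1}{\kappa}-1\le 0$. Plugging this upper bound into the master inequality yields the first (unconditional) claim of the second bullet. To obtain \eqref{eq:first_linear_rate_mu}, I would then restrict to $t\ge d\log\kappa$, which makes $\Psi(\bar B_0)/t\le 1$; consequently $e^{-\Psi(\bar B_0)/t}\ge e^{-1}\ge \tfrac{1}{3}$ (using $e\le 3$), so
\[
1-e^{-\Psi(\bar B_0)/t}\,\frac{2\alpha(1-\beta)}{\kappa}\le 1-\frac{2\alpha(1-\beta)}{3\kappa},
\]
which is exactly \eqref{eq:first_linear_rate_mu}.

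There is essentially no obstacle here beyond the elementary bookkeeping described above, so I would not expect to need any auxiliary lemmas. The only point that deserves a brief justification is the inequality $\tfrac{1}{\kappa}-1+\log\kappa\le\log\kappa$, which is the sole place where the condition $\kappa\ge 1$ is used; everything else is a clean specialization of Theorem~\ref{thm:first_linear_rate}.
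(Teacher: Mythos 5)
Your proposal is correct and follows essentially the same route as the paper's own proof: specialize Theorem~\ref{thm:first_linear_rate} by computing $\Psi(\bar B_0)=0$ for $B_0=LI$ and $\Psi(\tfrac{1}{\kappa}I)=d(\tfrac{1}{\kappa}-1+\log\kappa)\le d\log\kappa$ for $B_0=\mu I$, then use $e^{-x}\ge e^{-1}\ge\tfrac13$ for $x\le 1$ to obtain \eqref{eq:first_linear_rate_mu}. No gaps.
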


\vspace{-2mm}
Corollary~\ref{coro:first_linear_rate} shows that when initialized with $B_0 = LI$, BFGS achieves a linear rate of $\mathcal{O}((1 - \frac{1}{\kappa})^{t})$ from the first iteration, matching the rate of gradient descent. It also indicates that initializing with $B_0 = \mu I$ achieves a similar rate but after $d\log \kappa$ iterations. While this suggests a preference for initializing with $B_0 = LI$, subsequent analysis reveals that with enough iterations, BFGS with either initialization can attain a faster linear rate independent of $\kappa$. In some cases, starting with $B_0 = \mu I$ may lead to fewer total iterations to achieve this faster rate. We will explore this trade-off later.

\vspace{-1mm}
\section{Condition Number Independent Linear Convergence Rates}\label{sec:cond_ind_rate}
\vspace{-1mm}

In this section, we improve the previous results and establish a non-asymptotic, condition number-free global linear convergence rate for BFGS with the Armijo-Wolfe line search. This requires the additional assumption that the Hessian is Lipschitz continuous. Our analysis builds on the previous methodology but uses $P = \nabla^2 f(x_*)$ instead of $P = LI$ to prove the condition number-independent global linear rate. Thus, the weighted initial matrix $\tilde{B}_0$ is $\nabla^2 f(x_*)^{-\frac{1}{2}} B_0 \nabla^2 f(x_*)^{-\frac{1}{2}}$. Next, we present a general global convergence bound for any initial Hessian approximation $B_0 \in \mathbb{S}^d_{++}$.

\begin{proposition}\label{prop:second_linear_rate}
Suppose Assumptions~\ref{ass_str_cvx}, \ref{ass_smooth} and \ref{ass_Hess_lip} hold. Let $\{x_t\}_{t\geq 0}$ be the iterates generated by  BFGS with the step size satisfying the Armijo-Wolfe conditions in \eqref{sufficient_decrease} and \eqref{curvature_condition}. Recall the definition of $C_t$ in \eqref{distance} and $\Psi(\cdot)$ in \eqref{potential_function}. For any initial point $x_0 \in \mathbb{R}^{d}$ and any initial Hessian approximation matrix $B_0 \in \mathbb{S}^d_{++}$, the following result holds:
\vspace{-1mm}
\begin{equation*}
    \frac{f(x_t) - f(x_*)}{f(x_0) - f(x_*)} \leq \left(1 - 2\alpha(1 - \beta)e^{- \frac{\Psi(\Tilde{B}_{0}) + 3\sum_{i = 0}^{t - 1}C_i}{t}} \right)^{t}, \quad \forall t \geq 1. 
\end{equation*}
\end{proposition}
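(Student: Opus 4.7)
The plan is to instantiate the master inequality of Proposition~\ref{lemma_bound} with weight matrix $P = \nabla^2 f(x_*)$, so that $\hat{B}_0 = \tilde{B}_0$, and then to lower bound the four factors $\prod_{i=0}^{t-1}\hat{p}_i$, $\prod_{i=0}^{t-1}\hat{n}_i$, $\prod_{i=0}^{t-1}\hat{q}_i$ and $\prod_{i=0}^{t-1}\frac{\cos^2(\hat{\theta}_i)}{\hat{m}_i}$ separately. Because $\hat{g}_i^\top \hat{s}_i = g_i^\top s_i$ and $\hat{y}_i^\top \hat{s}_i = y_i^\top s_i$ by \eqref{weighted_vector}, the ratios $\hat{p}_i$ and $\hat{n}_i$ are invariant under the weighting, and Lemma~\ref{lemma_line_search} yields $\prod_{i=0}^{t-1}\hat{p}_i \hat{n}_i \geq (\alpha(1-\beta))^t$ at once.

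The remaining two products are where Assumption~\ref{ass_Hess_lip} and the sequence $\{C_i\}$ enter. Since $P = \nabla^2 f(x_*) \succeq \mu I$, Lipschitz continuity of the Hessian together with the definition of $C_i$ in \eqref{distance} and the strong-convexity bound $\|x_i - x_*\| \leq \sqrt{2(f(x_i)-f(x_*))/\mu}$ gives
\[
    \|\hat{J}_i - I\| \leq C_i, \qquad \|P^{-1/2} G_i P^{-1/2} - I\| \leq C_i,
\]
after using monotonicity of $\{f(x_i)\}$ from Lemma~\ref{lemma_line_search} to absorb $\max(\|x_i-x_*\|,\|x_{i+1}-x_*\|)$ into $C_i$. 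For $\hat{q}_i = \|\hat{g}_i\|^2/(f(x_i)-f(x_*))$, I would then write $g_i = G_i(x_i-x_*)$ and $f(x_i)-f(x_*) = \frac{1}{2}(x_i-x_*)^\top \widetilde{G}_i (x_i-x_*)$ for the analogous averaged Hessian arising from Taylor's theorem, and use the two norm bounds above to conclude $\hat{q}_i \geq 2/(1+c_1 C_i) \geq 2 e^{-c_1 C_i}$ for a small absolute constant $c_1$.

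For the BFGS-specific factor $\prod_{i=0}^{t-1}\frac{\cos^2(\hat{\theta}_i)}{\hat{m}_i}$ I would run the classical potential-function argument on the weighted update \eqref{BFGS_weighted}: each step obeys a telescoping identity relating $\Psi(\hat{B}_{i+1})$ to $\Psi(\hat{B}_i)$, a nonnegative Bregman-type term, and $-\log\frac{\cos^2(\hat{\theta}_i)}{\hat{m}_i}$, plus a Lipschitz correction controlled by $\|\hat{J}_i - I\| \leq C_i$ via the relation $\hat{y}_i = \hat{J}_i \hat{s}_i$ (the unweighted template is Proposition~\ref{lemma_BFGS} from the appendix). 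Telescoping from $0$ to $t-1$ and discarding the nonnegative remainder $\Psi(\hat{B}_t)$ yields $\prod_{i=0}^{t-1}\frac{\cos^2(\hat{\theta}_i)}{\hat{m}_i} \geq \exp\bigl(-\Psi(\tilde{B}_0) - c_2 \sum_{i=0}^{t-1} C_i\bigr)$. Multiplying all four bounds, substituting into \eqref{eq:product}, and arranging the bookkeeping so that $c_1 + c_2 \leq 3$ produces exactly the claimed factor $2\alpha(1-\beta) e^{-(\Psi(\tilde{B}_0) + 3\sum C_i)/t}$ inside the $t$-th power.

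The main obstacle will be the careful tracking of the Lipschitz correction in the weighted BFGS potential identity, in order to conclude that the sum of all error terms is bounded by $c_2 \sum C_i$ with a clean absolute constant. Unlike the standard local analysis where the update is effectively driven by $\nabla^2 f(x_*)$, here it is driven by the averaged Hessian $J_i$ along $[x_i, x_{i+1}]$, so one must simultaneously absorb the perturbation of $\hat{y}_i$ away from $\hat{s}_i$ and the resulting perturbation of $\hat{m}_i$ away from $1$ into multiplicative $e^{O(C_i)}$ factors, while keeping the per-step contribution linear (rather than quadratic) in $C_i$. Monotonicity of $\{C_i\}$ via the Armijo condition in Lemma~\ref{lemma_line_search} will be used to keep these corrections summable.
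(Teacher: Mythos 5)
Your proposal follows essentially the same route as the paper's proof: it instantiates Proposition~\ref{lemma_bound} with $P=\nabla^2 f(x_*)$, bounds $\prod\hat{p}_i\hat{n}_i$ by $(\alpha(1-\beta))^t$ via Lemma~\ref{lemma_line_search}, bounds $\hat{q}_i\geq 2e^{-2C_i}$ and $\prod\frac{\cos^2(\hat\theta_i)}{\hat m_i}\geq e^{-\Psi(\tilde B_0)-\sum C_i}$ via the weighted potential-function telescoping with $\frac{\|\hat y_i\|^2}{\hat y_i^\top\hat s_i}\leq 1+C_i$, and multiplies the factors to get the exponent $-\Psi(\tilde B_0)-3\sum C_i$ (the paper simply imports these intermediate bounds as Lemmas~\ref{lemma_eigenvalue}, \ref{lem:y^2/sy} and Proposition~\ref{lemma_BFGS} from \cite{jin2024non} rather than re-deriving them). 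The bookkeeping you leave symbolic works out exactly as you anticipate, with $c_1=2$ and $c_2=1$.
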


Proposition~\ref{prop:second_linear_rate} demonstrates that the convergence rate of BFGS with the Armijo-Wolfe line search is influenced by $\Psi(\Tilde{B}_0)$ and the sum $\sum_{i = 0}^{t - 1}C_i$. The first term $\Psi(\Tilde{B}_0)$ is a constant that depends on our choice of the initial Hessian approximation matrix $B_0$. The second term $\sum_{i = 0}^{t - 1}C_i$ can also be upper bounded using the non-asymptotic global linear convergence rate provided in Theorem~\ref{thm:first_linear_rate}. 

\begin{theorem}\label{thm:second_linear_rate}
Suppose Assumptions~\ref{ass_str_cvx}, \ref{ass_smooth} and \ref{ass_Hess_lip} hold, and let $\{x_t\}_{t\geq 0}$ be the iterates generated by BFGS with the Armijo-Wolfe line search in \eqref{sufficient_decrease} and \eqref{curvature_condition}. Then, for any initial point $x_0 \in \mathbb{R}^{d}$ and any initial Hessian approximation  $B_0 \in \mathbb{S}^d_{++}$, if $ t \geq \Psi(\Tilde{B}_{0}) + 3C_0\Psi(\bar{B}_0) + \frac{9}{\alpha(1 - \beta)}C_0 \kappa$, we have
\vspace{-1mm}
\begin{equation}\label{eq:second_linear_rate}
    \frac{f(x_t) - f(x_*)}{f(x_0) - f(x_*)} \leq \left(1 - \frac{2\alpha(1 - \beta)}{3} \right)^{t}.
\end{equation}
\end{theorem}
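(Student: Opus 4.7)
The plan is to combine Proposition~\ref{prop:second_linear_rate} with the global linear rate from Theorem~\ref{thm:first_linear_rate} in order to control the sum $\sum_{i=0}^{t-1} C_i$ that appears in the exponent of Proposition~\ref{prop:second_linear_rate}. Since that proposition gives
\[
\frac{f(x_t)-f(x_*)}{f(x_0)-f(x_*)} \le \left(1 - 2\alpha(1-\beta)\exp\!\left(-\tfrac{\Psi(\tilde B_0)+3\sum_{i=0}^{t-1}C_i}{t}\right)\right)^{\!t},
\]
to recover the desired rate $(1-\tfrac{2\alpha(1-\beta)}{3})^t$ it suffices to ensure that the inner exponential is at least $1/3$, i.e.\ that $\Psi(\tilde B_0)+3\sum_{i=0}^{t-1}C_i \le t\ln 3$. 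Since $\ln 3 > 1$, a clean sufficient condition is $\Psi(\tilde B_0)+3\sum_{i=0}^{t-1}C_i \le t$, and this is exactly what I would derive from the hypothesis on $t$ stated in the theorem.

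To bound $\sum_{i=0}^{t-1} C_i$, I would invoke Theorem~\ref{thm:first_linear_rate} at each iteration $i$, which yields $f(x_i)-f(x_*) \le (f(x_0)-f(x_*))(1-\rho_i)^i$ with $\rho_i := e^{-\Psi(\bar B_0)/i}\,\tfrac{2\alpha(1-\beta)}{\kappa}$. By the definition of $C_i$ in \eqref{distance} this translates to $C_i \le C_0(1-\rho_i)^{i/2} \le C_0 \exp(-\rho_i i/2)$. I would then split the sum at $i^\star := \lceil \Psi(\bar B_0)\rceil$. For $i < i^\star$ I would use only the trivial bound $C_i \le C_0$ (valid because $\{f(x_i)\}$ is non-increasing by Lemma~\ref{lemma_line_search}), contributing at most a multiple of $C_0 \Psi(\bar B_0)$. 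For $i \ge i^\star$, the inequality $e^{-\Psi(\bar B_0)/i} \ge 1/e$ gives $\rho_i i \ge \tfrac{2\alpha(1-\beta) i}{e\kappa}$, so $C_i$ decays geometrically and the tail sum can be controlled by
\[
\sum_{i \ge i^\star} C_i \;\le\; \frac{C_0}{1 - \exp(-\alpha(1-\beta)/(e\kappa))} \;\le\; \frac{c\, C_0 \kappa}{\alpha(1-\beta)},
\]
for an absolute constant $c$, using an elementary estimate of the form $1-e^{-x} \ge x\, e^{-x}$ valid for $x \in (0,1]$.

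Combining the two regimes yields $\sum_{i=0}^{t-1} C_i \le C_0 \Psi(\bar B_0) + \tfrac{c\,C_0\kappa}{\alpha(1-\beta)}$; multiplying by $3$ and adding $\Psi(\tilde B_0)$, the sufficient condition $\Psi(\tilde B_0) + 3\sum_{i=0}^{t-1} C_i \le t$ reduces to a lower bound on $t$ exactly of the form $\Psi(\tilde B_0) + 3 C_0 \Psi(\bar B_0) + \tfrac{9 C_0 \kappa}{\alpha(1-\beta)}$ stated in the theorem. The conclusion $(1-\tfrac{2\alpha(1-\beta)}{3})^t$ then follows immediately from Proposition~\ref{prop:second_linear_rate}.

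I expect the main obstacle to be tightening the absolute constants in the geometric tail bound so that the coefficient of $C_0\kappa/(\alpha(1-\beta))$ comes out to exactly $9$; this will require exploiting the $\ln 3 > 1$ slack and being careful with the rounding at $i^\star$. The conceptual structure—bootstrap the first linear rate from Theorem~\ref{thm:first_linear_rate} to control the aggregate proximity measure $\sum C_i$, and then substitute into Proposition~\ref{prop:second_linear_rate}—is what drives the argument, and no new technical ingredient beyond those already developed in Sections~\ref{sec:convergence} and~\ref{sec:linear} is needed.
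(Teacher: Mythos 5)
Your proposal is correct and follows essentially the same route as the paper's proof: reduce to the sufficient condition $\Psi(\tilde B_0)+3\sum_{i=0}^{t-1}C_i\le t$ (the paper uses $e^{-1}\ge 1/3$, equivalent to your $\ln 3>1$), then bootstrap Theorem~\ref{thm:first_linear_rate} to bound $\sum_i C_i$ by splitting at $\lceil\Psi(\bar B_0)\rceil$ with the monotone bound $C_i\le C_0$ before the split and geometric decay after. The one detail to fix is the tail estimate: the relaxation $C_i\le C_0 e^{-\rho_i i/2}$ followed by $1-e^{-x}\ge xe^{-x}$ yields a constant near $e^2$ rather than $3$, which the $\ln 3$ slack cannot absorb; the paper instead sums $(1-\rho)^{i/2}$ exactly as a geometric series with ratio $\sqrt{1-\rho}$, using $\sum_{i\ge1}(1-\rho)^{i/2}\le \frac{2}{\rho}-1$ with $\rho=\frac{2\alpha(1-\beta)}{3\kappa}$, which gives precisely $\frac{3\kappa}{\alpha(1-\beta)}-1$ and hence the stated coefficient $9$.
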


This result shows that when the number of iterations meets $t \geq \Psi(\Tilde{B}_{0}) + 3C_0\Psi(\bar{B}_0) + \frac{9}{\alpha(1 - \beta)}C_0 \kappa$, BFGS with Armijo-Wolfe conditions achieves a condition number-independent linear rate. The choice of $B_0$ is critical as it influences the required iterations through $\Tilde{B}_{0} = \nabla^2 f(x_*)^{-\frac{1}{2}} B_0 \nabla^2 f(x_*)^{-\frac{1}{2}}$ and $\bar{B}_0 = \frac{1}{L}B_0$. Different choices of $B_0$ affect $\Psi(\Tilde{B}_{0}) + 3C_0\Psi(\bar{B}_0)$ and thus the number of iterations needed for condition-free linear convergence. While optimizing $B_0$ to minimize $\Psi(\Tilde{B}_{0}) + 3C_0\Psi(\bar{B}_0)$ is possible, we focus on two practical initialization schemes: $B_0 = LI$ and $B_0 = \mu I$.

\begin{corollary}\label{coro:second_linear_rate}
Suppose that Assumptions~\ref{ass_str_cvx}, \ref{ass_smooth} and \ref{ass_Hess_lip} hold. Let $\{x_t\}_{t\geq 0}$ be the iterates generated by the BFGS method, where the step size satisfies the Armijo-Wolfe conditions in \eqref{sufficient_decrease} and \eqref{curvature_condition}, and  $x_0 \in \mathbb{R}^{d}$ as an arbitrary initial point. Then, given the result in Theorem~\ref{thm:second_linear_rate}, we have
\begin{itemize}
\vspace{-2mm}
\item If we set $B_0 = LI$, the rate in \eqref{eq:second_linear_rate} holds for $t \geq d\kappa + \frac{9}{\alpha(1 - \beta)}C_0 \kappa$,
\vspace{-2mm}
\item If we set $B_0 = \mu I$, the rate in \eqref{eq:second_linear_rate} holds for $t \geq (1 + 3C_0)d\log{\kappa} + \frac{9}{\alpha(1 - \beta)}C_0 \kappa$.
\end{itemize}
\end{corollary}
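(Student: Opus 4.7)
The plan is to apply Theorem~\ref{thm:second_linear_rate} directly, reducing everything to computing explicit upper bounds on $\Psi(\tilde{B}_0) + 3C_0\Psi(\bar{B}_0)$ for the two specified initializations. Recall that $\bar{B}_0 = \frac{1}{L}B_0$, $\tilde{B}_0 = \nabla^2 f(x_*)^{-1/2} B_0 \nabla^2 f(x_*)^{-1/2}$, and $\Psi(A) = \Tr(A) - d - \log \Det(A)$. The only structural input beyond algebra is the eigenvalue sandwich $\mu I \preceq \nabla^2 f(x_*) \preceq L I$ implied by Assumptions~\ref{ass_str_cvx} and \ref{ass_smooth}, which in turn yields $\frac{1}{L}I \preceq \nabla^2 f(x_*)^{-1} \preceq \frac{1}{\mu}I$.

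\textbf{Case $B_0 = L I$.} First I would observe that $\bar{B}_0 = I$, hence $\Psi(\bar{B}_0) = 0$ immediately, so the $3C_0 \Psi(\bar{B}_0)$ term drops out. Next, $\tilde{B}_0 = L \nabla^2 f(x_*)^{-1}$, so using the eigenvalue bounds the trace satisfies $\Tr(\tilde{B}_0) = L \Tr(\nabla^2 f(x_*)^{-1}) \leq L \cdot \frac{d}{\mu} = d\kappa$, while the log-determinant term is $-\log\Det(\tilde{B}_0) = -d\log L + \log\Det(\nabla^2 f(x_*)) \leq -d\log L + d \log L = 0$ since each eigenvalue of $\nabla^2 f(x_*)$ is at most $L$. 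Combining, $\Psi(\tilde{B}_0) \leq d\kappa - d \leq d\kappa$. Plugging into the hypothesis of Theorem~\ref{thm:second_linear_rate} yields the stated threshold $t \geq d\kappa + \frac{9}{\alpha(1-\beta)}C_0\kappa$.

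\textbf{Case $B_0 = \mu I$.} Here $\bar{B}_0 = \frac{1}{\kappa} I$, so $\Psi(\bar{B}_0) = \frac{d}{\kappa} - d - d\log(\frac{1}{\kappa}) = \frac{d}{\kappa} - d + d\log\kappa \leq d\log\kappa$, since $\frac{d}{\kappa} - d \leq 0$ for $\kappa \geq 1$. For $\tilde{B}_0 = \mu \nabla^2 f(x_*)^{-1}$, the trace bound $\mu \Tr(\nabla^2 f(x_*)^{-1}) \leq \mu \cdot \frac{d}{\mu} = d$ gives $\Tr(\tilde{B}_0) - d \leq 0$, while $-\log \Det(\tilde{B}_0) = -d\log \mu + \log \Det(\nabla^2 f(x_*)) \leq -d\log\mu + d\log L = d \log \kappa$. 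Therefore $\Psi(\tilde{B}_0) \leq d\log \kappa$, and the combined bound $\Psi(\tilde{B}_0) + 3C_0 \Psi(\bar{B}_0) \leq (1 + 3C_0) d\log\kappa$ plugged into Theorem~\ref{thm:second_linear_rate} delivers the threshold $t \geq (1+3C_0) d\log\kappa + \frac{9}{\alpha(1-\beta)}C_0\kappa$.

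No genuine obstacle is anticipated; everything reduces to the trace and log-determinant inequalities above together with the already-proved Theorem~\ref{thm:second_linear_rate}. The only point worth double-checking carefully is that the log-determinant contribution in the $B_0 = LI$ case is genuinely non-positive (so that $d\kappa$ is not inflated by an additional $d\log\kappa$), which follows cleanly from $\nabla^2 f(x_*) \preceq L I$. This confirms the qualitative contrast highlighted in Table~\ref{tab:comparison}: initializing with $B_0 = LI$ leads to a threshold scaling as $d\kappa + C_0 \kappa$, while $B_0 = \mu I$ yields the milder dimensional factor $(1+C_0)d\log\kappa + C_0 \kappa$, trading off a worse early-phase rate (Corollary~\ref{coro:first_linear_rate}) for a smaller entry moment into the condition-number-free linear phase.
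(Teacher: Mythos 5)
Your proposal is correct and follows essentially the same route as the paper: both apply Theorem~\ref{thm:second_linear_rate} and bound $\Psi(\bar{B}_0)$ and $\Psi(\tilde{B}_0)$ via the eigenvalue sandwich $\mu I \preceq \nabla^2 f(x_*) \preceq LI$, arriving at $\Psi(\tilde{B}_0) \leq d\kappa$ (resp.\ $d\log\kappa$) for $B_0 = LI$ (resp.\ $B_0 = \mu I$). Your separate treatment of the trace and log-determinant terms is just an unpacked version of the paper's operator-inequality argument $I \preceq \tilde{B}_0 \preceq \kappa I$ (resp.\ $\frac{1}{\kappa}I \preceq \tilde{B}_0 \preceq I$), so there is nothing substantive to distinguish the two.
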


\vspace{-2mm}
Based on Corollary~\ref{coro:second_linear_rate}, if $C_0 \ll \kappa$, or equivalently $f(x_0)-f(x_*) \ll \frac{L^2 \mu}{M^2}  $, then BFGS with $B_0 = \mu I$ requires less iterations to achieve the condition number-independent linear convergence rate. 

\section{Global Superlinear Convergence Rates}\label{sec:superlinear}

\vspace{-1mm}

In this section, we present our global superlinear result. Consider the definition $\tilde{B}_0 = \nabla^2 f(x_*)^{-\frac{1}{2}} B_0 \nabla^2 f(x_*)^{-\frac{1}{2}}$ as well as the definition of $\rho_t$ which is given by
\begin{equation}\label{definition_rho}
    \rho_t := \frac{-g_t^\top d_t}{\|\tilde{d}_t\|^2}, \qquad 
    \tilde{d}_t := \nabla^2{f(x_*)}^{\frac{1}{2}}d_t, \qquad \forall t \geq 0.
\end{equation}
To motivate, let us briefly discuss why we are only able to show a linear convergence rate instead of a superlinear rate in Theorem~\ref{thm:second_linear_rate}. By inspecting the proof, we observe that the bottleneck is due to the lower bounds on $\hat{p}_t$ and $\hat{n}_t$: we used $\hat{p}_t \geq \alpha$ and $\hat{n}_t\geq 1-\beta$ from Lemma~\ref{lemma_line_search}, which leads to the constant factor $\alpha(1-\beta)$ in the final linear rate in Theorem~\ref{thm:second_linear_rate}. Thus, to show a superlinear convergence rate, we need to establish tighter lower bounds for $\hat{p}_t$ and $\hat{n}_t$. In the following lemma, we show that if the step size $\eta_t = 1$, we are able to establish such tighter lower bounds.

\begin{lemma}\label{lem:lower_bound}
    Recall $\hat{p}_t = \frac{f(x_t) - f(x_{t+1})}{-\hat{g}_t^\top \hat{s}_t}$ and $\hat{n}_t= \frac{\hat{y}_t^\top \hat{s}_t}{-\hat{g}_t^\top \hat{s}_t}$ defined in \eqref{eq:def_alpha_q_m}. If the unit step size $\eta_t = 1$ satisfies the Armijo-Wolfe conditions \eqref{sufficient_decrease} and \eqref{curvature_condition}, then we have 
    \begin{equation}\label{eq:lower_bound}
        \hat{p}_t \geq 1 - \frac{1 + C_t}{2\rho_t}, \qquad \hat{n}_t\geq \frac{1}{(1 + C_t)\rho_t}.
    \end{equation}
\end{lemma}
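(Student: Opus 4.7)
The plan is as follows. Since $\eta_t=1$ by hypothesis, we have $s_t=d_t$, so with the weight matrix $P=\nabla^2 f(x_*)$ in force we get $\hat s_t = \nabla^2 f(x_*)^{1/2} d_t = \tilde d_t$ and $-\hat g_t^\top \hat s_t = -g_t^\top d_t$, which is positive because $d_t=-B_t^{-1}g_t$ and $B_t\succ 0$. All the ratios to be bounded will therefore be rewritten in the original (unweighted) vectors $g_t,d_t$, and the ratio $\|\tilde d_t\|^2/(-g_t^\top d_t)=1/\rho_t$ will be what connects them to the definition of $\rho_t$ in \eqref{definition_rho}.

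The main tool is a pair of matrix bounds on $\nabla^2 f(y)$ for any $y$ on the segment $[x_t,x_{t+1}]$. Using Lemma~\ref{lemma_line_search} (monotone decrease of $f$) together with strong convexity, one gets $\|y-x_*\|\le\sqrt{2(f(x_t)-f(x_*))/\mu}$, hence $M\|y-x_*\|/\mu\le C_t$. From Assumption~\ref{ass_Hess_lip} we have $-M\|y-x_*\|\,I\preceq \nabla^2 f(y)-\nabla^2 f(x_*)\preceq M\|y-x_*\|\,I$. Bounding $I\preceq \mu^{-1}\nabla^2 f(x_*)$ on the upper side and, crucially, $I\preceq \mu^{-1}\nabla^2 f(y)$ (which uses that $f$ itself is $\mu$-strongly convex so $\nabla^2 f(y)\succeq \mu I$) on the lower side yields
\begin{equation*}
\tfrac{1}{1+C_t}\,\nabla^2 f(x_*)\;\preceq\;\nabla^2 f(y)\;\preceq\;(1+C_t)\,\nabla^2 f(x_*).
\end{equation*}
Averaging over $\sigma\in[0,1]$, the same inequalities hold for $J_t=\int_0^1\nabla^2 f(x_t+\sigma d_t)\,d\sigma$.

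With these in hand, both bounds follow cleanly. For $\hat n_t$, the fundamental theorem of calculus gives $y_t=J_ts_t=J_td_t$, so $y_t^\top d_t=d_t^\top J_t d_t\ge \frac{1}{1+C_t}\|\tilde d_t\|^2$; dividing by $-g_t^\top d_t$ and invoking $\|\tilde d_t\|^2/(-g_t^\top d_t)=1/\rho_t$ produces $\hat n_t\ge \frac{1}{(1+C_t)\rho_t}$. For $\hat p_t$, Taylor's theorem with integral remainder yields
\begin{equation*}
f(x_{t+1})=f(x_t)+g_t^\top d_t+\int_0^1(1-\sigma)\,d_t^\top\nabla^2 f(x_t+\sigma d_t)\,d_t\,d\sigma,
\end{equation*}
and the upper Hessian bound turns the integral into $\le\frac{1+C_t}{2}\|\tilde d_t\|^2$. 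Rearranging and dividing by $-g_t^\top d_t>0$ produces $\hat p_t\ge 1-\frac{1+C_t}{2\rho_t}$.

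The only non-routine step is the lower matrix bound $\nabla^2 f(y)\succeq \frac{1}{1+C_t}\nabla^2 f(x_*)$. The naive computation that dominates the slack $M\|y-x_*\|I$ by $C_t\nabla^2 f(x_*)$ would only give $\nabla^2 f(y)\succeq (1-C_t)\nabla^2 f(x_*)$, and since $1-C_t\le \frac{1}{1+C_t}$ this is strictly weaker and cannot yield the claimed $\hat n_t$ bound. The key trick is to dominate that slack by $C_t\nabla^2 f(y)$ instead, which is legitimate precisely because $f$ (not only its value at $x_*$) is $\mu$-strongly convex. This one-line switch is the heart of the lemma; the remainder is straightforward Taylor expansion and rearrangement.
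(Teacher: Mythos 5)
Your proposal is correct and follows essentially the same route as the paper: both bound $\hat{n}_t$ via $y_t = J_t d_t$ together with the two-sided comparison $\frac{1}{1+C_t}\nabla^2 f(x_*) \preceq J_t \preceq (1+C_t)\nabla^2 f(x_*)$ (the paper cites Lemma~\ref{lemma_Hessian}, which is proved by exactly the slack-domination trick you describe), and both bound $\hat{p}_t$ via a second-order Taylor expansion plus the upper Hessian comparison. The only cosmetic difference is that you use the integral-remainder form of Taylor's theorem where the paper uses the Lagrange form.
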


\vspace{-1mm}
In contrast to the constant lower bounds in Lemma~\ref{lemma_line_search}, the lower bounds in \eqref{eq:lower_bound} depend on $C_t$ and $\rho_t$. Later, we show $C_t \rightarrow 0$ and $\rho_t \rightarrow 1$. Hence, the lower bounds in \eqref{eq:lower_bound} approach 1 as the number of iterations increases, enabling us to prove a superlinear rate. That said, the lower bounds in Lemma~\ref{lem:lower_bound} hold only when $\eta_t=1$. To complete the picture, we need to quantify when and how often the unit step size is selected during BFGS execution. This is addressed in the next lemmas. 

\begin{lemma}\label{lem:unit_step_size}
    Suppose Assumptions~\ref{ass_str_cvx}, \ref{ass_smooth}, and \ref{ass_Hess_lip} hold and define the constants
    \begin{equation}\label{definition_delta_1_2_3}
      \!\!  \delta_1\! :=\! \min\left\{\frac{1}{6}, \sqrt{2(1 - \alpha)}-\! 1, \frac{1}{\sqrt{1 - \beta}} -\! 1\right\}\!, \  \delta_2 := \max\{\frac{7}{8}, \frac{1}{\sqrt{2(1 - \alpha)}}\}\!,  \ \delta_3 := \frac{1}{\sqrt{1 - \beta}},
    \end{equation}
    which satisfy $0 < \delta_1 < \delta_2 < 1< \delta_3$.  If $C_t \leq \delta_1$ and $\delta_2 \leq \rho_t \leq \delta_3$, then $\eta_t = 1$ satisfies the Armijo-Wolfe conditions \eqref{sufficient_decrease} and \eqref{curvature_condition}.
\end{lemma}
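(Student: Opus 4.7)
The plan is to verify the Armijo and curvature conditions separately under the unit step $\eta_t = 1$, reducing each to an algebraic inequality in $\rho_t, C_t, \alpha, \beta$, and then observing that the choice of $\delta_1, \delta_2, \delta_3$ is exactly tuned to make those inequalities hold. The starting point is the exact Taylor identities
\[
f(x_{t+1}) - f(x_t) = g_t^\top d_t + \int_0^1 (1-s)\, d_t^\top \nabla^2 f(x_t + sd_t)\, d_t\, ds,
\]
\[
\nabla f(x_{t+1})^\top d_t = g_t^\top d_t + \int_0^1 d_t^\top \nabla^2 f(x_t + sd_t)\, d_t\, ds,
\]
so both conditions reduce to comparing the integrated quadratic form $\int d_t^\top \nabla^2 f(\cdot)\, d_t$ against $\|\tilde{d}_t\|^2$ and thereby against $-g_t^\top d_t = \rho_t \|\tilde{d}_t\|^2$.

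The next step is a two-sided Hessian comparison in the $\nabla^2 f(x_*)$-weighted norm $\|v\|_* := \|\nabla^2 f(x_*)^{1/2} v\|$. Assumption~\ref{ass_Hess_lip} together with $\nabla^2 f(x_*) \succeq \mu I$ gives
\[
\Bigl(1 - \tfrac{M}{\mu^{3/2}} \|x - x_*\|_*\Bigr) \nabla^2 f(x_*) \preceq \nabla^2 f(x) \preceq \Bigl(1 + \tfrac{M}{\mu^{3/2}} \|x - x_*\|_*\Bigr) \nabla^2 f(x_*).
\]
Applying this along $x = x_t + s d_t$ and using $\|x_t + s d_t - x_*\|_* \leq \|\tilde{u}_t\| + s \|\tilde{d}_t\|$ with $\tilde{u}_t := \nabla^2 f(x_*)^{1/2}(x_t - x_*)$, both integrals are sandwiched by $(1 \pm r)\|\tilde{d}_t\|^2$ for explicit errors $r$ that are affine in $\tfrac{M}{\mu^{3/2}} \|\tilde{u}_t\|$ and $\tfrac{M}{\mu^{3/2}} \|\tilde{d}_t\|$.

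The technical heart is showing that both $\tfrac{M}{\mu^{3/2}} \|\tilde{u}_t\|$ and $\tfrac{M}{\mu^{3/2}} \|\tilde{d}_t\|$ are bounded by a universal multiple of $C_t$. For $\|\tilde{u}_t\|$ I would apply the Hessian comparison on $[x_*, x_t]$ to obtain $f(x_t) - f(x_*) \geq \tfrac{1}{2}\|\tilde{u}_t\|^2\bigl(1 - \tfrac{M}{3\mu^{3/2}}\|\tilde{u}_t\|\bigr)$, and combine with the definition $C_t = \tfrac{M}{\mu^{3/2}}\sqrt{2(f(x_t) - f(x_*))}$ to get $\tfrac{M}{\mu^{3/2}}\|\tilde{u}_t\| = O(C_t)$ via a short bootstrap, using $C_t \leq \delta_1 \leq 1/6$. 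For $\|\tilde{d}_t\|$, Cauchy-Schwarz on $\rho_t \|\tilde{d}_t\|^2 = -g_t^\top d_t$ gives $\|\tilde{d}_t\| \leq \|\nabla^2 f(x_*)^{-1/2} g_t\|/\rho_t$; writing $g_t = G_t(x_t - x_*)$ via the average Hessian on $[x_*, x_t]$ and applying the same Hessian comparison to $G_t$ yields $\|\nabla^2 f(x_*)^{-1/2} g_t\| \leq (1 + O(C_t)) \|\tilde{u}_t\|$. Since $\rho_t \geq \delta_2 \geq 7/8$ is an absolute constant, it follows that $\tfrac{M}{\mu^{3/2}}\|\tilde{d}_t\| = O(C_t)$. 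With these bounds, Armijo becomes $\rho_t(1-\alpha) \geq (1 + r_+)/2$, implied by $\rho_t \geq 1/\sqrt{2(1-\alpha)}$ together with $r_+ \leq \sqrt{2(1-\alpha)} - 1$, which is where $\delta_1 \leq \sqrt{2(1-\alpha)}-1$ and $\delta_2 \geq 1/\sqrt{2(1-\alpha)}$ come from. The curvature condition becomes $1 - r_- \geq (1-\beta)\rho_t$, implied by $\rho_t \leq 1/\sqrt{1-\beta}$ together with $r_- \leq 1 - \sqrt{1-\beta}$; translating through the constant in the $O(C_t)$ bound produces the third entry $1/\sqrt{1-\beta} - 1$ in $\delta_1$ and the value $\delta_3 = 1/\sqrt{1-\beta}$.

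The hard part will be the estimate $\tfrac{M}{\mu^{3/2}}\|\tilde{d}_t\| = O(C_t)$. The naive route via $\|d_t\| \leq \|\tilde{d}_t\|/\sqrt{\mu}$ introduces a factor of $\sqrt{\kappa}$ and ruins any condition-number-free threshold. The resolution is to stay entirely inside the $\nabla^2 f(x_*)$-weighted geometry so that the Newton-like chain $\|\tilde{d}_t\| \lesssim \|\nabla^2 f(x_*)^{-1/2} g_t\| \lesssim \|\tilde{u}_t\|$ never leaves that geometry; only the final link from $\|\tilde{u}_t\|$ to $C_t$ uses strong convexity, and it also happens in the weighted norm and is therefore condition-number-free.
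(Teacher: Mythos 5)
Your high-level plan --- Taylor-expand both conditions at the unit step, sandwich the relevant Hessian quadratic forms by $(1\pm r)\,d_t^\top\nabla^2 f(x_*)d_t$ with $r$ controlled by $C_t$, and then check the algebra $\rho_t \ge \tfrac{1+r}{2(1-\alpha)}$ for Armijo and $(1-\beta)(1+r)\rho_t\le 1$ for curvature --- is exactly the paper's, and your derivation of the entries $\sqrt{2(1-\alpha)}-1$, $1/\sqrt{2(1-\alpha)}$, and $1/\sqrt{1-\beta}$ matches the paper's final computation. Where you diverge is in how $r$ is obtained. The paper writes $x_t+sd_t$ as a convex combination of $x_t$ and $x_t+d_t$, bounds both endpoint distances by $\sqrt{2(f(x_t)-f(x_*))/\mu}$ using \emph{unweighted} strong convexity, and obtains the comparisons \eqref{eq:middle_hessian_vs_H*} and \eqref{eq:J_t_vs_H*} with multiplier exactly $1+C_t$ (resp.\ $\tfrac{1}{1+C_t}$). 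That argument needs $f(x_t+d_t)\le f(x_t)$ \emph{before} the Armijo condition is established, and the paper breaks this circularity with a separate descent lemma (Lemma~\ref{lemma_decrease}), valid under $C_t\le 1/6$ and $\rho_t\ge 7/8$ --- which is precisely where the first entries of $\delta_1$ and $\delta_2$ come from. Your proposal never accounts for these two constants; on the other hand, your chain $\|\tilde d_t\|\le\|\tilde g_t\|/\rho_t\le(1+O(C_t))\|\tilde u_t\|/\rho_t$ uses only quantities evaluated at $x_t$, so it genuinely sidesteps the circularity without any auxiliary descent lemma. That is a legitimate structural simplification.

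The gap is quantitative but real. Your bound $\|x_t+sd_t-x_*\|_*\le\|\tilde u_t\|+s\|\tilde d_t\|$, combined with $\|\tilde d_t\|\le(1+O(C_t))\|\tilde u_t\|/\rho_t$ and the bootstrap $\tfrac{M}{\mu^{3/2}}\|\tilde u_t\|\le C_t/\sqrt{1-O(C_t)}$, yields $r\le c\,C_t$ with $c$ at least $1+1/\rho_t>2$ (since $\rho_t$ may be as small as $\delta_2<1$), not $r\le C_t$. The final inequalities then force $C_t\le\tfrac{1}{c}\bigl(\sqrt{2(1-\alpha)}-1\bigr)$ and $C_t\le\tfrac{1}{c}\bigl(\tfrac{1}{\sqrt{1-\beta}}-1\bigr)$, i.e., a strictly smaller threshold than the $\delta_1$ in the statement. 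So as written you prove a weakened version of the lemma (which would still suffice downstream after renaming constants, but does not establish the stated thresholds). To recover the constants exactly you must replace the triangle inequality along the segment by the convex-combination bound $\|x_t+sd_t-x_*\|\le\max\{\|x_t-x_*\|,\|x_t+d_t-x_*\|\}$ together with the unweighted estimate $M\|x_t-x_*\|\le\mu C_t$, so that $M\|x_t-x_*\|\,I\preceq C_t\nabla^2 f(x_*)$ with multiplier exactly one --- at which point you need $f(x_t+d_t)\le f(x_t)$ as a prerequisite and hence something like Lemma~\ref{lemma_decrease}, which is what reintroduces the $1/6$ and $7/8$.
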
 

\vspace{-1mm}
Lemma~\ref{lem:unit_step_size} shows that when $C_t\leq\delta_1$ and $\rho_t$ falls within the interval $[\delta_2, \delta_3]$, the step size $\eta_t=1$ is admissible and meets the Armijo-Wolfe conditions. Note that by the linear convergence result in Theorem~\ref{thm:first_linear_rate}, the first condition on $C_t$ will be satisfied when $t$ is sufficiently large. Additionally, using Proposition~\ref{lemma_BFGS_superlinear} in the Appendix, we can show that the second condition on $\rho_t$ is violated only for a finite number of iterations. These observations are formally presented in the following lemma. 

\begin{lemma}\label{lem:bad_set}
    Suppose Assumptions~\ref{ass_str_cvx}, \ref{ass_smooth}, and \ref{ass_Hess_lip} hold and the iterates $\{x_t\}_{t \geq 0}$ are generated by the BFGS method with step size satisfying the Armijo-Wolfe conditions in \eqref{sufficient_decrease} and \eqref{curvature_condition}. Recall $C_t$ defined in \eqref{distance}, $\Psi(\cdot)$ defined in \eqref{potential_function}, $\{\delta_i\}_{i = 1}^{3}$ defined in \eqref{definition_delta_1_2_3} and $\bar{B}_0 = \frac{1}{L}B_0$. We have $C_t \leq \delta_1$ when
    \begin{equation}\label{definition_t_0}
        t \geq t_0 := \max\left\{\Psi(\bar{B}_0), \frac{3\kappa}{\alpha(1 - \beta)}\log{\frac{C_0}{\delta_1}}\right\}.
    \end{equation}
    Moreover, if we define $\omega(x) = x - \log(1 + x) $, the size of the set $I = \{t:\; \rho_t \notin [\delta_2,\delta_3]\}$ is at most 
    \begin{equation}\label{definition_delta_4}
        |I|  \leq \delta_4\Big(\Psi(\tilde{B}_{0}) + 2C_0 \Psi(\bar{B}_{0})+ \frac{6C_0\kappa}{\alpha(1 - \beta)}\Big), \quad \text{where} \ \delta_4 := \frac{1}{\min\{\omega(\delta_2 - 1), \omega(\delta_3 - 1)\}}.
    \end{equation}
\end{lemma}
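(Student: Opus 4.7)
The plan is to handle the two claims separately, with the first following directly from Theorem~\ref{thm:first_linear_rate} and the second combining a classical BFGS sum identity (which I expect Proposition~\ref{lemma_BFGS_superlinear} to supply) with the geometric summability of $C_t$.

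For the bound on $t_0$, I would start from the definition $C_t = \frac{M}{\mu^{3/2}}\sqrt{2(f(x_t)-f(x_*))}$, which gives $(C_t/C_0)^2 = (f(x_t)-f(x_*))/(f(x_0)-f(x_*))$. Plugging in Theorem~\ref{thm:first_linear_rate}, we get $(C_t/C_0)^2 \leq \bigl(1 - e^{-\Psi(\bar{B}_0)/t}\,\tfrac{2\alpha(1-\beta)}{\kappa}\bigr)^t$. Once $t \geq \Psi(\bar{B}_0)$, the factor $e^{-\Psi(\bar{B}_0)/t}$ is at least $e^{-1} > 1/3$, so $(C_t/C_0)^2 \leq \bigl(1 - \tfrac{2\alpha(1-\beta)}{3\kappa}\bigr)^{t} \leq \exp\!\bigl(-\tfrac{2\alpha(1-\beta)t}{3\kappa}\bigr)$. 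Demanding the right-hand side to be at most $(\delta_1/C_0)^2$ and solving for $t$ produces the second term in $t_0$, and combined with the first term gives $C_t \leq \delta_1$ whenever $t \geq t_0$.

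For the second claim, the strategy is the textbook pigeonhole trick applied to a cumulative estimate on $\omega(\rho_t - 1)$. I will assume Proposition~\ref{lemma_BFGS_superlinear} provides a bound of the form $\sum_{t=0}^{+\infty} \omega(\rho_t - 1) \leq \Psi(\tilde{B}_0) + 2 \sum_{t=0}^{+\infty} C_t$, which is the standard byproduct of tracking $\Psi(\nabla^2 f(x_*)^{-1/2} B_t \nabla^2 f(x_*)^{-1/2})$ through the BFGS update and accounting for the Hessian variation via Assumption~\ref{ass_Hess_lip}. Since $\omega(x) = x - \log(1+x)$ is nonnegative, strictly decreasing on $(-1,0)$ and strictly increasing on $(0,\infty)$, each index $t \in I$ satisfies $\rho_t - 1 < \delta_2 - 1 < 0$ or $\rho_t - 1 > \delta_3 - 1 > 0$ and therefore $\omega(\rho_t - 1) \geq \min\{\omega(\delta_2 - 1), \omega(\delta_3 - 1)\} = 1/\delta_4$. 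Summing only over $t \in I$ yields $|I|/\delta_4 \leq \sum_{t=0}^{+\infty} \omega(\rho_t - 1)$.

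It remains to bound $\sum_{t \geq 0} C_t$, which I would split at $t = \Psi(\bar{B}_0)$. For $t < \Psi(\bar{B}_0)$, the function values are non-increasing by Lemma~\ref{lemma_line_search}, so $C_t \leq C_0$ and the partial sum contributes at most $C_0\Psi(\bar{B}_0)$. For $t \geq \Psi(\bar{B}_0)$, the estimate derived in the first paragraph gives $C_t \leq C_0 \exp\!\bigl(-\tfrac{\alpha(1-\beta)t}{3\kappa}\bigr)$, a geometric tail summable to at most $\tfrac{3\kappa C_0}{\alpha(1-\beta)}$ (up to absorbable constants). Doubling and inserting into the previous display delivers the stated bound $|I| \leq \delta_4\bigl(\Psi(\tilde{B}_0) + 2C_0\Psi(\bar{B}_0) + \tfrac{6C_0 \kappa}{\alpha(1-\beta)}\bigr)$. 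The main obstacle I anticipate is matching the exact constants in Proposition~\ref{lemma_BFGS_superlinear}, in particular confirming the coefficient $2$ in front of $\sum C_t$ there, since everything else reduces to calculus on $\omega$ and a standard geometric-sum argument.
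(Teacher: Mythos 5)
Your proposal matches the paper's proof essentially step for step: the first claim is obtained exactly as you describe from Theorem~\ref{thm:first_linear_rate} with the $e^{-1}\geq 1/3$ simplification once $t\geq\Psi(\bar{B}_0)$, and the second claim uses the same pigeonhole argument on $\omega(\rho_t-1)$ via Proposition~\ref{lemma_BFGS_superlinear} (whose bound is indeed $\sum_{i}\omega(\rho_i-1)\leq\Psi(\tilde{B}_0)+2\sum_i C_i$, confirming the coefficient you were unsure about) together with the split of $\sum_i C_i$ at $\Psi(\bar{B}_0)$ yielding $C_0\Psi(\bar{B}_0)+\frac{3C_0\kappa}{\alpha(1-\beta)}$. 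No gaps; this is the paper's argument.
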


Lemma~\ref{lem:bad_set} implies that conditions $C_t \leq \delta_1$ and $\rho_t \in [\delta_2, \delta_3]$ will be satisfied for all but a finite number of iterations. Thus, if the line search always starts by testing the unit step size (as shown in Section~\ref{sec:complexity}), we will choose $\eta_t = 1$, and accordingly, the tighter lower bound in Lemma~\ref{lem:lower_bound} will apply for all but a finite number of iterations. By applying these lower bounds along with \eqref{eq:product} from Proposition~\ref{lemma_bound}, we can prove a global superlinear convergence rate, as presented next.

\begin{remark}
Lemmas~\ref{lem:unit_step_size} and~\ref{lem:bad_set} are inspired by the analysis in \cite{antonglobal}. Specifically, Lemma 5.10 of \cite{antonglobal} characterized the conditions on $C_t$ and $\rho_t$ under which $\eta=1$ satisfies the Armijo condition~\eqref{sufficient_decrease}, and further bounded the number of iterations where these conditions are violated. However, our Lemma~\ref{lem:unit_step_size} addresses both the Armijo condition in~\eqref{sufficient_decrease} and the curvature condition in~\eqref{curvature_condition}, and the arguments appear simpler. Additionally, our proof for the superlinear convergence rate differs from \cite{antonglobal}. Their approach analyzed the Dennis-Moré ratio and measured ``local'' superlinear convergence using the distance $\|\nabla f(x_*)^{\frac{1}{2}}(x_t-x_*)\|$. In contrast, our ``global'' result is based on the unified framework in Proposition~\ref{lemma_bound} and uses the function value gap as a measure of convergence.
\end{remark}

\begin{theorem}\label{thm:superlinear_rate}
Suppose Assumptions~\ref{ass_str_cvx}, \ref{ass_smooth}, and \ref{ass_Hess_lip} hold and the iterates $\{x_t\}_{t \geq 0}$ are generated by BFGS with step size satisfying the Armijo-Wolfe conditions in \eqref{sufficient_decrease} and \eqref{curvature_condition}. Recall the definition of $C_t$ in \eqref{distance}, $\Psi(\cdot)$ in \eqref{potential_function}, $\bar{B}_0:= \frac{1}{L}B_0$, $\tilde{B}_0 := \nabla^2 f(x_*)^{-\frac{1}{2}} B_0 \nabla^2 f(x_*)^{-\frac{1}{2}}$, and $\delta_1, \delta_2, \delta_3, \delta_4$ in \eqref{definition_delta_1_2_3} and \eqref{definition_delta_4}. Then, for any  $x_0 \in \mathbb{R}^{d}$ and any $B_0 \in \mathbb{S}^d_{++}$, the following global superlinear result holds:
\begin{equation}\label{eq:superlinear_rate}
    \frac{f(x_t) - f(x_*)}{f(x_0) - f(x_*)} \leq \left(\frac{\delta_7 \Psi(\tilde{B}_0) + (\delta_6 + \delta_8 C_0) \Psi(\bar{B}_{0})+ (\frac{3\delta_6}{\alpha(1 - \beta)}\log{\frac{C_0}{\delta_1}} + \frac{3\delta_8}{\alpha(1 - \beta)}C_0)\kappa}{t}\right)^{t},
\end{equation}
where $\{\delta_i\}_{i = 5}^{8}$ defined below are constants that only depend on line search parameters $\alpha$ and $\beta$,
\begin{equation*}
    \delta_5 \!:=\! \frac{\max\{2 + \frac{2}{\delta_2}, 4\delta_3\}}{2\delta_2 - 1 - \delta_1}, \ \delta_6 \!:=\! \log{\frac{1}{2\alpha(1\! -\! \beta)}},\  \delta_7 \!:=\! 1 + \delta_4\delta_6 + \delta_5, \ \delta_8\! :=\! 1+2 \delta_7+ \frac{2\delta_2 \!-\! \delta_1 \!-\! \log{\delta_2}}{2\delta_2 \!- \!1 \!-\! \delta_1}.
\end{equation*}
\end{theorem}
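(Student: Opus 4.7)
My plan is to apply Proposition~\ref{lemma_bound} with the weight matrix $P=\nabla^{2} f(x_*)$, which is the natural choice in the local superlinear regime and makes $\tilde{B}_0$ appear directly in the bound. To obtain a rate of $(h/t)^{t}$ with $h$ as in \eqref{eq:superlinear_rate}, it suffices to establish $\sum_{i=0}^{t-1}\bigl[-\log\bigl(\hat{p}_i\hat{q}_i\hat{n}_i\cos^{2}(\hat{\theta}_i)/\hat{m}_i\bigr)\bigr]\leq h$, since then $\bigl(\prod_{i=0}^{t-1}\hat{p}_i\hat{q}_i\hat{n}_i\cos^{2}(\hat{\theta}_i)/\hat{m}_i\bigr)^{1/t}\geq e^{-h/t}\geq 1-h/t$, and the theorem follows. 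Unlike in the linear-rate proof, constant per-iteration losses from the coarse Armijo--Wolfe bounds are unaffordable except on a finite ``bad'' set.

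First I would partition $\{0,\dots,t-1\}$ into a bad part $\mathcal{B}=\{i<t_0\}\cup(\{i\geq t_0\}\cap I)$ and a good part $\mathcal{G}=\{i\geq t_0\}\setminus I$. By Lemma~\ref{lem:bad_set}, $|\mathcal{B}|\leq t_0+|I|$ is controlled by $\Psi(\bar{B}_0)+\tfrac{3\kappa}{\alpha(1-\beta)}\log(C_0/\delta_1)$ from $t_0$ and $\delta_4(\Psi(\tilde{B}_0)+2C_0\Psi(\bar{B}_0)+6C_0\kappa/(\alpha(1-\beta)))$ from $|I|$. On $\mathcal{B}$, Lemma~\ref{lemma_line_search} still delivers $\hat{p}_i\geq\alpha$ and $\hat{n}_i\geq 1-\beta$, so the contribution to the target sum is at most $\delta_6\,|\mathcal{B}|$ with $\delta_6=\log\tfrac{1}{2\alpha(1-\beta)}$. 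This already accounts for the $\delta_6$-coefficient in front of $\Psi(\bar{B}_0)$, the $\log(C_0/\delta_1)$-term, and the $\delta_4\delta_6$-part of $\delta_7$ in front of $\Psi(\tilde{B}_0)$.

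On $\mathcal{G}$, the conditions $C_i\leq\delta_1$ and $\rho_i\in[\delta_2,\delta_3]$ ensure, by Lemma~\ref{lem:unit_step_size}, that $\eta_i=1$ is Armijo--Wolfe-admissible and is therefore selected by the unit-step-first line search of Section~\ref{sec:complexity}. Hence Lemma~\ref{lem:lower_bound} applies. The key subtlety is that $\hat{p}_i\geq 1-(1+C_i)/(2\rho_i)$ tends to $1/2$ (not $1$) in the Newton-like limit, so it cannot be handled in isolation; the choice $P=\nabla^{2}f(x_*)$ is essential here because it makes $\hat{q}_i\to 2$ near $x_*$ by strong convexity combined with Hessian Lipschitzness, so the product $\hat{p}_i\hat{q}_i$ tends to $1$. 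Combining this with $\hat{n}_i\geq 1/((1+C_i)\rho_i)\to 1$ and consolidating the logarithms, one obtains an estimate of the form $-\log\bigl(\hat{p}_i\hat{q}_i\hat{n}_i\bigr)\leq\mathcal{O}(C_i+\omega(\rho_i-1))$ on $\mathcal{G}$, where $\omega(x)=x-\log(1+x)$ is precisely the function used to define $\delta_4$. The remaining product $\prod_{i=0}^{t-1}\cos^{2}(\hat{\theta}_i)/\hat{m}_i$ is bounded across all $t$ iterations by the classical trace-determinant identity in Proposition~\ref{lemma_BFGS}, yielding a contribution of the form $\Psi(\tilde{B}_0)$ plus a $\sum C_i$-correction.

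To close, I would sum the $C_i$-contributions using the linear rate of Theorem~\ref{thm:first_linear_rate}, which gives $\sum_{i\geq 0} C_i\lesssim C_0\Psi(\bar{B}_0)+C_0\kappa/(\alpha(1-\beta))$ as a geometric series, and sum the $\omega(\rho_i-1)$-terms using Proposition~\ref{lemma_BFGS_superlinear}, which bounds them by $\Psi(\tilde{B}_0)+2C_0\Psi(\bar{B}_0)+6C_0\kappa/(\alpha(1-\beta))$. Adding the $\mathcal{B}$-contribution, the $\mathcal{G}$-contribution, and the trace-determinant bound, then regrouping by $\Psi(\tilde{B}_0)$, $\Psi(\bar{B}_0)$, $C_0\Psi(\bar{B}_0)$ and $C_0\kappa$, reproduces exactly the constants $\delta_6,\delta_7,\delta_8$ appearing in $h$. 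The main obstacle I anticipate is the bookkeeping in this last step: the constants $\delta_5,\delta_7,\delta_8$ encode rather intricate algebraic identities that only emerge after carefully separating the $\rho$-only and $C$-only parts of the bound $-\log\hat{p}_i\leq\log(2\rho_i/(2\rho_i-1-C_i))$ and tracking how each piece propagates into each of the four distinct coefficients of the final expression; getting all three groups of coefficients to line up is where most of the computation lies.
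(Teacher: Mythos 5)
Your proposal is correct and follows essentially the same route as the paper's proof: the same weight matrix $P=\nabla^{2}f(x_*)$, the same three-way partition of iterations into $\{i<t_0\}$, the bad-$\rho$ set, and the good set where the unit step is admissible, the same use of Lemma~\ref{lemma_line_search} on the bad iterations and Lemmas~\ref{lem:lower_bound}--\ref{lem:bad_set} on the good ones, and the same closing step of bounding $\sum_i C_i$ via the linear rate of Theorem~\ref{thm:first_linear_rate} and $\sum_i\omega(\rho_i-1)$ via Proposition~\ref{lemma_BFGS_superlinear} before assembling everything through Proposition~\ref{lemma_bound}. Your observation that $\hat{p}_i\to\tfrac12$ must be paired with $\hat{q}_i\to 2$ is exactly the cancellation the paper exploits, and the remaining work you flag is indeed only the bookkeeping that produces $\delta_5,\delta_7,\delta_8$.
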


\vspace{-1mm}

The above result shows a global superlinear convergence rate of the form $\mathcal{O}((\frac{C'}{t})^t)$, where $C'$ depends on the condition number $\kappa$, the initial weighted distance $C_0$, and the initial Hessian approximation matrix $B_0$. To simplify the expression, we report the above bound for $B_0 = LI$ and $B_0 = \mu I$. 

\begin{corollary}\label{coro:superlinear_rate}
Suppose Assumptions~\ref{ass_str_cvx}, \ref{ass_smooth}, and \ref{ass_Hess_lip} hold and the iterates $\{x_t\}_{t \geq 0}$ are generated by the BFGS method with step size satisfying the Armijo-Wolfe conditions in \eqref{sufficient_decrease} and \eqref{curvature_condition}, and  $x_0 \in \mathbb{R}^{d}$ as an arbitrary initial point. Then, given the result in Theorem~\ref{thm:superlinear_rate}, the following results hold:
\begin{itemize}
\vspace{-1mm}
\item If we set $B_0 = LI$, then we have
\begin{equation}\label{eq:superlinear_rate_L}
    \frac{f(x_t) - f(x_*)}{f(x_0) - f(x_*)} \leq \left(\frac{\delta_7 d\kappa + (\frac{3\delta_6}{\alpha(1 - \beta)}\log{\frac{C_0}{\delta_1}} + \frac{3\delta_8}{\alpha(1 - \beta)}C_0)\kappa}{t}\right)^{t}.
\end{equation}
\vspace{-4mm}
\item If we set $B_0 = \mu I$,  then we have
\vspace{-2mm}
\begin{equation}\label{eq:superlinear_rate_mu}
    \!\! \frac{f(x_t) - f(x_*)}{f(x_0) - f(x_*)} \leq \left(\frac{(\delta_6 + \delta_7 + \delta_8 C_0) d\log{\kappa} + (\frac{3\delta_6}{\alpha(1 - \beta)}\log{\frac{C_0}{\delta_1}} + \frac{3\delta_8}{\alpha(1 - \beta)}C_0)\kappa}{t}\right)^{t}\!\!.
\end{equation}
\end{itemize}
\end{corollary}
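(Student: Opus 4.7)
The plan is to specialize Theorem~\ref{thm:superlinear_rate} to the two prescribed initializations by computing, or bounding, the two potential terms $\Psi(\bar{B}_0)$ and $\Psi(\tilde{B}_0)$ that appear in the general rate \eqref{eq:superlinear_rate}. Since the numerator inside the base is affine in these two quantities, once we have clean upper estimates the two claims follow by direct substitution.

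First I would handle the case $B_0 = LI$. Here $\bar{B}_0 = \frac{1}{L}(LI) = I$, so $\Psi(\bar{B}_0) = \Tr(I) - d - \log\Det(I) = 0$, which wipes out the entire middle group $(\delta_6 + \delta_8 C_0)\Psi(\bar{B}_0)$ in \eqref{eq:superlinear_rate}. For the other potential, $\tilde{B}_0 = L\nabla^2 f(x_*)^{-1}$, and by Assumptions~\ref{ass_str_cvx} and \ref{ass_smooth} we have $\mu I \preceq \nabla^2 f(x_*) \preceq L I$, so every eigenvalue of $\tilde{B}_0$ lies in $[1,\kappa]$. Consequently $\Tr(\tilde{B}_0) \leq d\kappa$ and $\log\Det(\tilde{B}_0) \geq 0$, giving $\Psi(\tilde{B}_0) \leq d\kappa - d \leq d\kappa$. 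Plugging these two estimates into \eqref{eq:superlinear_rate} yields exactly \eqref{eq:superlinear_rate_L}.

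Next I would handle $B_0 = \mu I$. Now $\bar{B}_0 = \frac{\mu}{L} I = \frac{1}{\kappa} I$, so $\Psi(\bar{B}_0) = \frac{d}{\kappa} - d + d\log\kappa \leq d\log\kappa$, using $\kappa\geq 1$ to drop the nonpositive terms. Similarly $\tilde{B}_0 = \mu \nabla^2 f(x_*)^{-1}$ has all eigenvalues in $[\tfrac{1}{\kappa},1]$, so $\Tr(\tilde{B}_0) - d \leq 0$ and $-\log\Det(\tilde{B}_0) \leq d\log\kappa$, which together yield $\Psi(\tilde{B}_0) \leq d\log\kappa$. Substituting both bounds into \eqref{eq:superlinear_rate} and collecting the common $d\log\kappa$ factor turns $\delta_7\Psi(\tilde{B}_0) + (\delta_6 + \delta_8 C_0)\Psi(\bar{B}_0)$ into $(\delta_6 + \delta_7 + \delta_8 C_0) d\log\kappa$, which is precisely the leading term in \eqref{eq:superlinear_rate_mu}; the $\kappa$-dependent remainder is untouched by the choice of $B_0$ and carries over verbatim.

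In short, the proof is purely computational: there is no hard step to overcome beyond carefully reading off the spectral ranges of $\bar{B}_0$ and $\tilde{B}_0$ for each initialization and inserting the resulting bounds on the potential function $\Psi$ into Theorem~\ref{thm:superlinear_rate}. The mildest care point is remembering that $\Psi$ is not scale-invariant, so the estimates on $\tilde{B}_0$ must be derived from the explicit eigenvalue interval rather than by rescaling from $\bar{B}_0$; once that is done, both bounds are immediate consequences of the inequality $\lambda - 1 - \log\lambda \leq \lambda - 1$ for $\lambda \geq 1$ and of $\lambda - 1 - \log\lambda \leq -\log\lambda$ for $\lambda \leq 1$.
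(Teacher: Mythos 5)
Your proposal is correct and follows essentially the same route as the paper: the paper's own proof simply cites the bounds $\Psi(\bar{B}_0)=0$, $\Psi(\tilde{B}_0)\leq d\kappa$ for $B_0=LI$ and $\Psi(\bar{B}_0)\leq d\log\kappa$, $\Psi(\tilde{B}_0)\leq d\log\kappa$ for $B_0=\mu I$ (established in the proof of Corollary~\ref{coro:second_linear_rate}) and substitutes them into \eqref{eq:superlinear_rate}. Your eigenvalue computations supply exactly these bounds, so the argument goes through verbatim.
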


This result shows that BFGS with $B_0=LI$ achieves a global superlinear rate of $\mathcal{O}((\frac{d\kappa + C_0 \kappa}{t})^{t})$, while BFGS with the initialization  $B_0=\mu I$ converges at a global superlinear rate of $\mathcal{O}((\frac{C_0 d\log{\kappa} + C_0 \kappa}{t})^{t})$. Hence, the superlinear result for $B_0=\mu I$  outperforms the rate for $B_0=LI$ when $C_0\log \kappa \ll \kappa$.

\begin{remark}\label{remark}
We chose \( B_0 = LI \) and \( B_0 = \mu I \) as two specific cases since they lead to explicit upper bounds in terms of the dimension \( d \) and the condition number \( \kappa \) in various theorems, simplifying the interpretation of our results. In practice, however, we often set \( B_0 = cI \), where \( c = \frac{s^\top y}{\|s\|^2} \), with \( s = x_2 - x_1 \), \( y = \nabla f(x_2) - \nabla f(x_1) \), and \( x_1, x_2 \) as two randomly selected vectors. This choice ensures \( c \in [\mu, L] \), and in the following numerical experiments, the performance of \( B_0 = cI \) is similar to that of \( B_0 = \mu I \). The complexity of BFGS with this initialization is reported in Appendix~\ref{complexity cI}.
\end{remark}

\section{Complexity Analysis}\label{sec:complexity}

\noindent\textbf{Discussions on the iteration complexity.} Using the three established convergence results in Theorems~\ref{thm:first_linear_rate}, \ref{thm:second_linear_rate} and~\ref{thm:superlinear_rate}, we can characterize the total number of iterations required for the BFGS method with the Armijo-Wolfe line search to find a solution with function suboptimality less than $\epsilon$. However, as discussed above, the choice of the initial Hessian approximation $B_0$ heavily influences the number of iterations required to observe these rates. To simplify our discussion, we focus on two specific initializations: $B_0 = L I$ and $B_0 = \mu I$.

\noindent\underline{\textbf{The case of $B_0=L I$:}} The overall iteration complexity of BFGS with $B_0=LI$ is given by 
\begin{equation*}
     \bigO\left(\min\left\{\kappa \log{\frac{1}{\epsilon}}, (d+C_0)\kappa+ \log{\frac{1}{\epsilon}}, \frac{\log{\frac{1}{\epsilon}}}{\log{\left(\frac{1}{2} + \sqrt{\frac{1}{4} + \frac{1}{d\kappa + C_0\kappa}\log{\frac{1}{\epsilon}}}\right)}} \right\}\right).
\end{equation*} 
    
\noindent\underline{\textbf{The case of $B_0=\mu I$:}} The overall iteration complexity of BFGS with $B_0=\mu I$ is given by 
\begin{equation*}
    \! \bigO\!\left(\!\min\left\{ d\log \kappa + \kappa \log{\frac{1}{\epsilon}}, C_0(d\log{\kappa} + \kappa) + \log{\frac{1}{\epsilon}}, \frac{\log{\frac{1}{\epsilon}}}{\log{\left(\frac{1}{2}\! +\! \sqrt{\frac{1}{4} \!+\! \frac{1}{C_0(d\log{\kappa} + \kappa)}\log{\frac{1}{\epsilon}}}\right)}} \right\} \right).
\end{equation*}

We remark that the comparison between these two complexity bounds depends on the relative values of $\kappa$, $d$, $C_0$, and $\epsilon$, and neither is uniformly better than the other. It is worth noting that for BFGS with $B_0 = LI$, we achieve a complexity that is consistently superior to the $\bigO\left(\kappa \log{\frac{1}{\epsilon}}\right)$ complexity of gradient descent. Moreover, in scenarios where $C_0 = \bigO(1)$ and $d \ll \kappa$, BFGS with $B_0 = \mu I$ could result in an iteration complexity of $\bigO\left(\kappa + \log{\frac{1}{\epsilon}}\right)$, which is much more favorable than that of gradient descent. The proof of these complexity bounds can be found in Appendix~\ref{proof_of_complexity}.

\noindent\textbf{Discussions on the line search complexity.} We present the log bisection algorithm to choose the step size $\eta_t$ at iteration $t$ satisfying the Armijo-Wolfe conditions \eqref{sufficient_decrease} and \eqref{curvature_condition} in Algorithm~\ref{algo_wolfe} in Appendix~\ref{sec:algorithm}. We define $\eta_{min}$ and $\eta_{max}$ as the lower and upper bounds of the ``slicing window'' containing the trial step size $\eta_t$, respectively. We start with the initial trial step size $\eta_t = 1$ and keep enlarging or decreasing it depending on whether the Armijo condition \eqref{sufficient_decrease} or the curvature condition \eqref{curvature_condition} is satisfied. Then, we dynamically update $\eta_{min}$, $\eta_{max}$ and shrink the size of this ``slicing window'' $(\eta_{min}, \eta_{max})$. We pick the trial step size $\eta$ as the geometric mean of $\eta_{min}$ and $\eta_{max}$, i.e., $\log{\eta} = {(\log{\eta_{max}} + \log{\eta_{max}})}/{2}$, which is the reason why we call this algorithm ``log bisection''. Note that in each loop of Algorithm~\ref{algo_wolfe}, we query the function value and gradient at most once to check the Armijo-Wolfe conditions at Lines~\ref{line:sufficient_decrease} and \ref{line:curvature}. The next theorem characterizes the average number of function value and gradient evaluations per iteration in Algorithm~\ref{algo_wolfe} after $t$ iterations, denoted by $\Lambda_t$, which is equivalent to the average number of loops per iterations.

\begin{theorem}\label{thm:line_search}
Suppose Assumptions~\ref{ass_str_cvx}, \ref{ass_smooth} and \ref{ass_Hess_lip} hold. Let $\{x_t\}_{t\geq 0}$ be generated by BFGS  with step size satisfying the Armijo-Wolfe conditions in \eqref{sufficient_decrease} and \eqref{curvature_condition} and is chosen by Algorithm~\ref{algo_wolfe}. If we define $\sigma:=(\Psi(\bar{B}_{0})+ \frac{3}{\alpha(1 - \beta)}\kappa)C_0$, then for any initial point $x_0 \!\in\! \mathbb{R}^{d}$ and initial Hessian approximation $B_0\! \in\! \mathbb{S}^d_{++}$, the average number of the function value and gradient evaluations per iteration in Algorithm~\ref{algo_wolfe} after $t$ iterations satisfies
\begin{equation*}
    \Lambda_t \leq 2 + \log_{2}\!\Big(1 + \frac{1 - \beta}{\beta - \alpha} + \frac{2(1 - \beta)}{\beta - \alpha}\frac{\sigma}{t}\Big) + 2\log_{2}\Big(\log_{2}16(1 - \alpha) + \log_{2}\!\big(1 + \frac{\sigma}{t}) + \frac{6\Psi(\tilde{B}_{0}) + 12\sigma}{t}\Big).
\end{equation*}
\end{theorem}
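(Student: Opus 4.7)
My plan is to bound the number of function-plus-gradient evaluations in a single execution of Algorithm~\ref{algo_wolfe} at outer iteration $t$, then sum across $t$ iterations and apply concavity of $\log_2$ to convert a sum of logarithms into a logarithm of a sum. Algorithm~\ref{algo_wolfe} has two phases: an \emph{expansion phase} that starts from the trial $\eta_t = 1$ and doubles (or halves) until the slicing window $(\eta_{\min}, \eta_{\max})$ brackets an admissible step, followed by a \emph{log-bisection phase} that repeatedly tests the geometric mean of $(\eta_{\min}, \eta_{\max})$ until an admissible step is found. Each loop queries $f$ and $\nabla f$ at most once at Lines~\ref{line:sufficient_decrease}--\ref{line:curvature}, so the per-iteration cost equals the number of loops.

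The first technical step is to establish explicit lower and upper bounds $\eta^{\mathrm{lo}}_t \le \eta^{\mathrm{hi}}_t$ on every step size satisfying the Armijo--Wolfe conditions at iteration $t$. Using Assumptions~\ref{ass_str_cvx}, \ref{ass_smooth}, and \ref{ass_Hess_lip} together with Lemma~\ref{lemma_line_search}, $\eta^{\mathrm{hi}}_t$ is controlled by smoothness and $\eta^{\mathrm{lo}}_t$ by the curvature condition combined with strong convexity, both expressed in terms of $\rho_t$ from~\eqref{definition_rho} and $C_t$ from~\eqref{distance}. Once this admissible interval is in hand, the expansion phase multiplies or divides $\eta$ by a factor of $2$ each loop, so the number of expansion loops is bounded above by $\log_2\bigl(1 + \tfrac{1 - \beta}{\beta - \alpha}(1 + C_t)\bigr)$ up to an additive constant. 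The subsequent log-bisection phase halves the log-width $W_k$ of the slicing window each loop and terminates once the midpoint falls within the admissible interval of log-width $w_t$; this yields at most $\log_2(W_0/w_t)$ loops, which I then bound by $2\log_2\bigl(\log_2 16(1 - \alpha) + \log_2(1 + C_t) + \text{small terms}\bigr)$ by plugging in $W_0$ from the expansion estimate and $w_t$ from the admissible-interval bounds.

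To pass from the per-iteration estimate to the average $\Lambda_t$, I would sum the two logarithmic contributions over $i = 0, \ldots, t - 1$, divide by $t$, and apply Jensen's inequality to the concave function $\log_2(\cdot)$ to obtain terms of the form $\log_2(1 + \tfrac{1}{t}\sum_i C_i)$ and $\log_2(\cdots + \tfrac{1}{t}\sum_i C_i)$. The key summability input is the bound $\sum_{i = 0}^{t - 1} C_i \le \sigma = \bigl(\Psi(\bar{B}_0) + \tfrac{3\kappa}{\alpha(1 - \beta)}\bigr) C_0$, which follows from the definition of $C_t$ together with the global linear rate from Theorem~\ref{thm:first_linear_rate}: the geometric factor $\bigl(1 - \tfrac{2\alpha(1 - \beta)}{\kappa}\bigr)^{i/2}$ and the pre-factor $e^{\Psi(\bar{B}_0)/(2i)}$ combine to a geometric series summing to $\bigl(\Psi(\bar{B}_0) + \tfrac{\kappa}{\alpha(1 - \beta)}\bigr) C_0$ up to absolute constants. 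The additional $\tfrac{6\Psi(\tilde{B}_0) + 12\sigma}{t}$ inside the outer logarithm arises in the same way, but traced through Proposition~\ref{prop:second_linear_rate} (or equivalently by reusing the size bound on the ``bad'' set from Lemma~\ref{lem:bad_set}), which is precisely what inflates the $\Psi(\bar{B}_0)$ there into $\Psi(\tilde{B}_0) + 2 C_0\Psi(\bar{B}_0)+ \tfrac{6 C_0\kappa}{\alpha(1 - \beta)}$.

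The main obstacle will be executing Step~2 cleanly so that the per-iteration cost grows only polylogarithmically in $\rho_t^{-1}$ and $(\beta - \alpha)^{-1}$ rather than polynomially. The geometric doubling in the expansion phase and the log-bisection in the search phase are what make this possible: together they convert the admissible interval's log-width into a single logarithm, and the starting window's log-width into a log-of-log term, which is exactly what explains the outer $2\log_2(\cdot)$ in the statement. Pinning down the exact constants $16(1 - \alpha)$ and $\tfrac{1 - \beta}{\beta - \alpha}$ requires careful bookkeeping of both the admissible interval bounds in Step~1 and the per-loop window update rule of Algorithm~\ref{algo_wolfe}, but conceptually the proof is just the two-phase decomposition combined with Jensen's inequality and the previously established linear convergence rates.
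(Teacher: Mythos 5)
Your high-level architecture matches the paper's: an admissible-interval lemma giving $\eta_r,\eta_l$ in terms of $\rho_t$ and $C_t$ (Lemma~\ref{lemma_step_difference}), a two-phase expansion-then-bisection count per iteration (Proposition~\ref{proposition_stepsize}), and then Jensen's inequality plus $\sum_{i=0}^{t-1}C_i\leq\sigma$ to average. However, there are two concrete gaps. First, you describe the expansion phase as multiplying or dividing $\eta$ by a factor of $2$ each loop. Algorithm~\ref{algo_wolfe} does not do this: it sets $\eta^{(i)}=2^{2^{i}-1}$ (resp.\ $(\tfrac12)^{2^{i}-1}$), i.e.\ it doubles the \emph{exponent} of the trial step each loop. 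This repeated squaring is what makes the expansion count $O(\log_2(1+\log_2(2(1-\alpha)(1+C_t)\rho_t)))$ --- a log-of-log in $\rho_t$ --- and it also makes the window at the start of the bisection phase have log-width $2^{K-1}$, whose logarithm contributes the \emph{second} copy of that same log-of-log term (hence the factor $2$ in $2\log_2(\cdot)$). With plain doubling the expansion count is $\Theta(|\log_2\eta_r|)=\Theta(|\log_2\rho_t|)$, which places $|\log_2\rho_t|$ \emph{outside} the outer logarithm; after averaging you would get an additive term of order $\frac{\Psi(\tilde B_0)+\sigma}{t}$ rather than $2\log_2(\cdots+\frac{6\Psi(\tilde B_0)+12\sigma}{t})$, so the stated bound would not follow. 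Relatedly, your claimed expansion-count bound $\log_2\bigl(1+\tfrac{1-\beta}{\beta-\alpha}(1+C_t)\bigr)$ cannot be right for that phase: it contains no dependence on $\rho_t$, i.e.\ on how far the algorithm must travel to reach the admissible interval; that quantity is the bisection-termination term, governed by the ratio $\eta_r/\eta_l$.

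Second, the route you propose for the term $\frac{6\Psi(\tilde B_0)+12\sigma}{t}$ does not quite work. What must be controlled is $\frac1t\sum_{i=0}^{t-1}\max\{\log_2\rho_i,\log_2\frac1{\rho_i}\}$, and the paper does this via the potential inequality $\sum_{i=0}^{t-1}\omega(\rho_i-1)\leq\Psi(\tilde B_0)+2\sum_{i=0}^{t-1}C_i$ from Proposition~\ref{lemma_BFGS_superlinear} combined with the elementary bounds $\log\rho\leq 3\,\omega(\rho-1)$ for $\rho\geq2$ and $\log\frac1\rho\leq4\,\omega(\rho-1)$ for $\rho\leq\frac12$ (Lemma~\ref{lemma_omega}). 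The cardinality bound of Lemma~\ref{lem:bad_set} only counts how many indices have $\rho_i\notin[\delta_2,\delta_3]$; it gives no control on how large $|\log_2\rho_i|$ is on those indices, so "reusing the size bound on the bad set" cannot by itself bound the sum. You need the summed $\omega$-inequality (the same one that underlies Lemma~\ref{lem:bad_set}), not its corollary.
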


The above result shows that when we run BFGS for $N$ iterations, the total number of function and gradient evaluations is $\bigO\bigl(N + N\log (1+\frac{\sigma}{N}) + N\log (1+  \frac{\Psi(\tilde{B}_0) + \sigma}{N})\bigr)$. Thus, the total line search complexity can always be bounded by $\bigO(N \log(\Psi(\tilde{B}_0)+\sigma)) = \bigO(N \max\{\log d, \log \kappa, \log C_0\})$. Furthermore, notice that when $N$ is sufficiently large such that we reach the superlinear convergence stage, i.e., $N = \Omega(\Psi(\tilde{B}_0)+\sigma)$, the total line search complexity becomes $\bigO(N)$, which means the average number of function and gradient evaluations per iteration is a constant $\mathcal{O}(1)$. We report the line search complexity results of different $B_0 = LI$ and $B_0 = \mu I$ in Appendix~\ref{sec:line_search_results}. 

\section{Numerical Experiments}\label{sec:numerical_experiments}

\begin{figure}[t!]
    \centering
    \subfigure[$d = 100$, $\kappa = 100$.]{\includegraphics[width=0.32\linewidth]{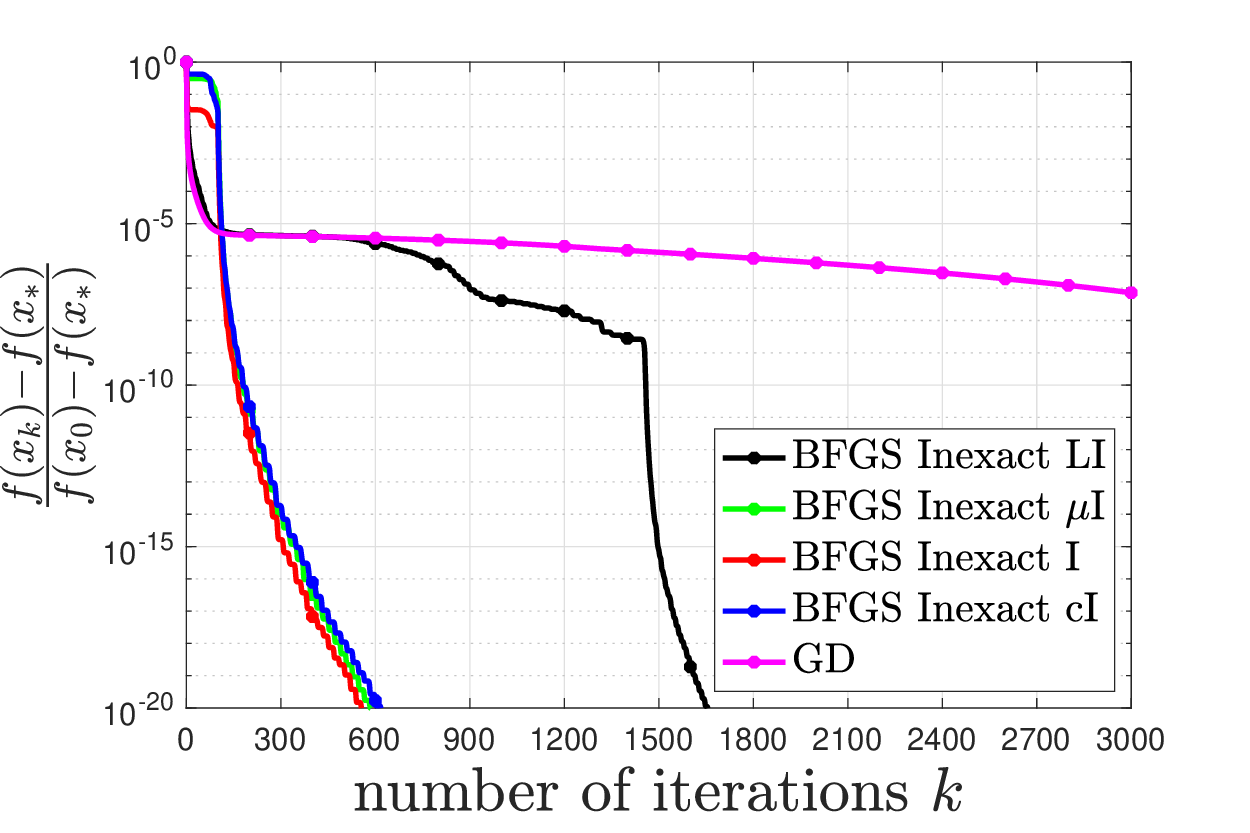}}
    \subfigure[$d = 100$, $\kappa = 1000$.]{\includegraphics[width=0.32\linewidth]{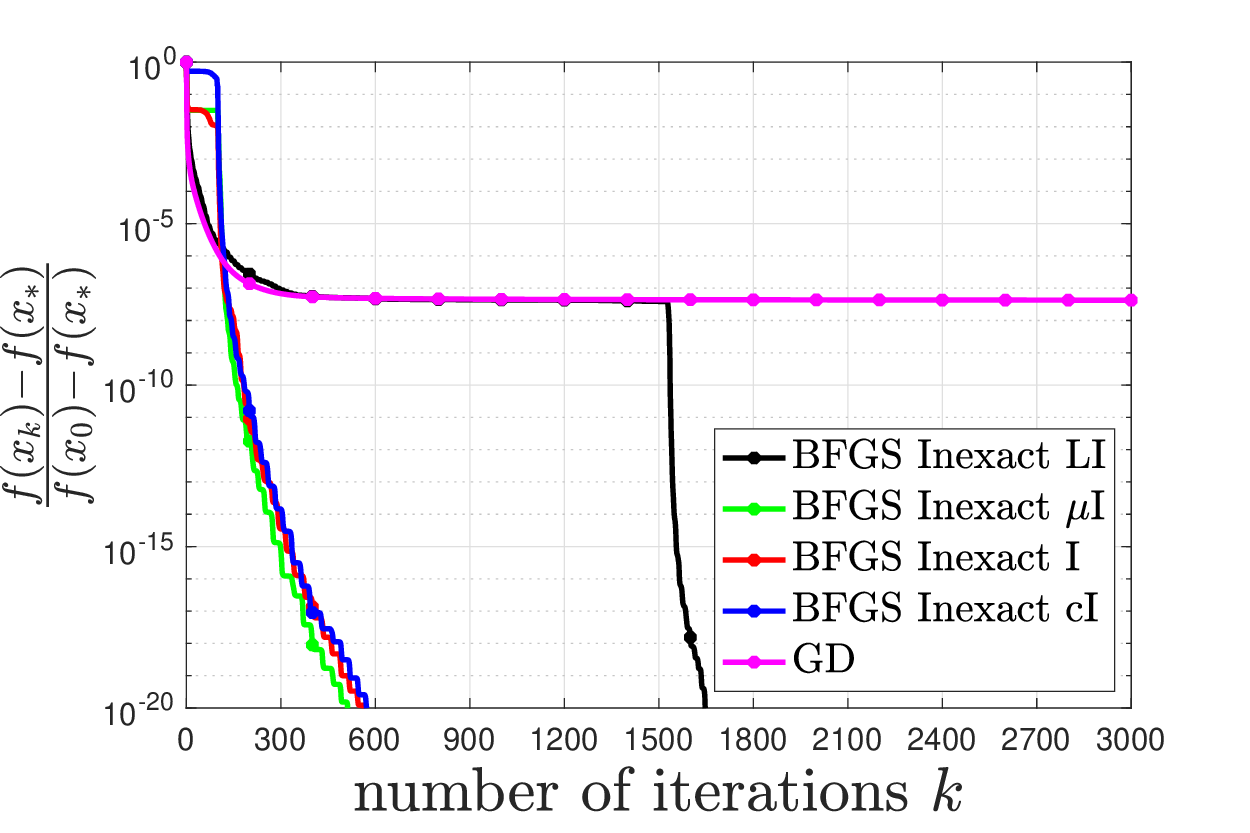}}
    \subfigure[$d = 300$, $\kappa = 100$.]{\includegraphics[width=0.32\linewidth]{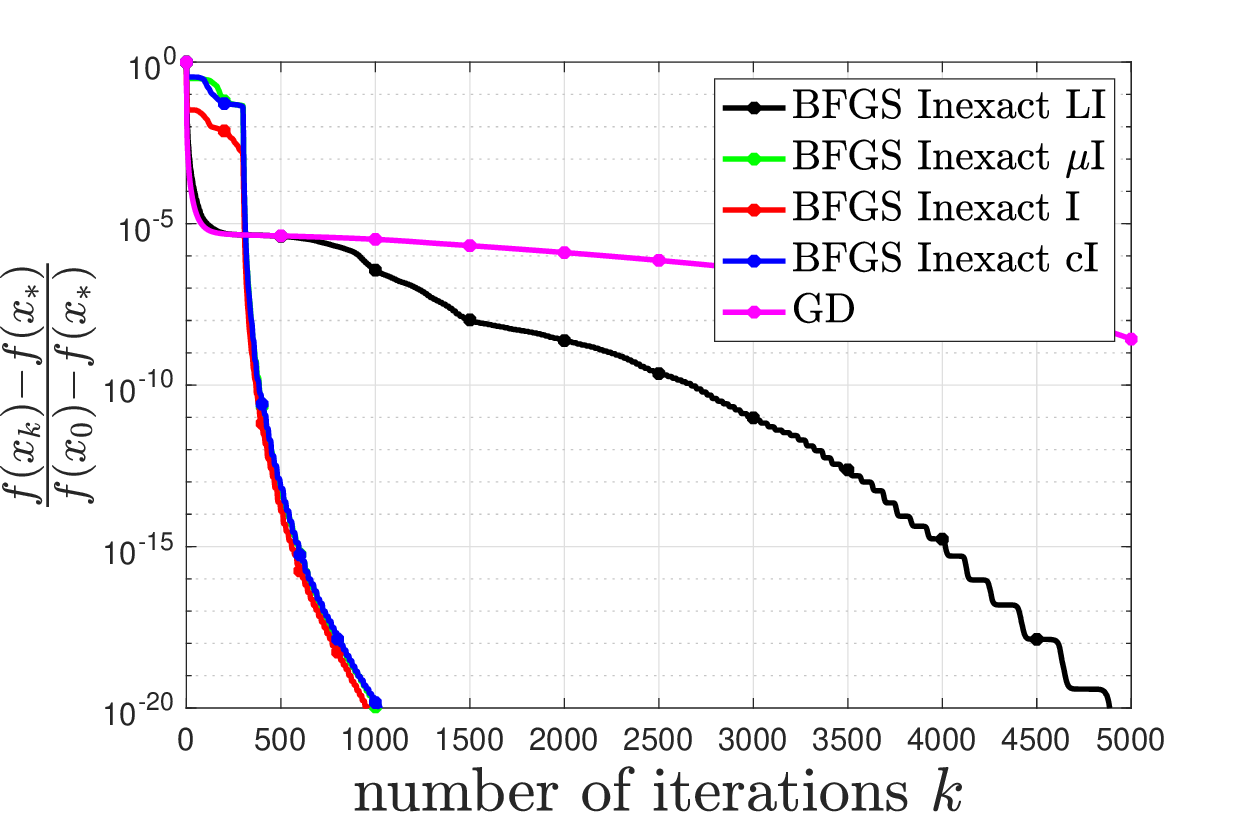}}
    \subfigure[$d = 300$, $\kappa = 1000$.]{\includegraphics[width=0.32\linewidth]{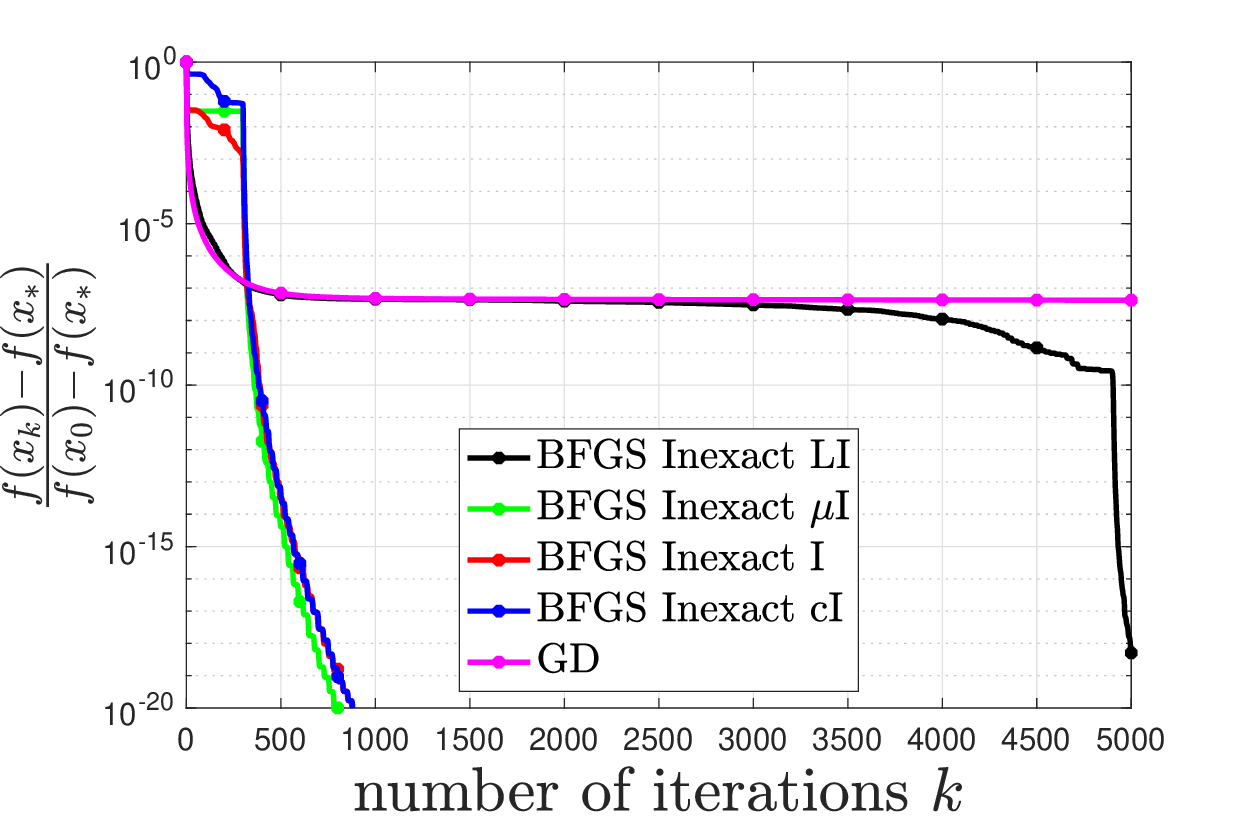}}
    \subfigure[$d = 600$, $\kappa = 100$.]{\includegraphics[width=0.32\linewidth]{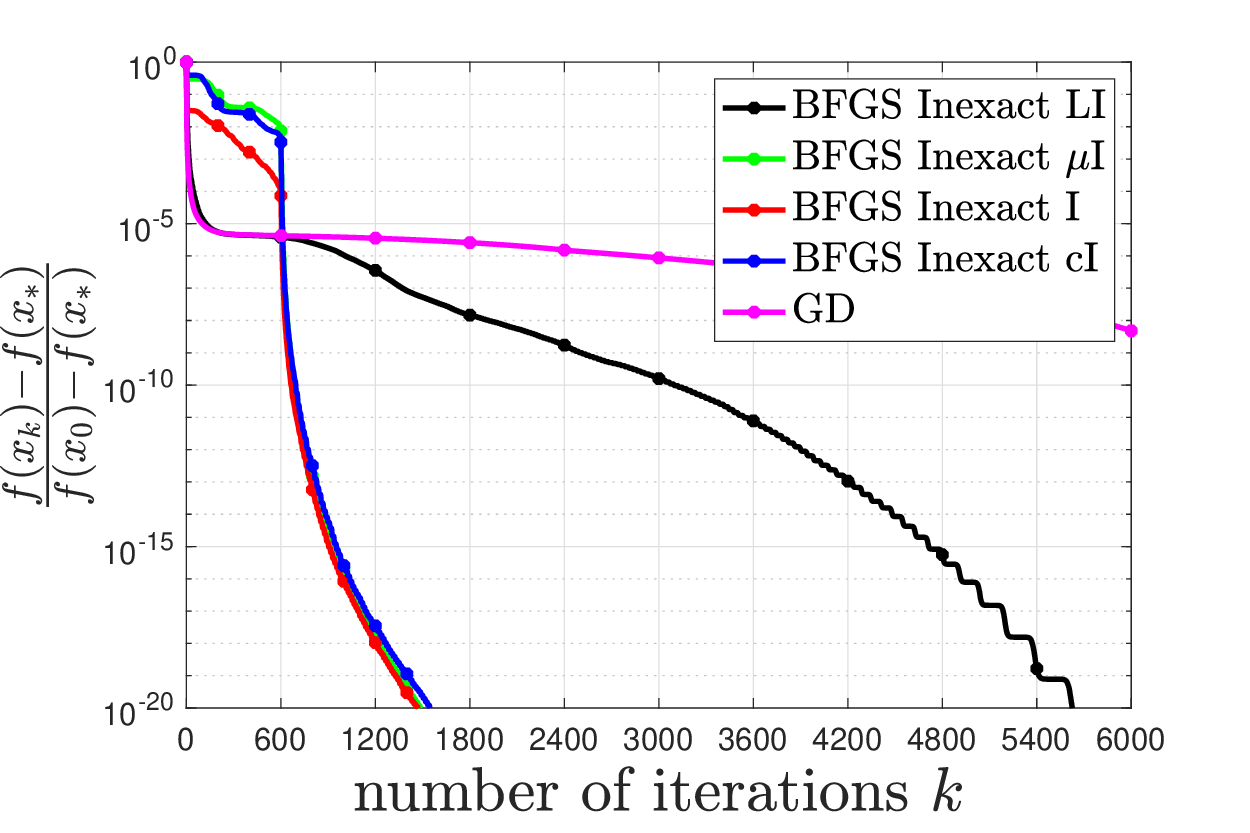}}
    \subfigure[$d = 600$, $\kappa = 1000$.]{\includegraphics[width=0.32\linewidth]{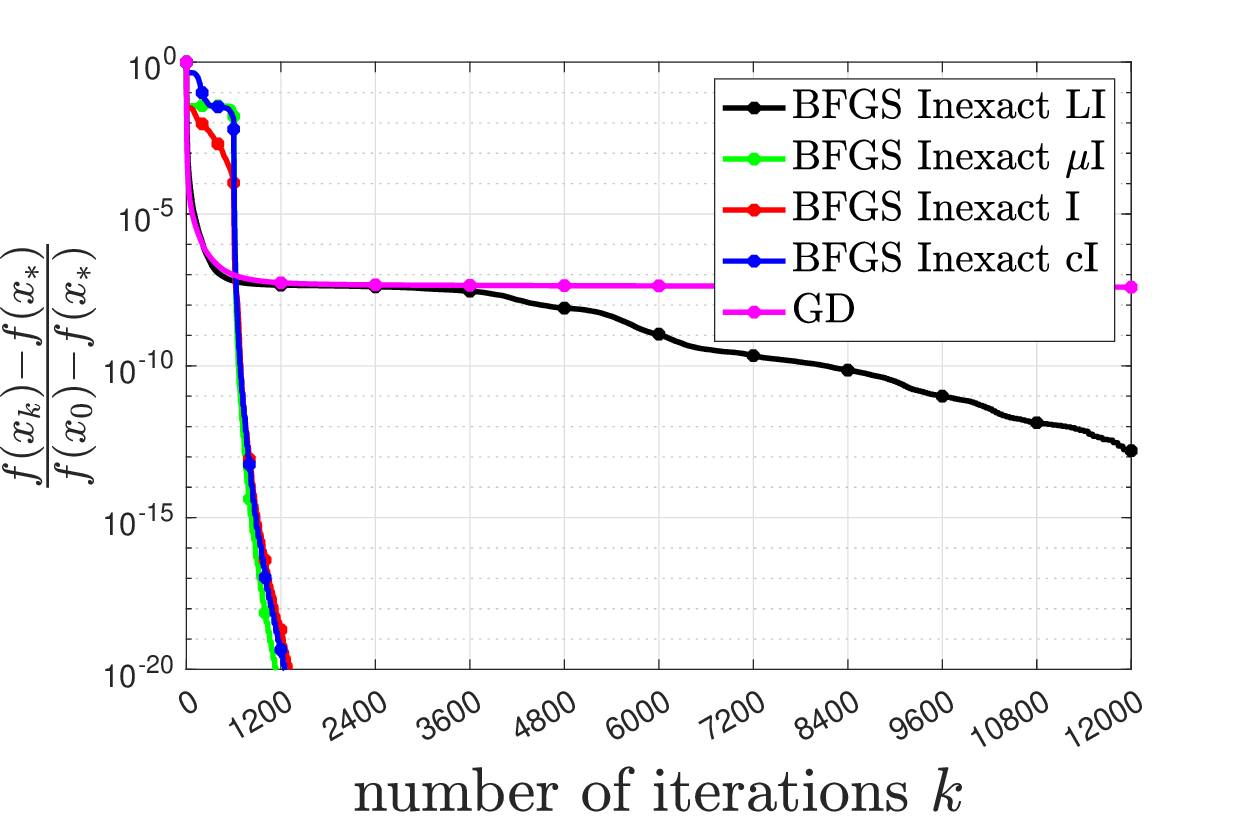}}
    \caption{Convergence curves of BFGS with inexact line search of different $B_0$ and gradeint descent with backtracking line search.}
    \label{fig:1}
\end{figure}

We conduct numerical experiments on a cubic objective function defined as
\begin{equation}
    f(x) = \frac{\alpha}{12}\left(\sum_{i = 1}^{d - 1}g(v_i^\top x - v_{i + 1}^\top x) - \beta v_1^\top x\right) + \frac{\lambda}{2}\|x\|^2,
\end{equation}
and $g: \mathbb{R} \to \mathbb{R}$ is defined as
\begin{equation}
    g(w) =
    \begin{cases}
        \frac{1}{3}|w|^3 & |w| \leq \Delta, \\
        \Delta w^2 - \Delta^2 |w| + \frac{1}{3}\Delta^3  & |w| > \Delta,
    \end{cases}       
\end{equation}
where $\alpha, \beta, \lambda, \Delta \in \mathbb{R}$ are hyper-parameters and $\{v_i\}_{i = 1}^{n}$ are standard orthogonal unit vectors in $\mathbb{R}^{d}$. We focus on this objective function because it is used in \cite{yabe2007local} to establish a tight lower bound for second-order methods. We compare the convergence paths of BFGS with an inexact line search step size $\eta_t$ that satisfies the Armijo-Wolfe conditions \eqref{sufficient_decrease} and \eqref{curvature_condition} for various initialization matrices $B_0$: specifically, $B_0 = L I$, $B_0 = \mu I$, $B_0 = I$, and $B_0 = c I$ where $c$ is defined in Remark~\ref{remark}. It is easily verified that $c \in [\mu, L]$. We also compare the performance of BFGS methods to the gradient descent (GD) method with backtracking line search, using $\alpha = 0.1$ in condition \eqref{sufficient_decrease} and $\beta = 0.9$ in condition \eqref{curvature_condition}. Step size $\eta_t$ is chosen at each iteration via log bisection in Algorithm 1. Empirical results are compared across various dimensions $d$ and condition numbers $\kappa$, with the x-axis representing the number of iterations~$t$ and the y-axis showing the ratio $\frac{f(x_t) - f(x_*)}{f(x_0) - f(x_*)}$.

First, we observe that BFGS with $B_0 = L I$ initially converges faster than BFGS with $B_0 = \mu I$ in most plots, aligning with our theoretical findings that the linear convergence rate of BFGS with $B_0 = L I$ surpasses that of $B_0 = \mu I$ in Corollary~\ref{coro:first_linear_rate}. In Corollary~\ref{coro:first_linear_rate}, we show that BFGS with $B_0 = L I$ could achieve the linear rate of $(1 - 1/\kappa)$ from the first iteration while BFGS with $B_0 = \mu I$ needs to run $d\log{\kappa}$ to reach the same linear rate. Second, the transition to superlinear convergence for BFGS with $B_0 = \mu I$ typically occurs around $t \approx d$, as predicted by our theoretical analysis. Although BFGS with $B_0 = L I$ initially converges faster, its transition to superlinear convergence consistently occurs later than for $B_0 = \mu I$. Notably, for a fixed dimension $d=600$, the transition to superlinear convergence for $B_0 = L I$ occurs increasingly later as the problem condition number rises, an effect not observed for $B_0 = \mu I$. This phenomenon indicates that the superlinear rate for $B_0 = L I$ is more sensitive to the condition number $\kappa$, which corroborates our results in Corollary~\ref{coro:superlinear_rate}. In Corollary~\ref{coro:superlinear_rate}, we present that BFGS with $B_0 = L I$ needs $d\kappa$ steps to reach the superlinear convergence stage while this is improved to $d\log{\kappa}$ for BFGS with $B_0 = \mu I$. Moreover, the performance of BFGS with $B_0 = I$ and $B_0 = c I$ is similar to BFGS with $B_0 = \mu I$. Notice that the initializations of $B_0 = I$ and $B_0 = c I$ are two commonly-used practical choices of the initial Hessian approximation matrix $B_0$.

\section{Conclusions, Limitations, and Future Directions}\label{sec:conclusion}

In this paper, we analyzed the global non-asymptotic convergence rates of  BFGS with Armijo-Wolfe line search. We showed for an objective function that is $\mu$-strongly convex with an $L$-Lipschitz gradient, BFGS achieves a global convergence rate of $(1 - {1}/{\kappa})^t$, where $\kappa = {L}/{\mu}$. Additionally, assuming the Hessian is $M$-Lipschitz, we showed BFGS achieves a linear convergence rate determined solely by the line search parameters, independent of the condition number. Under similar assumptions, we also established a global superlinear convergence rate. Given these bounds, we determined the overall iteration complexity of BFGS with the Armijo-Wolfe line search and specified this complexity for initial Hessian approximations $B_0 = LI$ and $B_0 = \mu I$.

One limitation of this paper is that the analysis only applies to strongly convex  functions. Developing an analysis for the general convex setting is still unsolved. Another drawback is that we focus solely on the BFGS method. Extending our theoretical results to the entire convex Broyden's class of quasi-Newton methods, including both BFGS and DFP, is a natural next step.
 
\section*{Acknowledgments}

The research of Q. Jin, R. Jiang, and A. Mokhtari is supported in part by NSF Award 2007668 and the
NSF AI Institute for Foundations of Machine Learning (IFML).


\newpage

\appendix

\section*{Appendix}

\section{Some Results on the Connections between Different Hessian Matrices}

\begin{lemma}\label{lemma_Hessian}
    Suppose Assumptions~\ref{ass_str_cvx}, \ref{ass_smooth}, and \ref{ass_Hess_lip} hold, and recall the definitions of the matrices $J_t$ and $G_t$ in \eqref{def_J_G}, and the quantity $C_t$ in \eqref{distance}. Then, the following statements hold: 
    \begin{enumerate}[(a)]
        \item Suppose that $f(x_{t + 1}) \leq f(x_t)$ for any $t \geq 0$, we have that
        \begin{equation}\label{eq:J_t_vs_H*}
            \frac{1}{1 + C_t}\nabla^2{f(x_*)} \preceq J_t \preceq (1 + C_t)\nabla^2{f(x_*)}.
        \end{equation}
        \item Suppose that $f(x_{t + 1}) \leq f(x_t)$ for any $t \geq 0$ and $\hat{\tau} \in [0, 1]$, we have that
        \begin{equation}\label{eq:middle_hessian_vs_H*}
            \frac{1}{1 + C_t}\nabla^2{f(x_*)} \preceq \nabla^2{f(x_t + \hat{\tau} (x_{t + 1} - x_{t}))} \preceq (1 + C_t)\nabla^2{f(x_*)}.
        \end{equation}
        \item For any $t \geq 0$, we have that
        \begin{equation}\label{eq:H_t_vs_H*}
            \frac{1}{1 + C_t}\nabla^2{f(x_*)} \preceq \nabla^2{f(x_t)} \preceq (1 + C_t)\nabla^2{f(x_*)} .
        \end{equation}
        \item For any $t \geq 0$, we have that
        \begin{equation}\label{eq:g_t_vs_H*}
            \frac{1}{1 + C_t}\nabla^2{f(x_*)} \preceq G_t \preceq (1 + C_t)\nabla^2{f(x_*)} .
        \end{equation}
        \item For any $t \geq 0$ and $\tilde{\tau} \in [0, 1]$, we have that
        \begin{equation}\label{eq:middle_hessian_vs_g_t}
            \frac{1}{1 + C_t}G_t \preceq \nabla^2{f(x_t + \tilde{\tau} (x_{*} - x_{t}))} \preceq (1 + C_t)G_t.
        \end{equation}
        \item For any $t \geq 0$ and $\tilde{\tau}, \hat{\tau} \in [0, 1]$, suppose that $f(x_{t + 1}) \leq f(x_t)$. Then, we have that
        \begin{equation}\label{eq:g_t_1_vs_g_t_2}
            \frac{1}{1 + 2C_t}\nabla^2{f(x_t + \hat{\tau} s_t)} \preceq \nabla^2{f(x_t + \tilde{\tau} s_t)} \preceq (1 + 2C_t)\nabla^2{f(x_t + \hat{\tau} s_t)}.
        \end{equation}
    \end{enumerate}
\end{lemma}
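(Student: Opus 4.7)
The plan is to prove all six parts from a single unified recipe: bound distances to $x_*$ (or between relevant points) using strong convexity, apply the Hessian Lipschitz property of Assumption~\ref{ass_Hess_lip} to get an operator-norm bound on the difference of Hessians, and then convert that operator-norm bound into a PSD (relative) bound by using $\nabla^2 f(\cdot) \succeq \mu I$. Before doing any of the six parts, I would isolate the following one-sided trick as the workhorse. Suppose $y \in \reals^d$ satisfies $\|y - x_*\| \leq \sqrt{2(f(x_t)-f(x_*))/\mu}$. Then Assumption~\ref{ass_Hess_lip} gives $\nabla^2 f(x_*) - \nabla^2 f(y) \preceq M\|y-x_*\| I$, and bounding $I \preceq \mu^{-1}\nabla^2 f(y)$ via Assumption~\ref{ass_str_cvx} yields $\nabla^2 f(x_*) - \nabla^2 f(y) \preceq \frac{M\|y-x_*\|}{\mu}\nabla^2 f(y) \preceq C_t \nabla^2 f(y)$, which rearranges to the target form $\frac{1}{1+C_t}\nabla^2 f(x_*) \preceq \nabla^2 f(y)$. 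The matching upper bound $\nabla^2 f(y) \preceq (1+C_t)\nabla^2 f(x_*)$ follows symmetrically, bounding $I \preceq \mu^{-1}\nabla^2 f(x_*)$ instead.

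With this template, each claim reduces to identifying the right point $y$ and verifying the distance bound. Part (c) is immediate for $y = x_t$, since $\|x_t - x_*\|^2 \leq 2(f(x_t) - f(x_*))/\mu$ directly from strong convexity. Part (b) applies the template to $y = x_t + \hat{\tau}(x_{t+1} - x_t)$; the assumption $f(x_{t+1}) \leq f(x_t)$ gives $\max(\|x_t-x_*\|, \|x_{t+1}-x_*\|) \leq \sqrt{2(f(x_t)-f(x_*))/\mu}$, and the convex combination inherits this bound by the triangle inequality. Part (a) then follows by integrating the pointwise bound of (b) over $\tau \in [0,1]$, since the PSD cone is closed under integration. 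Part (d) applies the template to $y = x_t + \tau(x_* - x_t)$ on the optimality segment, for which $\|y - x_*\| = (1-\tau)\|x_t - x_*\|$ so no descent assumption is required, and then integrates over $\tau$.

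Parts (e) and (f) are variants where $\nabla^2 f(x_*)$ is replaced by another Hessian-type matrix, but the same one-sided trick still applies because the key ingredient is only that the reference Hessian is lower bounded by $\mu I$. For (e), write $w = x_t + \tilde{\tau}(x_* - x_t)$ and $z_\tau = x_t + \tau(x_* - x_t)$; the segment distance $\|z_\tau - w\| \leq \|x_* - x_t\|$ yields $M\|z_\tau - w\| \leq \mu C_t$, so using $\nabla^2 f(w) \succeq \mu I$ in the one-sided trick gives $\nabla^2 f(z_\tau) \preceq (1+C_t)\nabla^2 f(w)$ and the reverse bound, and integrating over $\tau$ delivers the stated inequalities between $\nabla^2 f(w)$ and $G_t$. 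For (f), with $u = x_t + \hat{\tau} s_t$ and $v = x_t + \tilde{\tau} s_t$, the triangle inequality combined with the descent assumption gives $\|s_t\| \leq \|x_t - x_*\| + \|x_{t+1} - x_*\| \leq 2\sqrt{2(f(x_t)-f(x_*))/\mu}$, so $M\|u-v\| \leq M\|s_t\| \leq 2\mu C_t$; this is exactly why the $(1+2C_t)$ factor appears rather than $(1+C_t)$.

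The main obstacle is spotting the one-sided trick, i.e., absorbing the $M\|y-x_*\| I$ term into the Hessian on the \emph{right} of the PSD inequality (via strong convexity of \emph{that} Hessian) rather than into $\nabla^2 f(x_*)$. The naive approach absorbs it into $\nabla^2 f(x_*)$, which yields only $(1-C_t)\nabla^2 f(x_*) \preceq \nabla^2 f(y)$; this is strictly weaker than the claimed $\frac{1}{1+C_t}\nabla^2 f(x_*) \preceq \nabla^2 f(y)$ whenever $C_t > 0$, since $(1-C_t)(1+C_t) = 1 - C_t^2 < 1$. Once the trick is in place, everything else is a routine bookkeeping of distances on the various line segments and an appeal to closure of PSD bounds under integration.
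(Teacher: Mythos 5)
Your proposal is correct and follows essentially the same route as the paper: both hinge on the same ``one-sided'' absorption step, bounding the Hessian difference in operator norm via Assumption~\ref{ass_Hess_lip} and then converting $M\|y-x_*\|\,I \preceq C_t\,\nabla^2 f(y)$ (resp.\ $\preceq C_t\,\nabla^2 f(x_*)$) using strong convexity of whichever matrix sits on the right of the desired PSD inequality. The only cosmetic difference is that you obtain parts (a), (d), (e) by integrating the pointwise PSD bounds over $\tau$, whereas the paper integrates the operator-norm bounds before applying the absorption step; the two are interchangeable.
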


\begin{proof}
    
\begin{enumerate}[(a)]
    
    \item Recall the definition of $J_t$ in \eqref{def_J_G}. Using the triangle inequality, we have that
    \begin{align*}
        \|\nabla^2{f(x_{*})} - J_t\| &= \left\| \int_{0}^{1}\!\!\left(\nabla^2{f(x_{*})} - \nabla^2{f(x_t + \tau (x_{t+1}-x_t))} \right)d\tau\right\| \\
        &\leq \int_{0}^{1}\|\nabla^2{f(x_{*})} - \nabla^2{f(x_t + \tau (x_{t+1}-x_t))} \|d\tau. 
    \end{align*}
    Moreover, it follows from Assumption~\ref{ass_Hess_lip} that $ \|\nabla^2{f(x_{*})} - \nabla^2{f(x_t + \tau (x_{t+1}-x_t))} \| \leq M \|(1 - \tau)(x_* - x_t) + \tau (x_* - x_{t + 1})\|$ for any $\tau \in [0,1]$. Thus, we can further apply the triangle inequality to obtain  
    \begin{align*}
        \|\nabla^2{f(x_{*})} - J_t\| & \leq \int_{0}^{1}M \|(1 - \tau)(x_* - x_t) + \tau (x_* - x_{t + 1})\|d\tau \\
        & \leq M  \|x_t - x_*\|\int_{0}^{1}(1 - \tau)d\tau + M \|x_{t + 1} - x_*\|\int_{0}^{1} \tau d\tau \\
        & = \frac{M}{2}(\|x_t - x_*\| + \|x_{t + 1} - x_*\|). 
    \end{align*}
    Since $f$ is strongly convex, by Assumption~\ref{ass_str_cvx} and $f(x_{t+1}) \leq f(x_t)$, we have $\frac{\mu}{2}\|x_t - x_*\|^2 \leq f(x_t) - f(x_*)$, which implies that $\|x_t-x_*\| \leq \sqrt{2(f(x_t)-f(x_*))/\mu}$. Similarly, since $f(x_{t+1}) \leq f(x_t)$, it also holds that  $ \|x_{t+1}-x_*\| \leq \sqrt{2(f(x_{t+1})-f(x_*))/\mu} \leq \sqrt{2(f(x_{t})-f(x_*))/\mu}$. Hence, we obtain that
    \begin{equation}\label{eq:H_*-J_t}
        \|\nabla^2{f(x_{*})} - J_t\| \leq \frac{M}{\sqrt{\mu}}\sqrt{2(f(x_t) - f(x_*))}.
    \end{equation}
    Moreover, notice that by Assumption~\ref{ass_str_cvx}, we also have $J_t \succeq \mu I$ and $\nabla^2 f(x_*) \succeq \mu I$. Hence, \eqref{eq:H_*-J_t} implies that 
    \begin{align*}
        \nabla^2{f(x_{*})} - J_t &\preceq \|\nabla^2{f(x_{*})} - J_t\|I \preceq \frac{M}{\mu^{\frac{3}{2}}}\sqrt{2(f(x_t) - f(x_*))}J_t = C_t J_t, \\
        J_t - \nabla^2{f(x_{*})} &\preceq \|J_t - \nabla^2{f(x_{*})}\|I \preceq \frac{M}{\mu^{\frac{3}{2}}}\sqrt{2(f(x_t) - f(x_*))}\nabla^2{f(x_{*})} = C_t \nabla^2{f(x_{*})}.
    \end{align*}
    where we used the definition of $C_t$ in \eqref{distance}. By rearranging the terms, we obtain \eqref{eq:J_t_vs_H*}.

    \item Similar to the arguments in (a), for any $\hat{\tau} \in [0, 1]$, we have that
    \begin{equation*}
        \begin{split}
            & \phantom{{}={}}\left\| \nabla^2{f(x_t + \hat{\tau} (x_{t + 1} - x_{t}))} - \nabla^2{f(x_{*})}\right\| \\
            & \leq M\|(1 - \hat{\tau})(x_t - x_*) + \hat{\tau}(x_{t + 1} - x_*)\| \\
            & \leq M\Big((1 - \hat{\tau})\|x_t - x_*\| + \hat{\tau}\|x_{t + 1} - x_*\|\Big) \\
            & \leq M\Big((1 - \hat{\tau})\sqrt{\frac{2}{\mu}(f(x_{t}) - f(x_*))} + \hat{\tau}\sqrt{\frac{2}{\mu}(f(x_{t + 1}) - f(x_*))}\Big) \\
            & \leq M\sqrt{\frac{2}{\mu}(f(x_{t}) - f(x_*))} \\
        \end{split}
    \end{equation*}
    Moreover, notice that by Assumption~\ref{ass_str_cvx}, we also have $\nabla^2{f(x_t + \hat{\tau} (x_{t + 1} - x_{t}))} \succeq \mu I$ and $\nabla^2{f(x_{*})} \succeq \mu I$. The rest follows similarly as in the proof of (a) and we prove \eqref{eq:middle_hessian_vs_H*}.

    \item Similar to the arguments in (a), we have that
    \begin{equation*}
        \left\| \nabla^2{f(x_{*})} - \nabla^2{f(x_{t})} \right\| \leq M\|x_t - x_*\|  \leq \frac{M}{\sqrt{\mu}}\sqrt{2(f(x_t) - f(x_*))}.
    \end{equation*}
    Moreover, notice that by Assumption~\ref{ass_str_cvx} we also have $\nabla^2{f(x_{t})} \succeq \mu I$ and $\nabla^2 f(x_*) \succeq \mu I$. The rest follows similarly as in the proof of (a) and we prove \eqref{eq:H_t_vs_H*}. 

    \item Recall the definition of $G_t$ in \eqref{def_J_G}. Similar to the arguments in (a), we have that
    \begin{equation*}
        \begin{split}
            \| \nabla^2{f(x_{*})} - G_t \| & = \left\|\int_{0}^{1}\left(\nabla^2{f(x_{*})} - \nabla^2{f(x_t + \tau (x_{*} - x_t))}\right)d\tau \right\| \\
            & \leq \int_{0}^{1}\|\nabla^2{f(x_{*})} - \nabla^2{f(x_t + \tau (x_{*} - x_t))}\|d\tau \\
            & \leq M\int_{0}^{1}\|(1 - \tau)(x_* - x_t)\| d\tau = M\|x_t - x_*\|\int_{0}^{1}(1 - \tau)d\tau \\
            & = \frac{M}{2}\|x_t - x_*\|  \leq \frac{M}{\sqrt{\mu}}\sqrt{2(f(x_t) - f(x_*))}.
        \end{split}
    \end{equation*}
    Moreover, notice that by Assumption~\ref{ass_str_cvx} we also have $G_t \succeq \mu I$ and $\nabla^2 f(x_*) \succeq \mu I$. The rest follows similarly as in the proof of (a) and we prove \eqref{eq:g_t_vs_H*}. 
    
    \item Recall the definition of $g_t$ in \eqref{def_J_G}. Similar to the arguments in (a), for any $\tilde{\tau} \in [0, 1]$, we have that
    \begin{equation*}
        \begin{split}
        & \phantom{{}={}}\left\| \nabla^2{f(x_t + \tilde{\tau} (x_{*} - x_{t}))} -  G_t\right\| \\
        & = \left\| \int_{0}^{1}\left(\nabla^2{f(x_t + \tilde{\tau} (x_{*} - x_{t}))} - \nabla^2{f(x_t + \tau (x_{*}-x_t))} \right)d\tau\right\| \\
        & \leq \int_{0}^{1}\left\|\nabla^2{f(x_t + \tilde{\tau} (x_{*} - x_{t}))} - \nabla^2{f(x_t + \tau(x_{*}-x_t))} \right\|d\tau \\
        & \leq \int_{0}^{1}M|\tilde{\tau} - \tau| \|x_{t} - x_{*}\|d\tau \leq \frac{1}{2}M\|x_{t} -x_{*}\| \leq \frac{M}{\sqrt{\mu}}\sqrt{2(f(x_t) - f(x_*))}.
    \end{split}
    \end{equation*}
    Moreover, notice that by Assumption~\ref{ass_str_cvx}, we also have $\nabla^2{f(x_t + \tilde{\tau} (x_{*} - x_{t}))} \succeq \mu I$ and $G_t \succeq \mu I$. The rest follows similarly as in the proof of (a) and we prove \eqref{eq:middle_hessian_vs_g_t}.

    \item Similar to the arguments in (a), for any $\tilde{\tau}, \hat{\tau} \in [0, 1]$, we have that
    \begin{equation*}
        \begin{split}
        & \phantom{{}={}}\left\| \nabla^2{f(x_t + \tilde{\tau} s_t)} -  \nabla^2{f(x_t + \hat{\tau} s_t)}\right\| \\
        & \leq M|\tilde{\tau} - \hat{\tau}| \|s_t\| \leq M \|s_t\| \leq M(\|x_{t + 1} - x_*\| + \|x_{t} - x_*\|) \\
        & \leq M\Big(\sqrt{\frac{2}{\mu}(f(x_{t}) - f(x_*))} + \sqrt{\frac{2}{\mu}(f(x_{t + 1}) - f(x_*))}\Big) \\
        & \leq 2M\sqrt{\frac{2}{\mu}(f(x_{t}) - f(x_*))} \\
    \end{split}
    \end{equation*}
    Moreover, notice that by Assumption~\ref{ass_str_cvx}, we also have $\nabla^2{f(x_t + \tilde{\tau} s_t)}  \succeq \mu I$ and $\nabla^2{f(x_t + \hat{\tau} s_t)}  \succeq \mu I$. The rest follows similarly as in the proof of (a) and we prove \eqref{eq:g_t_1_vs_g_t_2}.
        
\end{enumerate}

\end{proof}

\section{Proof of Lemma~\ref{lemma_line_search}}

    Recall that $g_t = \nabla f(x_t)$. Given the condition in \eqref{sufficient_decrease} and the fact that $s_t = \eta_t d_t$, we have 
    $$
    f(x_{t + 1}) \leq f(x_t) +\alpha g_t^\top s_t.
    $$
    Moreover, since $B_t$ is symmetric positive definite, we have $-g_t^\top s_t =\eta_t g_t^\top B_t^{-1} g_t > 0$ (unless $g_t = 0$ and we are at the optimal solution). This further leads to the first claim, which is
    \begin{align*}
         \frac{f(x_t)-f(x_{t+1})}{-{g}_t^\top {s}_t} \geq \alpha.
    \end{align*}
    Similarly, the above argument implies that $\alpha g_t^\top s_t<0$ and as a result $f(x_{t + 1}) \leq f(x_t)$  and the last claim also follows. 
    
    To prove the second claim, we leverage the condition in \eqref{curvature_condition}. Specifically, if we subtract $g_t^\top d_t $ from both sides of that condition,  we obtain that 
    $$
    (g_{t+1}-g_t)^\top d_t \geq (\beta-1) g_t^\top d_t 
    $$
    Next, using the fact that $s_t = \eta_t d_t$, by multiplying both sides by $\eta_t$ and use the simplification $y_t=g_{t + 1}-g_t$ we obtain that 
    \begin{align*}
        {y}_t^\top {s}_t \geq (\beta - 1){g}_t^\top {s}_t = -{g}_t^\top {s}_t(1- \beta).
    \end{align*}
    Again using the argument that ${- {g}_t^\top {s}_t}$ is positive (if we are not at the optimal solution), we can divide both sides of the above inequality by ${- {g}_t^\top {s}_t}$, leading to the second claim.  

\section{Proof of Proposition~\ref{lemma_bound}}

    First, we note that $\hat{g}_t^\top\hat{s}_t = g_t^\top s_t$ and $\hat{y}_t^\top\hat{s}_t = y_t^\top s_t$. Using the definition of $\hat{p}_t$ in \eqref{eq:def_alpha_q_m}, we have that 
    \begin{equation}\label{eq:rewrite_alpha}
        f(x_t) - f(x_{t+1}) = \hat{p}_t \frac{-\hat{g}_t^\top  \hat{s}_t}{\|\hat{g}_t\|^2} \|\hat{g}_t\|^2.
    \end{equation}
    Hence, using the definition of $\hat{\theta}_t$ in \eqref{eq:theta} and the definition of $\hat{m}_t$, $\hat{n}_t$ in \eqref{eq:def_alpha_q_m}, it follows that 
    \begin{equation*}
        \frac{-\hat{g}_t^\top  \hat{s}_t}{\|\hat{g}_t\|^2} = \frac{(\hat{g}_t^\top  \hat{s}_t)^2}{\|\hat{g}_t\|^2\|\hat{s}_t\|^2} \frac{\|\hat{s}_t\|^2}{-\hat{g}_t^\top  \hat{s}_t} = \frac{(\hat{g}_t^\top  \hat{s}_t)^2}{\|\hat{g}_t\|^2\|\hat{s}_t\|^2} \frac{\|\hat{s}_t\|^2}{\hat{y}_t^\top  \hat{s}_t}\frac{\hat{y}_t^\top \hat{s}_t}{- \hat{g}_t^\top \hat{s}_t} = \hat{n}_t\frac{\cos^2(\hat{\theta}_t)}{\hat{m}_t}. 
    \end{equation*}
    Furthermore, we have $\|\hat{g}_t\|^2 = \hat{q}_t (f(x_t)-f(x_*))$ from the definition of $\hat{q}_t$ in \eqref{eq:def_alpha_q_m}. Thus, the equality in~\eqref{eq:rewrite_alpha} can be rewritten as
    \begin{align*}
        f(x_t) - f(x_{t+1}) = \hat{p}_t \hat{q}_{t} \hat{n}_t\frac{\cos^2(\hat{\theta}_{t})}{\hat{m}_{t}}(f(x_t)-f(x_*)).
    \end{align*}
    By rearranging the term in the above equality, we obtain 
    \begin{equation}\label{eq:one_step_bound}
        f(x_{t+1}) -f(x_*) = \Big(1 - \hat{p}_t \hat{q}_{t} \hat{n}_t\frac{\cos^2(\hat{\theta}_{t})}{\hat{m}_{t}} \Big)(f(x_{t})-f(x_*)),
            \vspace{-1mm}
    \end{equation}
    To prove the inequality in~\eqref{eq:product}, note that for any $t \geq 1$, we have 
    \begin{equation*}
        \frac{f(x_{t}) - f(x_*)}{f(x_0) - f(x_*)} = \prod_{i = 0}^{t - 1}\frac{f(x_{i + 1}) - f(x_*)}{f(x_i) - f(x_*)} = \prod_{i = 0}^{t - 1}\left(1 - \hat{p}_i \hat{q}_{i} \hat{n}_i\frac{\cos^2(\hat{\theta}_{i})}{\hat{m}_{i}}\right), 
    \end{equation*}
    where the last equality is due to \eqref{eq:one_step_bound}. Note that all the terms of the form $1 - \hat{p}_i \hat{q}_{i} \hat{n}_i\frac{\cos^2(\hat{\theta}_{i})}{\hat{m}_{i}}$ are non-negative, for any $i \geq 0$. Thus, by applying the inequality of arithmetic and geometric means twice, we obtain 
    \begin{equation*}
    \begin{aligned}
        & \prod_{i = 0}^{t - 1}\left(1 - \hat{p}_i \hat{q}_{i} \hat{n}_i\frac{\cos^2(\hat{\theta}_{i})}{\hat{m}_{i}}\right) \leq \left[\frac{1}{t}\sum_{i = 0}^{t - 1}\left(1 - \hat{p}_i \hat{q}_{i} \hat{n}_i\frac{\cos^2(\hat{\theta}_{i})}{\hat{m}_{i}}\right)\right]^{t} \\
        & = \left[1 - \frac{1}{t}\sum_{i = 0}^{t - 1}\hat{p}_i \hat{q}_{i} \hat{n}_i\frac{\cos^2(\hat{\theta}_{i})}{\hat{m}_{i}}\right]^{t} \leq \left[1 - \left(\prod_{i = 0}^{t - 1} \hat{p}_i \hat{q}_{i} \hat{n}_i\frac{\cos^2(\hat{\theta}_{i})}{\hat{m}_{i}}\right)^{\frac{1}{t}}\right]^{t}.
    \end{aligned}
    \end{equation*}
    This completes the proof.

\section{Results from \texorpdfstring{\cite{jin2024non}}{[38]}}\label{Results_borrowed}

In this section, we summarize some results that we use from \cite{jin2024non} to establish a lower bound on $\prod_{i=0}^{t - 1}\frac{\cos^2(\hat{\theta}_i)}{\hat{m}_i}$ and $\hat{q}_t$.

\begin{proposition}[{\cite[Proposition 2]{jin2024non}}]\label{lemma_BFGS}
Let $\{{B}_t\}_{t\geq 0}$ be the Hessian approximation matrices generated by the BFGS update in \eqref{eq:BFGS_update}. For a given weight matrix $P\in \mathbb{S}^d_{++}$, recall the weighted vectors defined in \eqref{weighted_vector} and the weighted matrix in \eqref{weighted_matrix}. Then, we have
    \begin{equation*}
        \Psi(\hat{B}_{t+1}) \leq \Psi(\hat{B}_t) + \frac{\|\hat{y}_t\|^2}{\hat{y}_t^\top \hat{s}_t} -1 + \log \frac{\cos^2\hat{\theta}_t}{\hat{m}_t}, \qquad \forall t \geq 0,
    \end{equation*}
    where $\hat{m}_t$ is defined in \eqref{eq:def_alpha_q_m} and $\cos(\hat{\theta}_t)$ is defined in \eqref{eq:theta}. As a corollary, we have,
\begin{equation}\label{eq:sum_of_logs}
        \sum_{i = 0}^{t - 1} \log{\frac{\cos^2(\hat{\theta}_i)}{\hat{m}_i}} \geq  - \Psi(\hat{B}_{0}) + \sum_{i = 0}^{t - 1}\left(1-\frac{\|\hat{y}_i\|^2}{\hat{y}_i^\top \hat{s}_i} \right), \qquad  \forall t \geq 1.
    \end{equation}
\end{proposition}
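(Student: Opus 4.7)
The plan is to reduce the statement to the elementary one-variable inequality $\log z \le z-1$ after expanding the per-step change of the trace and log-determinant induced by the weighted BFGS update \eqref{BFGS_weighted}. I would split $\Psi(\hat{B}_{t+1}) - \Psi(\hat{B}_t) = [\Tr(\hat{B}_{t+1}) - \Tr(\hat{B}_t)] - [\log\Det(\hat{B}_{t+1}) - \log\Det(\hat{B}_t)]$ and then reorganize the resulting expression so that the angle $\hat{\theta}_t$ and curvature ratio $\hat{m}_t$ appear.

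For the trace term, applying linearity and the cyclic property to \eqref{BFGS_weighted} directly gives
\[
\Tr(\hat{B}_{t+1}) - \Tr(\hat{B}_t) = -\frac{\|\hat{B}_t \hat{s}_t\|^2}{\hat{s}_t^\top \hat{B}_t \hat{s}_t} + \frac{\|\hat{y}_t\|^2}{\hat{y}_t^\top \hat{s}_t}.
\]
For the determinant, the two rank-one modifications must be handled together, because the intermediate matrix $\hat{B}_t - \hat{B}_t\hat{s}_t\hat{s}_t^\top\hat{B}_t/(\hat{s}_t^\top \hat{B}_t \hat{s}_t)$ is singular with $\hat{s}_t$ in its kernel. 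Writing $\hat{B}_{t+1} = \hat{B}_t + UV^\top$ with $U = [\hat{y}_t,\, \hat{B}_t\hat{s}_t]$ and a compatible $V$, and applying the Sylvester/matrix-determinant identity to the resulting $2\times 2$ block, I would recover the classical BFGS identity
\[
\Det(\hat{B}_{t+1}) = \Det(\hat{B}_t) \cdot \frac{\hat{y}_t^\top \hat{s}_t}{\hat{s}_t^\top \hat{B}_t \hat{s}_t}.
\]

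Next I would translate the quantity $\cos^2 \hat{\theta}_t$ into a form involving $\hat{B}_t$ and $\hat{s}_t$ only. From $d_t = -B_t^{-1} g_t$ and $s_t = \eta_t d_t$, the definitions \eqref{weighted_vector}--\eqref{weighted_matrix} yield $\hat{g}_t = -\hat{B}_t \hat{d}_t$ and $\hat{s}_t = \eta_t \hat{d}_t$. Substituting these into \eqref{eq:theta} produces the key identity
\[
\cos^2 \hat{\theta}_t = \frac{(\hat{s}_t^\top \hat{B}_t \hat{s}_t)^2}{\|\hat{B}_t \hat{s}_t\|^2\, \|\hat{s}_t\|^2}.
\]
Setting $z := \|\hat{B}_t \hat{s}_t\|^2/(\hat{s}_t^\top \hat{B}_t \hat{s}_t)$ and using $\hat{m}_t = \hat{y}_t^\top \hat{s}_t/\|\hat{s}_t\|^2$, the combined trace--log difference rearranges into
\[
\Psi(\hat{B}_{t+1}) - \Psi(\hat{B}_t) = (-z + \log z) + \frac{\|\hat{y}_t\|^2}{\hat{y}_t^\top \hat{s}_t} + \log \frac{\cos^2 \hat{\theta}_t}{\hat{m}_t}.
\]
Applying $\log z \le z - 1$ to the first parenthesized term yields the stated per-step bound. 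The corollary \eqref{eq:sum_of_logs} then follows by telescoping the inequality from $i=0$ to $i=t-1$, isolating the $\log(\cos^2\hat{\theta}_i/\hat{m}_i)$ sum, and discarding the nonnegative term $\Psi(\hat{B}_t)$ on the right-hand side.

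The main obstacle I anticipate is the determinant step: the update is not a single rank-one modification, and the natural ``one at a time'' approach fails because the intermediate matrix is rank-deficient. Writing the update as a coherent rank-two perturbation and executing the $2\times 2$ Sylvester determinant carefully is where the bookkeeping is most error-prone. Once that identity is in hand, everything else is either linear algebra that any textbook would carry out (trace, $\log$-$\Det$), or the short algebraic reduction that collapses the trace and log terms into $-z + \log z + \log(\cos^2\hat\theta_t/\hat m_t)$, closed out by the universal inequality $\log z \le z-1$.
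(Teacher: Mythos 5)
Your proposal is correct and follows the standard route: the paper imports this statement from \cite{jin2024non} without reproving it, but the identical machinery---the trace identity, the determinant identity $\Det(\hat{B}_{t+1}) = \Det(\hat{B}_t)\,\hat{y}_t^\top\hat{s}_t/(\hat{s}_t^\top\hat{B}_t\hat{s}_t)$, the identification $\cos^2\hat{\theta}_t = (\hat{s}_t^\top\hat{B}_t\hat{s}_t)^2/(\|\hat{B}_t\hat{s}_t\|^2\|\hat{s}_t\|^2)$ via $\hat{B}_t\hat{s}_t = -\eta_t\hat{g}_t$, and the final reduction to $\log z \le z-1$ with $z = \|\hat{B}_t\hat{s}_t\|^2/(\hat{s}_t^\top\hat{B}_t\hat{s}_t)$---appears verbatim in the paper's proof of Proposition~\ref{lemma_BFGS_superlinear}. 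Your handling of the determinant as a rank-two perturbation and the telescoping step discarding $\Psi(\hat{B}_t)\ge 0$ are both sound.
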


If we take exponentiation on both sides of the above inequality \eqref{eq:sum_of_logs} in Proposition~\ref{lemma_BFGS}, we can obtain a lower bound for the product $\prod_{i=0}^{t - 1}\frac{\cos^2(\hat{\theta}_i)}{\hat{m}_i}$ with the sum $\sum_{i=0}^{t - 1} \frac{\|\hat{y}_i\|^2}{\hat{s}_i^\top \hat{y}_i}$ and $\Psi(\hat{B}_0)$. This classical inequality describing the relationship between the ratio $\frac{\cos^2(\hat{\theta}_t)}{\hat{m}_t}$ and the potential function $\Psi(.)$ plays a critical role in the following convergence analysis.

In the following two lemmas, we provide bounds on the quantities $\hat{q}_t$ and ${\|\hat{y}_t\|^2}/{\hat{s}_t^\top \hat{y}_t}$ respectively by directly citing results from Lemma 4 and Lemma 5 in \cite{jin2024non} again. Notice that both $\hat{q}_t$ and ${\|\hat{y}_t\|^2}/{\hat{s}_t^\top \hat{y}_t}$ depend on different choices of the weight matrix $P$.

\begin{lemma}[{\cite[Lemma 4]{jin2024non}}]\label{lemma_eigenvalue}
    Recall the definition $\hat{q}_t = \frac{\|\hat{g}_t\|^2}{f(x_t)-f(x_*)}$ in \eqref{eq:def_alpha_q_m}. Suppose Assumptions~\ref{ass_str_cvx},~\ref{ass_smooth}, and \ref{ass_Hess_lip} hold. Then we have the following results: 
    \begin{enumerate}[(a)]
        \item If we choose $P = LI$, then $\hat{q}_t \geq 2/\kappa$. 
        \item If we choose $P = \nabla^2 f(x_*)$, then $\hat{q}_t \geq 2/(1+C_t)^2$. 
    \end{enumerate}
\end{lemma}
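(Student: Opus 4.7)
The plan is to prove the two parts separately, each by recasting $\hat{q}_t$ as a quadratic-form comparison.

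\textbf{Part (a).} With $P = LI$ we have $\|\hat{g}_t\|^2 = \|g_t\|^2 / L$, so the claim is equivalent to the Polyak-\L ojasiewicz inequality $\|g_t\|^2 \geq 2\mu (f(x_t) - f(x_*))$. I would derive this in one line from $\mu$-strong convexity (Assumption~\ref{ass_str_cvx}): minimizing the right-hand side of $f(y) \geq f(x_t) + g_t^\top (y - x_t) + \frac{\mu}{2}\|y - x_t\|^2$ over $y$ gives $f(x_*) \geq f(x_t) - \tfrac{1}{2\mu}\|g_t\|^2$, which is the PL bound after rearrangement. Dividing through by $L$ and using $\kappa = L/\mu$ yields $\hat{q}_t \geq 2/\kappa$.

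\textbf{Part (b).} With $P = \nabla^2 f(x_*)$ and $z_t := x_t - x_*$, the plan is to express both $g_t$ and $f(x_t) - f(x_*)$ as quadratic forms in $z_t$ via Hessian integrals. Using $\nabla f(x_*) = 0$, the fundamental theorem of calculus gives $g_t = G_t z_t$ with $G_t$ as in \eqref{def_J_G}; and Taylor's theorem with integral remainder gives $f(x_t) - f(x_*) = \tfrac{1}{2} z_t^\top \tilde{G}_t z_t$, where $\tilde{G}_t := 2\int_0^1 (1-r)\nabla^2 f(x_* + r z_t)\, dr$ is another weighted average of Hessians along the segment from $x_*$ to $x_t$. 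Lemma~\ref{lemma_Hessian} — specifically the bound \eqref{eq:g_t_vs_H*} for $G_t$, together with the same Hessian-Lipschitz argument applied to the weighted average $\tilde{G}_t$ — gives the simultaneous sandwich $\tfrac{1}{1+C_t}\nabla^2 f(x_*) \preceq G_t,\, \tilde{G}_t \preceq (1+C_t)\nabla^2 f(x_*)$.

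Introducing the congruent transforms $w := \nabla^2 f(x_*)^{1/2} z_t$, $A := \nabla^2 f(x_*)^{-1/2} G_t \nabla^2 f(x_*)^{-1/2}$, and $\tilde{A} := \nabla^2 f(x_*)^{-1/2} \tilde{G}_t \nabla^2 f(x_*)^{-1/2}$, the ratio rewrites as $\hat{q}_t = 2\, w^\top A^2 w / (w^\top \tilde{A} w)$, with both $A$ and $\tilde{A}$ having spectrum in $[(1+C_t)^{-1}, 1+C_t]$. The advertised lower bound $\hat{q}_t \geq 2/(1+C_t)^2$ then reduces to pure eigenvalue inequalities on these two matrices, which I would obtain by combining $\tilde{A} \preceq (1+C_t) I$, $A \succeq (1+C_t)^{-1} I$, and a matrix Cauchy-Schwarz step $w^\top A^2 w \geq (w^\top A w)^2/\|w\|^2$ in the correct order.

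The main obstacle is careful bookkeeping of the $(1+C_t)$ factors: a naive chain of three Hessian comparisons produces a spurious extra factor and gives only $\hat{q}_t \geq 2/(1+C_t)^3$. Obtaining the tight constant $(1+C_t)^2$ requires exploiting the fact that $G_t$ and $\tilde{G}_t$ are weighted averages of the \emph{same} Hessian path, so that $A$ and $\tilde{A}$ are both close to the identity \emph{simultaneously}, rather than bounding each factor separately through $\nabla^2 f(x_*)$ as an intermediary.
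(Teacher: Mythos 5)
First, a framing note: the paper does not prove this lemma itself; it is imported verbatim from \cite{jin2024non} (Lemma 4 there), so there is no in-paper proof to compare against line by line. Your part (a) is correct and is the standard argument: with $P=LI$ the claim is exactly the Polyak--{\L}ojasiewicz inequality $\|g_t\|^2\geq 2\mu\,(f(x_t)-f(x_*))$, obtained by minimizing both sides of the strong-convexity lower bound over $y$, then divided by $L$.

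Part (b) has a genuine gap, and it is precisely the one you flag at the end without resolving. Your setup ($g_t=G_tz_t$, $f(x_t)-f(x_*)=\tfrac12 z_t^\top\tilde G_t z_t$, the congruence by $\nabla^2 f(x_*)^{1/2}$) is fine, but the three ingredients you actually list --- $\tilde A\preceq(1+C_t)I$, $A\succeq(1+C_t)^{-1}I$, and $w^\top A^2w\geq(w^\top Aw)^2/\|w\|^2$ --- yield only $\hat q_t\geq 2/(1+C_t)^3$ in whatever order they are combined: the Cauchy--Schwarz step gives $(w^\top Aw)^2/\|w\|^2\geq\|w\|^2/(1+C_t)^2$, which is no better than $A^2\succeq(1+C_t)^{-2}I$, and the denominator $w^\top\tilde Aw\leq(1+C_t)\|w\|^2$ still costs a third factor. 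The phrase ``both close to the identity simultaneously'' describes exactly the naive bound and does not save a factor. What does save it is a comparison of $\tilde G_t$ to $G_t$ \emph{directly}: both are averages of the Hessian over the same segment, so $\tilde G_t\preceq(1+C_t)G_t$, i.e.\ $\tilde A\preceq(1+C_t)A$, at the cost of a single factor; then $w^\top\tilde Aw\leq(1+C_t)\,w^\top Aw\leq(1+C_t)\|w\|\,(w^\top A^2w)^{1/2}\leq(1+C_t)^2\,w^\top A^2w$, using $\|w\|\leq(1+C_t)(w^\top A^2w)^{1/2}$ in the last step, which gives the claimed $2/(1+C_t)^2$. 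Equivalently --- and this is the route the present paper takes for the mirror-image \emph{upper} bound in the proof of Lemma~\ref{lemma_decrease}, see \eqref{lemma_decrease_4}--\eqref{lemma_decrease_8} --- one writes $f(x_t)-f(x_*)=\tfrac12 g_t^\top G_t^{-1}\nabla^2 f(x_t+\tilde\tau_t(x_*-x_t))G_t^{-1}g_t$ via the Lagrange remainder and bounds the middle Hessian against $G_t$ using \eqref{eq:middle_hessian_vs_g_t} (one factor) and then $G_t^{-1}$ against $\nabla^2 f(x_*)^{-1}$ using \eqref{eq:g_t_vs_H*} (a second factor); only two Hessian comparisons ever occur, so the square appears automatically. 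Without one of these two closing moves your plan does not reach the stated constant.
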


\begin{lemma}[{{\cite[Lemma 5]{jin2024non}}}]\label{lem:y^2/sy}
    Let $\{x_{t}\}_{t \geq 0}$ be the iterates generated by the BFGS algorithm with inexact line search satisfying \eqref{sufficient_decrease} and \eqref{curvature_condition}. Suppose Assumptions~\ref{ass_str_cvx},~\ref{ass_smooth}, and \ref{ass_Hess_lip} hold. Then we have the following results: 
    \begin{enumerate}[(a)]
        \item If we choose $P = LI$, then $ \frac{\|\hat{y}_t\|^2}{\hat{s}_t^\top \hat{y}_t} \leq 1$. 
        \item If we choose $P = \nabla^2 f(x_*)$, then $\frac{\|\hat{y}_t\|^2}{\hat{s}_t^\top \hat{y}_t} \leq 1+C_t$. 
    \end{enumerate}
\end{lemma}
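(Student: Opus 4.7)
\textbf{Proof Proposal for Lemma~\ref{lem:y^2/sy}.}
The starting point is to re-express the ratio $\|\hat{y}_t\|^2/\hat{s}_t^\top \hat{y}_t$ in unweighted form. Using the definitions $\hat{y}_t = P^{-1/2} y_t$ and $\hat{s}_t = P^{1/2} s_t$, we have $\|\hat{y}_t\|^2 = y_t^\top P^{-1} y_t$ and $\hat{s}_t^\top\hat{y}_t = s_t^\top y_t$. Next I would represent $y_t = \nabla f(x_{t+1}) - \nabla f(x_t)$ through the average Hessian $J_t = \int_0^1 \nabla^2 f(x_t + \tau s_t)\, d\tau$ from \eqref{def_J_G}, giving the fundamental identity $y_t = J_t s_t$. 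Substituting this in, the ratio becomes
\begin{equation*}
    \frac{\|\hat{y}_t\|^2}{\hat{s}_t^\top \hat{y}_t} = \frac{s_t^\top J_t P^{-1} J_t s_t}{s_t^\top J_t s_t}.
\end{equation*}
This reduces both parts of the lemma to a deterministic matrix comparison between $J_t$ and $P$.

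For part (a), with $P = LI$ the ratio equals $s_t^\top J_t^2 s_t/(L\, s_t^\top J_t s_t)$. Assumption~\ref{ass_smooth} gives $\nabla^2 f(x) \preceq LI$ for all $x$, hence $J_t \preceq LI$. The natural manipulation is to set $u := J_t^{1/2} s_t$ (which is well-defined since $J_t \succ 0$ under Assumption~\ref{ass_str_cvx}); then $s_t^\top J_t^2 s_t = u^\top J_t u \leq L \|u\|^2 = L\, s_t^\top J_t s_t$, which yields the bound $\leq 1$.

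For part (b), with $P = \nabla^2 f(x_*)$ I would invoke Lemma~\ref{lemma_Hessian}(a), which (provided $f(x_{t+1}) \leq f(x_t)$, itself guaranteed by Lemma~\ref{lemma_line_search} under the Armijo-Wolfe conditions) yields $J_t \preceq (1+C_t)\nabla^2 f(x_*)$. Applying operator inversion (which reverses the Loewner order) gives $\nabla^2 f(x_*)^{-1} \preceq (1+C_t) J_t^{-1}$. Sandwiching this between $J_t$ on both sides yields $J_t \nabla^2 f(x_*)^{-1} J_t \preceq (1+C_t) J_t$, and the desired inequality follows immediately after dividing by $s_t^\top J_t s_t$.

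The only point requiring care is the order-reversal step in part (b): operator squaring is not order-preserving in general, so one must route the argument through the inverse rather than through $J_t \preceq (1+C_t)\nabla^2 f(x_*)$ directly. Beyond that, the argument is purely algebraic and the main ingredients—the identity $y_t = J_t s_t$, the smoothness bound $J_t \preceq LI$, and the Hessian-proximity bound from Lemma~\ref{lemma_Hessian}—are all readily available, so I do not anticipate a substantive obstacle.
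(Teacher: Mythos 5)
Your proof is correct. Note that the paper itself does not prove this lemma --- it is imported verbatim as Lemma 5 of \cite{jin2024non} --- so there is no in-paper argument to compare against; but your derivation is exactly the standard one: reduce to $\frac{s_t^\top J_t P^{-1} J_t s_t}{s_t^\top J_t s_t}$ via $y_t = J_t s_t$, then bound $J_t$ against $P$ using smoothness for (a) and Lemma~\ref{lemma_Hessian}(a) (with $f(x_{t+1})\leq f(x_t)$ supplied by Lemma~\ref{lemma_line_search}) for (b). One small simplification for part (b): rather than detouring through $J_t^{-1}$, you can treat both parts uniformly by writing $\hat{y}_t = \hat{J}_t \hat{s}_t$ with $\hat{J}_t = P^{-1/2} J_t P^{-1/2}$ and $u = \hat{J}_t^{1/2}\hat{s}_t$, so that the ratio equals $u^\top \hat{J}_t u / \|u\|^2 \leq \lambda_{\max}(\hat{J}_t)$; then $J_t \preceq LI$ gives $\hat{J}_t \preceq I$ in case (a) and $J_t \preceq (1+C_t)\nabla^2 f(x_*)$ gives $\hat{J}_t \preceq (1+C_t)I$ in case (b), which is the same Rayleigh-quotient trick you already used for (a). Your congruence-through-the-inverse step is nonetheless valid, and your caution about operator squaring not being order-preserving is well placed.
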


\section{Proofs in Section~\ref{sec:linear}}

\subsection{Proof of Theorem~\ref{thm:first_linear_rate}}

    Recall that we choose $P = LI$ throughout the proof. Note that given this weight matrix, it can be easily verified that $ \frac{\|\hat{y}_t\|^2}{\hat{s}_t^\top \hat{y}_t} \leq 1$ for any $t \geq 0$ by using Lemma~\ref{lem:y^2/sy} (a). Hence, we use \eqref{eq:sum_of_logs} in Proposition~\ref{lemma_BFGS} to obtain
    \begin{equation*}
        \sum_{i = 0}^{t - 1} \log{\frac{\cos^2(\hat{\theta}_i)}{\hat{m}_i}} \geq  - \Psi(\bar{B}_{0}) + \sum_{i = 0}^{t - 1}\left(1-\frac{\|\hat{y}_i\|^2}{\hat{s}_i^\top \hat{y}_i} \right) \geq - \Psi(\bar{B}_{0}),
    \end{equation*}
    which further implies that 
    \begin{equation*}
    \qquad \prod_{i = 0}^{t-1} \frac{\cos^2(\hat{\theta}_i)}{\hat{m}_i} \geq e^{-\Psi(\bar{B}_{0})}.
    \end{equation*}
    Moreover, for the choice $P = LI$, it can be shown that $\hat{q}_t = \frac{\|g_t\|^2}{L(f(x_t)-f(x_*))} \geq \frac{2}{\kappa}$ by using Lemma~\ref{lemma_eigenvalue} (a). From Lemma~\ref{lemma_line_search}, we know $\hat{p}_t \geq \alpha$ and $\hat{n}_t\geq 1 - \beta$, which lead to
    \begin{equation*}
        \prod_{i=0}^{t-1} \frac{\hat{p}_i \hat{n}_i \hat{q}_i}{\hat{m}_i} \cos^2(\hat{\theta}_i) \geq \prod_{i=0}^{t-1} \hat{p}_i \prod_{i=0}^{t-1} \hat{q}_i \prod_{i=0}^{t-1} \hat{n}_i \prod_{i=0}^{t-1} \frac{\cos^2(\hat{\theta}_i)}{\hat{m}_i} \geq \left(\frac{2\alpha(1 - \beta)}{\kappa}\right)^t  e^{-\Psi(\bar{B}_{0})}.
    \end{equation*}
    Thus, it follows from Proposition~\ref{lemma_bound} that 
    \begin{align*}
        \frac{f(x_{t}) - f(x_*)}{f(x_0) - f(x_*)} \leq \left[1 - \left(\prod_{i = 0}^{t-1} \frac{\hat{p}_i\hat{q}_i\hat{n}_i}{\hat{m}_i} \cos^2(\hat{\theta}_i)\right)^{\frac{1}{t}}\right]^{t} \leq \left(1 - e^{-\frac{\Psi(\bar{B}_0)}{t}}\frac{2\alpha(1 - \beta)}{\kappa}\right)^{t}. 
    \end{align*}
    This completes the proof. 

\subsection{Proof of Corollary~\ref{coro:first_linear_rate}}

Notice that in the first case where $B_0 = LI$,  we have $\Psi(\bar{B}_0) = 0$ and thus it achieves the best linear convergence results according to Theorem~\ref{thm:first_linear_rate}. On the other hand, for $B_0 = \mu I$, we have $\Psi(\bar{B}_0) = \Psi(\frac{\mu}{L} I)  = d(\frac{1}{\kappa} - 1 + \log{\kappa}) \leq d\log \kappa$. We complete the proof by combining these conditions with the inequality \eqref{eq:first_linear_rate} in Theorem~\ref{thm:first_linear_rate}. Notice that $e^{-x} \geq e^{-1} \geq \frac{1}{3}$ for $x \leq 1$.

\section{Proofs in Section~\ref{sec:cond_ind_rate}}

\subsection{Proof of Proposition~\ref{prop:second_linear_rate}}

    Recall that we choose the weight matrix as $P = \nabla^2 f(x_*)$ throughout the proof. Similar to the proof of Theorem~\ref{thm:first_linear_rate}, we start from the key inequality in \eqref{eq:product}, but we apply different bounds on the $\hat{q}_t$ and $\frac{\cos^2(\hat{\theta}_t)}{\hat{m}_t}$. Specifically, by using Lemma~\ref{lem:y^2/sy} (b), we have $\frac{\|\hat{y}_t\|^2}{\hat{s}_t^\top \hat{y}_t} \leq 1 + C_t$ for any $t \geq 0$. Hence, we use \eqref{eq:sum_of_logs} in Proposition~\ref{lemma_BFGS} to obtain
    \begin{equation*}
        \sum_{i = 0}^{t - 1} \log{\frac{\cos^2(\hat{\theta}_i)}{\hat{m}_i}} \geq  - \Psi(\Tilde{B}_{0}) + \sum_{i = 0}^{t - 1}\left(1-\frac{\|\hat{y}_i\|^2}{\hat{s}_i^\top \hat{y}_i} \right) \geq - \Psi(\Tilde{B}_{0}) - \sum_{i = 0}^{t - 1}C_i,
    \end{equation*}
    which further implies that
    \begin{equation}\label{eq:prodcut_cos}
        \prod_{i = 0}^{t - 1} \frac{\cos^2(\hat{\theta}_i)}{\hat{m}_i} \geq e^{- \Psi(\Tilde{B}_{0}) - \sum_{i = 0}^{t - 1}C_i}.
    \end{equation}
    Moreover, since $\hat{q}_t \geq \frac{2}{(1 + C_t)^2}$ for any $t\geq 0$ by using Lemma~\ref{lemma_eigenvalue} (b), we get 
    \begin{equation}\label{eq:product_alpha_q}
        \prod_{i = 0}^{t - 1} \hat{q}_i \geq \prod_{i = 0}^{t - 1} \frac{2}{(1+C_i)^2} \geq 2^t\prod_{i = 0}^{t - 1} e^{-2C_i} = 2^te^{-2\sum_{i = 0}^{t - 1}C_i},
    \end{equation}
    where we use the inequality $1+x \leq e^x$ for any $x \in \mathbb{R}$. From Lemma~\ref{lemma_line_search}, we know $\hat{p}_t \geq \alpha$ and $\hat{n}_t\geq 1 - \beta$, which lead to
    \begin{equation}\label{eq:product_kappa}
        \prod_{i = 0}^{t - 1} \hat{p}_i\hat{n}_i \geq \alpha^{t} (1 - \beta)^{t}.
    \end{equation}
    Combining \eqref{eq:prodcut_cos}, \eqref{eq:product_alpha_q}, \eqref{eq:product_kappa} and \eqref{eq:product} from Proposition~\ref{lemma_bound}, we prove that
    \begin{align*}
        \frac{f(x_{t}) - f(x_*)}{f(x_0) - f(x_*)} \leq \left[1 - \left(\prod_{i = 0}^{t-1} \frac{\hat{p}_i\hat{q}_i\hat{n}_i}{\hat{m}_i} \cos^2(\hat{\theta}_i)\right)^{\frac{1}{t}}\right]^{t} \leq \left(1 - 2\alpha(1 - \beta)e^{- \frac{\Psi(\Tilde{B}_{0}) + 3\sum_{i = 0}^{t - 1}C_i}{t}} \right)^{t}.
    \end{align*}
    This completes the proof.

\subsection{Proof of Theorem~\ref{thm:second_linear_rate}}\label{thm:second_linear_rate_proof}

    When we have $t\geq \Psi(\Tilde{B}_{0}) + 3\sum_{i = 0}^{t - 1}C_i$, Proposition~\ref{prop:second_linear_rate} implies the condition that $\frac{f(x_t) - f(x_*)}{f(x_0) - f(x_*)} \leq \left(1 -\frac{2\alpha(1 - \beta)}{e} \right)^{t}\leq \left(1 - \frac{2\alpha(1 - \beta)}{3} \right)^{t}$, which leads to the linear rate in \eqref{eq:second_linear_rate}. Hence, it is sufficient to establish an upper bound on $\sum_{i = 0}^{t - 1}C_i$. Recall that $C_i = \frac{M}{\mu^{\frac{3}{2}}}\sqrt{2(f(x_i) - f(x_*))}$ defined in \eqref{distance}. We decompose the sum into two parts: $\sum_{i=0}^{\ceil{\Psi(\bar{B}_0)}-1} C_i$ and $\sum_{i=\ceil{\Psi(\bar{B}_0)}}^t C_i$. For the first part, note that since $f(x_{i+1}) \leq f(x_i)$ by Lemma~\ref{lemma_line_search}, we also have $C_{i+1} \leq C_i$ for $i \geq 0$. Hence, we have $\sum_{i=0}^{\ceil{\Psi(\bar{B}_0)}-1} C_i \leq C_0 \ceil{\Psi(\bar{B}_0)} \leq C_0 (\Psi(\bar{B}_0)+1)$. Moreover, by Theorem~\ref{thm:first_linear_rate}, when $t\geq \Psi(\bar{{B}}_0)$ we have 
    \begin{equation*}
        \frac{f(x_t) - f(x_*)}{f(x_0) - f(x_*)} \leq \left(1 - e^{-\frac{\Psi(\bar{B}_0)}{t}}\frac{2\alpha(1 - \beta)}{\kappa}\right)^{t} \leq \left(1 - \frac{2\alpha(1 - \beta)}{e\kappa}\right)^{t} \leq \left(1 - \frac{2\alpha(1 - \beta)}{3\kappa}\right)^{t}.
    \end{equation*}
    Hence, this further implies that 
    \begin{align*}
        \sum_{i=\ceil{\Psi(\bar{B}_0)}}^t C_i &= C_0 \sum_{i=\ceil{\Psi(\bar{B}_0)}}^t \sqrt{\frac{f(x_i)-f(x_*)}{f(x_0)-f(x_*)}} \leq C_0 \sum_{i=\ceil{\Psi(\bar{B}_0)}}^t \left(1 - \frac{2\alpha(1 - \beta)}{3\kappa}\right)^{\frac{i}{2}} \\
        & \leq C_0  \sum_{i=1}^{\infty} \left(1 - \frac{2\alpha(1 - \beta)}{3\kappa}\right)^{\frac{i}{2}} \leq C_0 \left(\frac{3\kappa}{\alpha(1-\beta)}-1\right),
    \end{align*}
    where we used the fact that $\sum_{i=1}^{\infty}(1-\rho)^{\frac{i}{2}} = \frac{\sqrt{1-\rho}}{1-\sqrt{1-\rho}} = \frac{\sqrt{1-\rho}+1-\rho}{\rho}\leq \frac{2}{\rho}-1$ for any $\rho \in (0,1)$. Hence, by combining both inequalities, we have 
    \begin{equation}\label{sum_of_C_t}
        \sum_{i = 0}^{t - 1}C_i = \sum_{i=0}^{\ceil{\Psi(\bar{B}_0)}-1} C_i + \sum_{i=\ceil{\Psi(\bar{B}_0)}}^t C_i \leq C_0 \Psi(\bar{B}_{0})+ \frac{3C_0\kappa}{\alpha(1 - \beta)}. 
    \end{equation}
    Hence, this proves that \eqref{eq:second_linear_rate} is satisfied when $t\geq \Psi(\Tilde{B}_{0}) + 3C_0 \Psi(\bar{B}_{0})+ \frac{9C_0\kappa}{\alpha(1 - \beta)}$. 

\subsection{Proof of Corollary~\ref{coro:second_linear_rate}}

For $B_0 = LI$, we have $\bar{B}_0 =  \frac{1}{L} B_0 = I$ and $\tilde{B}_0 = \nabla^2 f(x_*)^{-\frac{1}{2}} B_0 \nabla^2 f(x_*)^{-\frac{1}{2}} = L\nabla^2{f(x_*)}^{-1}$. Thus, it holds that $\Psi(\bar{B}_0) = \Psi(I) = 0$. Moreover, by Assumptions~\ref{ass_str_cvx} and \ref{ass_smooth}, we have $\frac{1}{L}I \preceq \nabla^2{f(x_*)}^{-1} \preceq \frac{1}{\mu} I$, which implies that $I \preceq  \tilde{B}_0 \preceq \kappa I$.  Thus, we further have 
    \begin{equation*}
        \Psi(\Tilde{B}_0) \leq \Tr(\kappa I) - d - \log{\Det(I)} = d\kappa - d \leq d\kappa.
    \end{equation*}
    Combining these two results, the threshold for transition time  can be bounded by $\Psi(\Tilde{B}_{0}) + 3C_0\Psi(\bar{B}_0) + \frac{9}{\alpha(1 - \beta)}C_0 \kappa \leq d\kappa + \frac{9}{\alpha(1 - \beta)}C_0 \kappa$. Hence, by Theorem~\ref{thm:second_linear_rate}, the linear rate in \eqref{eq:second_linear_rate} is achieved when $t \geq d\kappa + \frac{9}{\alpha(1 - \beta)}C_0 \kappa$.

    For $B_0 = \mu I$, we have $\bar{B}_0 =  \frac{1}{L} B_0 = \frac{1}{\kappa}I$ and $\tilde{B}_0 = \nabla^2 f(x_*)^{-\frac{1}{2}} B_0 \nabla^2 f(x_*)^{-\frac{1}{2}} = \mu\nabla^2{f(x_*)}^{-1}$. Thus, it holds that $\Psi(\bar{B}_0) = \Psi(\frac{1}{\kappa}I) = \frac{d}{\kappa}-d+d\log\kappa \leq d\log \kappa$. Moreover, by Assumptions~\ref{ass_str_cvx} and \ref{ass_smooth}, we have $ \frac{1}{\kappa}I \preceq  \tilde{B}_0 \preceq I$. This implies that 
    \begin{equation*}
        \Psi(\Tilde{B}_0) = \Tr(\Tilde{B}_0) - d - \log{\Det(\Tilde{B}_0)} \leq \Tr(I) - d - \log{\Det(\frac{1}{\kappa}I}) = d\log{\kappa}.
    \end{equation*}
    Combining these two results, the threshold for the transition tume can be bounded by $\Psi(\Tilde{B}_{0}) + 3C_0\Psi(\bar{B}_0) + \frac{9}{\alpha(1 - \beta)}C_0 \kappa \leq (1 + 3C_0)d\log{\kappa} + \frac{9}{\alpha(1 - \beta)}C_0 \kappa$. Hence, by Theorem~\ref{thm:second_linear_rate}, the linear rate in \eqref{eq:second_linear_rate} is satisfied when $t \geq (1 + 3C_0)d\log{\kappa} + \frac{9}{\alpha(1 - \beta)}C_0 \kappa$. 

\section{Intermediate Results and Proofs in Section~\ref{sec:superlinear}}

\subsection{Intermediate Results}

To present our result we first introduce the following function \begin{equation}\label{definition_omega}
    \omega(x) := x - \log{(x + 1)}, 
\end{equation}
which is defined for $ x > -1$. Further   
In the next result, we present some basic properties of the function $\omega(x)$ defined in \eqref{definition_omega}.

\begin{lemma}\label{lemma_omega}
Recall the definition of function $\omega(x)$ in \eqref{definition_omega}, we have that
\begin{enumerate}[(a)]
    \item $\omega(x)$ is increasing function for $x > 0$ and decreasing function for $-1 < x < 0$. Moreover, $\omega(x) \geq 0$ for all $x > -1$.
    \item When $x \geq 0$, we have that $\omega(x) \geq \frac{x^2}{2(1 + x)}$. 
    \item When $-1 < x \leq 0$, we have that $\omega(x) \geq \frac{x^2}{2 + x}$.
\end{enumerate}
\end{lemma}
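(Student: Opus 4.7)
The proof proposal is to rely on elementary single-variable calculus throughout, since $\omega(x) = x - \log(x+1)$ is a smooth function on $(-1,\infty)$ and the three claims all reduce to sign analyses of derivatives.

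For part (a), I would begin by computing $\omega'(x) = 1 - \frac{1}{x+1} = \frac{x}{x+1}$. Since $x+1 > 0$ on the entire domain, the sign of $\omega'(x)$ coincides with the sign of $x$, yielding $\omega' > 0$ on $(0,\infty)$ and $\omega' < 0$ on $(-1,0)$. Thus $\omega$ is strictly decreasing on $(-1,0)$ and strictly increasing on $(0,\infty)$, so it attains its global minimum on $(-1,\infty)$ at $x=0$. Since $\omega(0) = 0 - \log 1 = 0$, the nonnegativity $\omega(x) \geq 0$ follows immediately.

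For parts (b) and (c), I would use the same template: define the difference $h(x) := \omega(x) - \frac{x^2}{2(1+x)}$ (respectively $k(x) := \omega(x) - \frac{x^2}{2+x}$), check that $h(0) = k(0) = 0$, and then show that the sign of the derivative pushes the difference in the direction that gives the desired inequality. A direct computation using the formula $\omega'(x) = \frac{x}{x+1}$ should give
\begin{equation*}
h'(x) = \frac{x}{x+1} - \frac{x(2+x)}{2(1+x)^2} = \frac{x^2}{2(1+x)^2} \geq 0,
\end{equation*}
so $h$ is nondecreasing on $[0,\infty)$ and hence $h(x) \geq h(0) = 0$ for $x \geq 0$, proving (b). Analogously, for (c) a similar consolidation of fractions should yield
\begin{equation*}
k'(x) = \frac{x}{x+1} - \frac{x(4+x)}{(2+x)^2} = \frac{-x^2}{(x+1)(2+x)^2} \leq 0
\end{equation*}
on $(-1,0]$, so $k$ is nonincreasing there and thus $k(x) \geq k(0) = 0$ for $-1 < x \leq 0$, proving (c).

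There is no serious obstacle here; the whole argument is calculus bookkeeping. The one place to be careful is the algebraic simplification of $h'$ and $k'$, since a sign error or a missed cancellation would defeat the argument. To guard against this, I would expand each derivative over a common denominator explicitly and verify that the numerators simplify to $x^2$ and $-x^2$ respectively. Once those two identities are confirmed, the three claims follow by the monotonicity arguments above combined with the boundary value $\omega(0) = 0$.
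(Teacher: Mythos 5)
Your proposal is correct and takes essentially the same approach as the paper: both arguments reduce each part to a sign analysis of a derivative anchored at the value $0$ at $x=0$. The only cosmetic difference is that the paper clears denominators before differentiating (working with $\omega_1(x) = 2(1+x)\omega(x) - x^2$ and $\omega_2(x) = (2+x)\omega(x) - x^2$, which gives the tidy identity $\omega_1'(x) = 2\omega(x)$ and lets part (a) be reused), whereas you differentiate the differences directly; your claimed simplifications $h'(x) = \frac{x^2}{2(1+x)^2}$ and $k'(x) = \frac{-x^2}{(x+1)(2+x)^2}$ are correct.
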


\begin{proof}
    Notice that $\omega'(x) = \frac{x}{1 + x}$, we know that when $x > 0$, $\omega'(x) > 0$ and when $-1 < x < 0$, $\omega'(x) < 0$, $\omega'(x) < 0$. Therefore, $\omega(x)$ is increasing function for $x > 0$ and $\omega(x)$ is decreasing function for $-1 < x < 0$. Hence, $\omega(x) \geq \omega(0) = 0$ for all $x > -1$.
    
    $\omega(x) \geq \frac{x^2}{2(1 + x)}$ is equivalent to $\omega_{1}(x) := 2(1 + x)\omega(x) - x^2 \geq 0$. Since $\omega_{1}'(x) = 2x - 2\log{(1 + x)} = 2\omega(x) \geq 0$ for all $x > -1$, we know that $\omega_{1}(x)$ is increasing function for $x > -1$ and hence, $\omega_{1}(x) \geq \omega_1(0) = 0$ for $x \geq 0$.

    $\omega(x) \geq \frac{x^2}{2 + x}$ is equivalent to $\omega_{2}(x) := (2 + x)\omega(x) - x^2 \geq 0$. Since $\omega_{2}'(x) = \frac{x}{1 + x} - \log{(1 + x)} \leq 0$ for all $x > -1$, we know that $\omega_{2}(x)$ is decreasing function for $x > -1$ and hence, $\omega_{2}(x) \geq \omega_2(0) = 0$ for $x \leq 0$.
\end{proof}

\begin{proposition}\label{lemma_BFGS_superlinear}
Let $\{{B}_t\}_{t\geq 0}$ be the Hessian approximation matrices generated by the BFGS update in \eqref{eq:BFGS_update}. Suppose Assumptions~\ref{ass_str_cvx}, \ref{ass_smooth}, and \ref{ass_Hess_lip} hold and $f(x_{t + 1}) \leq f(x_t)$ for any $t \geq 0$. Recall the definition of $\Psi(.)$ in \eqref{potential_function} and $C_t$ in \eqref{distance}, we have that
\begin{equation}\label{eq:omega}
    \sum_{i = 0}^{t - 1} \omega(\rho_i - 1) \leq  \Psi(\tilde{B}_{0}) + 2\sum_{i = 0}^{t - 1}C_i, \qquad  \forall t \geq 1,
\end{equation}
\end{proposition}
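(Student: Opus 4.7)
The plan is to specialize the weighted-BFGS bookkeeping behind Proposition~\ref{lemma_BFGS} to the weight matrix $P = \nabla^2 f(x_*)$ (so that $\hat{B}_t = \tilde{B}_t$), but keep the one-step identity for $\Psi$ in a form that exposes the Rayleigh quotient $\rho_t$, rather than collapsing information into the $\log(\cos^2 \hat{\theta}_t / \hat{m}_t)$ term as in the proof of Proposition~\ref{lemma_BFGS}.

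\textbf{Step 1 (one-step identity).} Taking the trace and log-determinant of~\eqref{BFGS_weighted} gives the exact identity
\begin{equation*}
    \Psi(\hat{B}_{t+1}) - \Psi(\hat{B}_t) = -\frac{\hat{s}_t^\top \hat{B}_t^2 \hat{s}_t}{\hat{s}_t^\top \hat{B}_t \hat{s}_t} + \frac{\|\hat{y}_t\|^2}{\hat{s}_t^\top \hat{y}_t} - \log\frac{\hat{s}_t^\top \hat{y}_t}{\hat{s}_t^\top \hat{B}_t \hat{s}_t}.
\end{equation*}
The key observation is that $\rho_t$ is precisely the Rayleigh quotient of $\hat{B}_t$ in the direction $\hat{s}_t$: since $s_t = \eta_t d_t$ and $B_t d_t = -g_t$, we have $\hat{s}_t^\top \hat{B}_t \hat{s}_t = \eta_t^2 (-g_t^\top d_t)$ and $\|\hat{s}_t\|^2 = \eta_t^2 \|\tilde{d}_t\|^2$, so $\rho_t = \hat{s}_t^\top \hat{B}_t \hat{s}_t / \|\hat{s}_t\|^2$. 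Applying the Cauchy--Schwarz inequality to the pair $\hat{s}_t$ and $\hat{B}_t \hat{s}_t$ yields $\hat{s}_t^\top \hat{B}_t^2 \hat{s}_t / \hat{s}_t^\top \hat{B}_t \hat{s}_t \geq \rho_t$, and by definition $\hat{s}_t^\top \hat{y}_t / \hat{s}_t^\top \hat{B}_t \hat{s}_t = \hat{m}_t / \rho_t$. Substituting into the identity and rearranging produces
\begin{equation*}
    \omega(\rho_t - 1) \;\leq\; \Psi(\hat{B}_t) - \Psi(\hat{B}_{t+1}) + \Bigl(\frac{\|\hat{y}_t\|^2}{\hat{s}_t^\top \hat{y}_t} - 1\Bigr) - \log \hat{m}_t,
\end{equation*}
whose left-hand side is exactly $(\rho_t - 1) - \log \rho_t$.

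\textbf{Step 2 (perturbation estimates and telescoping).} For $P = \nabla^2 f(x_*)$, Lemma~\ref{lem:y^2/sy}(b) gives $\|\hat{y}_t\|^2/\hat{s}_t^\top \hat{y}_t - 1 \leq C_t$. Writing $y_t = J_t s_t$ via the fundamental theorem of calculus, we have $\hat{m}_t = s_t^\top J_t s_t / s_t^\top \nabla^2 f(x_*) s_t$; the hypothesis $f(x_{t+1}) \leq f(x_t)$ lets us invoke Lemma~\ref{lemma_Hessian}(a) to conclude $\hat{m}_t \geq 1/(1+C_t)$, so $-\log \hat{m}_t \leq \log(1+C_t) \leq C_t$. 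Combining these two bounds in the single-iteration inequality above yields $\omega(\rho_t - 1) \leq \Psi(\hat{B}_t) - \Psi(\hat{B}_{t+1}) + 2 C_t$. Summing from $i = 0$ to $t - 1$ telescopes the $\Psi$-differences, and using $\Psi(\hat{B}_t) \geq 0$ together with $\hat{B}_0 = \tilde{B}_0$ under our choice of $P$ gives exactly~\eqref{eq:omega}.

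\textbf{Main obstacle.} The only step beyond routine bookkeeping is recognizing that the Cauchy--Schwarz bound $\hat{s}_t^\top \hat{B}_t^2 \hat{s}_t / \hat{s}_t^\top \hat{B}_t \hat{s}_t \geq \rho_t$ is precisely what extracts $\omega(\rho_t - 1)$ on the left; this is where the argument diverges from the derivation of Proposition~\ref{lemma_BFGS}, which discards this term via a weaker eigenvalue bound and therefore only controls $\log(\cos^2 \hat{\theta}_t / \hat{m}_t)$. Everything else reduces to the one-step $\Psi$ identity and the $C_t$-perturbation estimates already established in Lemmas~\ref{lemma_Hessian} and~\ref{lem:y^2/sy}.
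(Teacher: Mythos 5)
Your proposal is correct and follows essentially the same route as the paper's proof: the same trace/log-determinant one-step identity for $\Psi(\hat{B}_t)$ with $P=\nabla^2 f(x_*)$, the same Cauchy--Schwarz step (your pair $(\hat{s}_t,\hat{B}_t\hat{s}_t)$ is the paper's pair $(\hat{d}_t,\hat{g}_t)$ up to the scaling $\hat{s}_t=\eta_t\hat{d}_t$, $\hat{B}_t\hat{s}_t=-\eta_t\hat{g}_t$) to extract $\omega(\rho_t-1)$, and the same two $C_t$-perturbation bounds from Lemma~\ref{lem:y^2/sy}(b) and Lemma~\ref{lemma_Hessian}(a) before telescoping. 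The only difference is notational (you phrase the residual log term as $-\log\hat{m}_t$ where the paper writes $\log(\|\hat{s}_t\|^2/\hat{s}_t^\top\hat{y}_t)$), so there is nothing substantive to flag.
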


\begin{proof}
First, taking the trace and determinant on both sides of the equation \eqref{BFGS_weighted} for any weight matrix $P \in \mathbb{S}_{++}^{d}$ and using results from Lemma 6.2 of \cite{rodomanov2020rates}, we show that 
\begin{align*}
    \mathbf{Tr}(\hat{B}_{t+ 1}) = \mathbf{Tr}(\hat{B}_{t}) - \frac{\|\hat{B}_t \hat{s}_t\|^2}{\hat{s}_t^\top \hat{B}_t \hat{s}_t} + \frac{\|\hat{y}_t\|^2}{\hat{s}_t^\top \hat{y}_t}, \qquad \mathbf{Det}(\hat{B}_{t + 1}) = \mathbf{Det}(\hat{B}_t)\frac{\hat{s}_t^\top \hat{y}_t}{\hat{s}_t^\top \hat{B}_t \hat{s}_t}.
\end{align*}
Taking the logarithm on both sides of the second equation, we obtain that
\begin{equation*}
    \log{\frac{\hat{s}_t^\top \hat{y}_t}{\hat{s}_t^\top \hat{B}_t \hat{s}_t}} = \log{\mathbf{Det}(\hat{B}_{t + 1})} - \log{\mathbf{Det}(\hat{B}_t)}.
\end{equation*}
Thus, we obtain that 
\begin{align*}
    & \Psi(\hat{B}_{t+1}) - \Psi(\hat{B}_t) = \mathbf{Tr}(\hat{B}_{t+ 1}) - \mathbf{Tr}(\hat{B}_{t}) + \log\mathbf{Det}(\hat{B}_t) - \log{\mathbf{Det}(\hat{B}_{t + 1})} \\
    & = \frac{\|\hat{y}_t\|^2}{\hat{s}_t^\top \hat{y}_t} - \frac{\|\hat{B}_t \hat{s}_t\|^2}{\hat{s}_t^\top \hat{B}_t \hat{s}_t} - \log{\frac{\hat{s}_t^\top \hat{y}_t}{\hat{s}_t^\top \hat{B}_t \hat{s}_t}} = \frac{\|\hat{y}_t\|^2}{\hat{s}_t^\top \hat{y}_t} - \frac{\|\hat{B}_t \hat{s}_t\|^2}{\hat{s}_t^\top \hat{B}_t \hat{s}_t} - \log{\frac{\hat{s}_t^\top \hat{y}_t}{\|\hat{s}_t\|^2}} - \log{\frac{\|\hat{s}_t\|^2}{\hat{s}_t^\top \hat{B}_t \hat{s}_t}},
\end{align*}
which leads to
\begin{equation*}
    \frac{\|\hat{B}_t \hat{s}_t\|^2}{\hat{s}_t^\top \hat{B}_t \hat{s}_t} - \log{\frac{\hat{s}_t^\top \hat{B}_t \hat{s}_t}{\|\hat{s}_t\|^2}} - 1 = \Psi(\hat{B}_t) - \Psi(\hat{B}_{t+1}) + \frac{\|\hat{y}_t\|^2}{\hat{s}_t^\top \hat{y}_t} - 1 + \log{\frac{\|\hat{s}_t\|^2}{\hat{s}_t^\top \hat{y}_t}}.
\end{equation*}
Notice that $\hat{B}_t \hat{s}_t = -\eta_t \hat{g}_t$, $\hat{s}_t^\top \hat{B}_t \hat{s}_t = - \eta_t^2 \hat{g}_t^\top \hat{d}_t$ and $\|\hat{s}_t\|^2 = \eta_t^2 \|\hat{d}_t\|^2$, we have that
\begin{equation*}
    \frac{\|\hat{g}_t\|^2}{-\hat{g}_t^\top \hat{d}_t} - \log{\frac{-\hat{g}_t^\top \hat{d}_t}{\|\hat{d}_t\|^2}} - 1 = \Psi(\hat{B}_t) - \Psi(\hat{B}_{t+1}) + \frac{\|\hat{y}_t\|^2}{\hat{s}_t^\top \hat{y}_t} - 1 + \log{\frac{\|\hat{s}_t\|^2}{\hat{s}_t^\top \hat{y}_t}}.
\end{equation*}
Note that given the fact that $-\hat{g}_t^\top \hat{d}_t = \hat{g}_t^\top \hat{B}_t^{-1} \hat{g}_t > 0$, by using the Cauchy–Schwarz inequality we obtain $\frac{\|\hat{g}_t\|^2}{-\hat{g}_t^\top \hat{d}_t} \geq \frac{-\hat{g}_t^\top \hat{d}_t}{\|\hat{d}_t\|^2}$.  Hence, we can write
\begin{equation*}
    \frac{-\hat{g}_t^\top \hat{d}_t}{\|\hat{d}_t\|^2} - \log{\frac{-\hat{g}_t^\top \hat{d}_t}{\|\hat{d}_t\|^2}} - 1 \leq \Psi(\hat{B}_t) - \Psi(\hat{B}_{t+1}) + \frac{\|\hat{y}_t\|^2}{\hat{s}_t^\top \hat{y}_t} - 1 + \log{\frac{\|\hat{s}_t\|^2}{\hat{s}_t^\top \hat{y}_t}}.
\end{equation*}
Now, by selecting the weight matrix as $P = \nabla^2{f(x_*)}$, many expressions get simplified and we have  $\frac{-\hat{g}_t^\top \hat{d}_t}{\|\hat{d}_t\|^2} = \frac{-g_t^\top d_t}{\|\tilde{d}_t\|^2} = \rho_t$, $\rho_t - \log{\rho_t} - 1 = \omega(\rho_t - 1)$, and $\hat{B}_t = \tilde{B}_t = \nabla^2{f(x_*)}^{-\frac{1}{2}}B_t\nabla^2{f(x_*)}^{-\frac{1}{2}}$. Hence, we have
\begin{equation}\label{proof_lemma_BFGS_superlinear}
    \omega(\rho_t - 1) \leq \Psi(\tilde{B}_t) - \Psi(\tilde{B}_{t+1}) + \frac{\|\hat{y}_t\|^2}{\hat{s}_t^\top \hat{y}_t} - 1 + \log{\frac{\|\hat{s}_t\|^2}{\hat{s}_t^\top \hat{y}_t}}.
\end{equation}
Notice that $\frac{\|\hat{y}_t\|^2}{\hat{s}_t^\top \hat{y}_t} \leq 1 + C_t$ for any $t \geq 0$ by using Lemma~\ref{lem:y^2/sy} (b) with $P = \nabla^2{f(x_*)}$ and $\log{\frac{\|\hat{s}_t\|^2}{\hat{s}_t^\top \hat{y}_t}} = \log{\frac{\|\hat{s}_t\|^2}{\hat{s}_t^\top\hat{J}_t \hat{s}_t}} \leq \log(1 + C_t) \leq C_t$ for any $t \geq 0$ by using \eqref{eq:J_t_vs_H*} from Lemma~\ref{lemma_Hessian}. Leveraging these conditions with the inequality \eqref{proof_lemma_BFGS_superlinear}, we obtain that
\begin{equation*}
    \omega(\rho_t - 1) \leq \Psi(\tilde{B}_t) - \Psi(\tilde{B}_{t+1}) + 2C_t.
\end{equation*}
Summing both sides of the above inequality from $i = 0$ to $t - 1$, we prove the conclusion
\begin{equation*}
    \sum_{i = 0}^{t - 1} \omega(\rho_i - 1) \leq  \Psi(\tilde{B}_{0}) - \Psi(\tilde{B}_{t}) + 2\sum_{i = 0}^{t - 1}C_i \leq  \Psi(\tilde{B}_{0}) + 2\sum_{i = 0}^{t - 1}C_i,
\end{equation*}
where the last inequality holds since $\Psi(\tilde{B}_{t}) \geq 0$.
\end{proof}

\begin{lemma}\label{lemma_decrease}
    Suppose Assumptions~\ref{ass_str_cvx}, \ref{ass_smooth}, and \ref{ass_Hess_lip} hold and $C_t \leq \frac{1}{6}$ and $\rho_t \geq \frac{7}{8}$ at iteration $t$, then we have
    \begin{equation}
        f(x_t + d_t) \leq f(x_t).
    \end{equation}
\end{lemma}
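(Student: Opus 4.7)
The plan is to apply the Hessian-Lipschitz cubic upper bound
$$f(x_t + d_t) \leq f(x_t) + g_t^\top d_t + \tfrac{1}{2}d_t^\top \nabla^2 f(x_t) d_t + \tfrac{M}{6}\|d_t\|^3,$$
substitute $g_t^\top d_t = -\rho_t\|\tilde{d}_t\|^2$ from the definition in \eqref{definition_rho}, and use Lemma~\ref{lemma_Hessian}(c) to bound $d_t^\top \nabla^2 f(x_t) d_t \leq (1+C_t)\|\tilde{d}_t\|^2$. This reduces the descent claim to showing
$$-\rho_t + \tfrac{1+C_t}{2} + \tfrac{M\|d_t\|^3}{6\|\tilde{d}_t\|^2} \leq 0$$
under the hypotheses $C_t\leq 1/6$ and $\rho_t \geq 7/8$.

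The main obstacle is the cubic remainder: since the BFGS direction $d_t = -B_t^{-1} g_t$ has no a priori size bound, the term $M\|d_t\|^3/\|\tilde{d}_t\|^2$ must be controlled purely through the two hypotheses. My plan is to pass to the $\nabla^2 f(x_*)$-weighted norm in two stages. First, strong convexity gives $\|d_t\|^2 \leq \|\tilde{d}_t\|^2/\mu$, hence $\|d_t\|^3/\|\tilde{d}_t\|^2 \leq \|\tilde{d}_t\|/\mu^{3/2}$. Second, Cauchy-Schwarz in the $\nabla^2 f(x_*)^{-1}$-weighted inner product applied to $\rho_t\|\tilde{d}_t\|^2 = -g_t^\top d_t$ yields $\|\tilde{d}_t\| \leq \|\nabla^2 f(x_*)^{-1/2} g_t\|/\rho_t$. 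To control the last norm, I would use the identity $g_t = G_t(x_t - x_*)$; then Lemma~\ref{lemma_Hessian}(d) gives $G_t \preceq (1+C_t)\nabla^2 f(x_*)$, and by conjugating with $\nabla^2 f(x_*)^{-1/2}$ this upgrades to the matrix inequality $G_t \nabla^2 f(x_*)^{-1} G_t \preceq (1+C_t)^2 \nabla^2 f(x_*)$, hence $\|\nabla^2 f(x_*)^{-1/2} g_t\|^2 \leq (1+C_t)^2\|x_t - x_*\|_{\nabla^2 f(x_*)}^2$. A Hessian-Lipschitz averaging argument parallel to Lemma~\ref{lemma_Hessian}(d), applied to the integral identity $f(x_t) - f(x_*) = \int_0^1 (1-\tau)(x_t-x_*)^\top \nabla^2 f(x_*+\tau(x_t-x_*))(x_t-x_*)\,d\tau$, yields $\|x_t - x_*\|_{\nabla^2 f(x_*)}^2 \leq 2(1+C_t)(f(x_t)-f(x_*))$. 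Invoking the definition $C_t = (M/\mu^{3/2})\sqrt{2(f(x_t)-f(x_*))}$ from \eqref{distance}, these combine to give $M\|\tilde{d}_t\|/\mu^{3/2} \leq (1+C_t)^{3/2} C_t/\rho_t$.

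Substituting this bound back, the claim reduces to verifying the purely numerical inequality
$$\tfrac{1+C_t}{2} + \tfrac{(1+C_t)^{3/2} C_t}{6\rho_t} \leq \rho_t$$
whenever $C_t \leq 1/6$ and $\rho_t \geq 7/8$. Since the left-hand side is nondecreasing in $C_t$ and nonincreasing in $\rho_t$, it suffices to check the worst corner $C_t = 1/6$, $\rho_t = 7/8$, where the left-hand side equals $\tfrac{7}{12} + \tfrac{(7/6)^{3/2}}{6 \cdot 6 \cdot (7/8)}$. Because $(7/6)^{3/2} < 1.27$, the cubic correction is smaller than $0.05$, while the linear margin $\tfrac{7}{8}-\tfrac{7}{12}=\tfrac{7}{24}$ exceeds $0.29$, so the inequality holds with comfortable slack. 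This margin is precisely what motivates the particular constants $1/6$ and $7/8$ in the statement.
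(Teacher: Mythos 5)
Your proposal is correct and follows essentially the same route as the paper's proof: cubic Taylor upper bound from the Hessian-Lipschitz assumption, the quadratic term controlled via $\nabla^2 f(x_t) \preceq (1+C_t)\nabla^2 f(x_*)$ and the definition of $\rho_t$, and the cubic remainder controlled via strong convexity, Cauchy--Schwarz in the $\nabla^2 f(x_*)$-weighted inner product, and a bound on $\|\nabla^2 f(x_*)^{-1/2}g_t\|$ in terms of $\sqrt{f(x_t)-f(x_*)}$, ending with the same numerical check at the corner $C_t=\tfrac{1}{6}$, $\rho_t=\tfrac{7}{8}$. The only (immaterial) difference is your bound $\|\nabla^2 f(x_*)^{-1/2}g_t\| \leq (1+C_t)^{3/2}\sqrt{2(f(x_t)-f(x_*))}$, slightly looser than the paper's $(1+C_t)$ factor obtained from the Lagrange-remainder identity $f(x_t)-f(x_*)=\tfrac12 g_t^\top G_t^{-1}\nabla^2 f(\cdot)G_t^{-1}g_t$, which still passes the final verification with room to spare.
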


\begin{proof}
    Since assumption~\ref{ass_Hess_lip} hold, using Lemma 1.2.4 in \cite{convexlecture}, we have that
    \begin{align*}
        |f(y) - f(x) - \nabla{f}(x)^\top(y - x) - \frac{1}{2}(y - x)^\top\nabla^2{f}(x)(y - x)| \leq \frac{M}{6}\|y - x\|^3, \qquad \forall x, y \in \mathbb{R}^d.
    \end{align*}
    Setting $x = x_t$ and $y = x_t + d_t$, we have that
    \begin{equation}\label{lemma_decrease_1}
        f(x_t + d_t) - f(x_t) \leq g_t^\top d_t + \frac{1}{2}d_t^\top\nabla^2{f}(x_t)d_t + \frac{M}{6}\|d_t\|^3.
    \end{equation}
    Notice that using \eqref{eq:H_t_vs_H*} from Lemma~\ref{lemma_Hessian} and the definition of $\rho_t$ in \eqref{definition_rho}, we have that
    \begin{equation}\label{lemma_decrease_2}
        d_t^\top\nabla^2{f}(x_t)d_t \leq (1 + C_t)d_t^\top\nabla^2{f}(x_*)d_t = -g_t^\top d_t(1 + C_t)\frac{\|\tilde{d}_t\|^2}{-g_t^\top d_t} = -g_t^\top d_t\frac{1 + C_t}{\rho_t}.
    \end{equation}
    Applying Assumption~\ref{ass_str_cvx} with the definition $\tilde{d}_t = \nabla^2{f(x_*)}^{\frac{1}{2}}d_t$, we obtain that
    \begin{equation*}
        \|d_t\|^3 \leq \frac{1}{\mu^{\frac{3}{2}}}\|\tilde{d}_t\|^3 = \frac{-g_t^\top d_t}{\mu^{\frac{3}{2}}}\frac{\|\tilde{d}_t\|^2}{-g_t^\top d_t}\|\tilde{d}_t\| = \frac{-g_t^\top d_t}{\mu^{\frac{3}{2}}}\frac{1}{\rho_t}\|\tilde{d}_t\|.
    \end{equation*}
    Since $-\tilde{g}_t^\top \tilde{d}_t \leq \|\tilde{g}_t\|\|\tilde{d}_t\|$ by Cauchy–Schwarz inequality where $\tilde{g}_t = \nabla^2{f(x_*)}^{-\frac{1}{2}}g_t$, we obtain
    \begin{equation*}
        \|\tilde{d}_t\| = \|\tilde{g}_t\|\frac{\|\tilde{d}_t\|}{\|\tilde{g}_t\|} \leq \|\tilde{g}_t\|\frac{\|\tilde{d}_t\|^2}{-\tilde{g}_t^\top \tilde{d}_t} = \frac{1}{\rho_t}\|\tilde{g}_t\|,
    \end{equation*}
    which leads to
    \begin{equation}\label{lemma_decrease_3}
        \|d_t\|^3 \leq \frac{-g_t^\top d_t}{\mu^{\frac{3}{2}}}\frac{1}{\rho_t}\|\tilde{d}_t\| \leq \frac{-g_t^\top d_t}{\mu^{\frac{3}{2}}}\frac{1}{\rho_t^2}\|\tilde{g}_k\|.
    \end{equation}
    By applying Taylor's theorem with Lagrange remainder, there exists $\tilde{\tau}_t \in [0,1]$ such that 
    \begin{equation}\label{lemma_decrease_4}
    \begin{split}
        f(x_t)  & = f(x_*) + \nabla{f(x_*)}^\top (x_t - x_*) + \frac{1}{2}(x_t - x_*)^\top \nabla^2{f(x_t + \tilde{\tau}_t(x_* - x_t))}(x_t - x_*) \\
        & = f(x_*) + \frac{1}{2}(x_t - x_*)^\top \nabla^2{f(x_t + \tilde{\tau}_t(x_* - x_t))}(x_t - x_*),
    \end{split}
    \end{equation}
    where we used the fact that $\nabla{f(x_*)} = 0$ in the last equality. Moreover, by the fundamental theorem of calculus, we have
    \begin{equation*}
        \nabla{f(x_t)} - \nabla{f(x_*)} = \int_{0}^{1}\nabla^2{f(x_t + \tau (x_{*} - x_t))} (x_t - x_*) \; d\tau = G_t(x_t - x^*),
    \end{equation*}
    where we use the definition of $G_t$ in \eqref{def_J_G}. Since $\nabla f(x_*) = 0$ and we denote $g_t = \nabla{f(x_t)}$, this further implies that
    \begin{equation}\label{lemma_decrease_5}
        x_t - x_* = G_t^{-1}(\nabla{f(x_t)} - \nabla{f(x_*)}) = G_t^{-1} g_t.
    \end{equation}
    Combining \eqref{lemma_decrease_4} and \eqref{lemma_decrease_5} leads to 
    \begin{equation}\label{lemma_decrease_6}
        f(x_t) - f(x_*) = \frac{1}{2}g_t^\top G_t^{-1}\nabla^2{f(x_t + \tilde{\tau}_t(x_* - x_t))}G_t^{-1}g_t. 
    \end{equation}
    Based on \eqref{eq:middle_hessian_vs_g_t} in Lemma~\ref{lemma_Hessian}, we have $\nabla^2{f(x_t + \tilde{\tau}_t (x_{*} - x_{t}))} \succeq \frac{1}{1 + C_t}G_t$, which implies that
    \begin{equation*}
        G_t^{-1}\nabla^2{f(x_t + \tilde{\tau}_t (x_{*} - x_{t}))}G_t^{-1} \succeq \frac{1}{1 + C_t}G_t^{-1}.
    \end{equation*}
    Moreover, it follows from \eqref{eq:g_t_vs_H*} in Lemma~\ref{lemma_Hessian} that $G_t \preceq (1 + C_t)\nabla^2{f(x_*)}$, which implies that
    \begin{equation*}
        G_t^{-1} \succeq \frac{1}{1 + C_t}(\nabla^2{f(x_*)})^{-1}.
    \end{equation*}
    Combining the above two conditions, we obtain that
    \begin{equation*}
        G_t^{-1}\nabla^2{f(x_t + \tilde{\tau}_t (x_{*} - x_{t}))}G_t^{-1} \succeq \frac{1}{(1 + C_t)^2}(\nabla^2{f(x_*)})^{-1},
    \end{equation*}
    and hence 
    \begin{equation}\label{lemma_decrease_7}
        g_t^\top G_t^{-1}\nabla^2{f(x_t + \tilde{\tau}_t (x_{*} - x_{t}))}G_t^{-1}g_t \geq \frac{1}{(1 + C_t)^2} g_t^\top(\nabla^2{f(x_*)})^{-1} g_t = \frac{1}{(1 + C_t)^2}\|\tilde{g}_t\|^2.
    \end{equation}
    Combining \eqref{lemma_decrease_6} and \eqref{lemma_decrease_7} leads to
    \begin{equation}\label{lemma_decrease_8}
        \|\tilde{g}_k\| \leq (1 + C_t)\sqrt{2(f(x_t) - f(x_*))}.
    \end{equation}
    Combining \eqref{lemma_decrease_3} and \eqref{lemma_decrease_8} leads to
    \begin{equation}\label{lemma_decrease_9}
        \|d_t\|^3 \leq \frac{-g_t^\top d_t}{\mu^{\frac{3}{2}}}\frac{1}{\rho_t^2}\|\tilde{g}_k\| \leq \frac{-g_t^\top d_t}{\mu^{\frac{3}{2}}}\frac{1}{\rho_t^2}(1 + C_t)\sqrt{2(f(x_t) - f(x_*))}.
    \end{equation}
    Leveraging \eqref{lemma_decrease_1}, \eqref{lemma_decrease_2} and \eqref{lemma_decrease_9} with the definition of $C_t$ in \eqref{distance}, we have that
    \begin{equation}\label{lemma_decrease_10}
    \begin{split}
        & f(x_t + d_t) - f(x_t) \leq g_t^\top d_t + \frac{1}{2}d_t^\top\nabla^2{f}(x_t)d_t + \frac{M}{6}\|d_t\|^3 \\
        & = -g_t^\top d_t(-1 + \frac{1 + C_t}{2\rho_t} + \frac{M}{6}\frac{1}{\mu^{\frac{3}{2}}}\frac{1}{\rho_t^2}(1 + C_t)\sqrt{2(f(x_t) - f(x_*))}) \\
        & = -g_t^\top d_t(-1 + \frac{1 + C_t}{2\rho_t} + \frac{C_t(1 + C_t)}{6\rho_t^2}).
    \end{split}
    \end{equation}
    Notice that $-g_t^\top d_t = -g_t^\top B_t^{-1} g_t > 0$ and when $C_t \leq \frac{1}{6}$ and $\rho_t \geq \frac{7}{8}$, we can verify that
    \begin{equation*}
        \frac{1 + C_t}{2\rho_t} + \frac{C_t(1 + C_t)}{6\rho_t^2} < 1.
    \end{equation*}
    Therefore, \eqref{lemma_decrease_10} implies the conclusion that
    \begin{equation*}
        f(x_t + d_t) - f(x_t) \leq 0.
    \end{equation*}
\end{proof}

\subsection{Proof of Lemma~\ref{lem:lower_bound}}

    Since $\eta_t = 1$ satisfies Armijo-Wolfe conditions, we know that $\eta_t$ is chosen to be one at iteration $t$ and $x_{t + 1} = x_t + d_t$. We have $f(x_{t + 1}) \leq f(x_t)$ from Lemma~\ref{lemma_line_search}. Using Taylor's expansion, we have that $f(x_{t + 1}) = f(x_t) + g_t^\top d_t + \frac{1}{2}d_t^\top \nabla^2{f(x_t + \hat{\tau} (x_{t + 1} - x_{t}))}d_t$, where $\hat{\tau} \in [0, 1]$. Hence, we have that
    \begin{equation*}
    \begin{split}
        \hat{p}_t & = \frac{f(x_t) - f(x_{t+1})}{-g_t^\top d_t} = \frac{-g_t^\top d_t - \frac{1}{2}d_t^\top \nabla^2{f(x_t + \hat{\tau} (x_{t + 1} - x_{t}))}d_t}{-g_t^\top d_t} \\
        & = 1 - \frac{1}{2}\frac{d_t^\top \nabla^2{f(x_t + \hat{\tau} (x_{t + 1} - x_{t}))}d_t}{-g_t^\top d_t} \geq 1 - \frac{1 + C_t}{2}\frac{d_t^\top \nabla^2{f(x_*)}d_t}{-g_t^\top d_t} = 1 - \frac{1 + C_t}{2\rho_t},
    \end{split}
    \end{equation*}
    where we apply the \eqref{eq:middle_hessian_vs_H*} from Lemma~\ref{lemma_Hessian} since $f(x_{t + 1}) \leq f(x_t)$ and recall the definition of $\rho_t$ in \eqref{definition_rho}. Similarly, using \eqref{eq:J_t_vs_H*} from Lemma~\ref{lemma_Hessian} since $f(x_{t + 1}) \leq f(x_t)$, we have that
    \begin{equation*}
        \hat{n}_t= \frac{y_t^\top s_t}{-g_t^\top s_t} = \frac{s_t^\top J_t s_t}{-g_t^\top s_t} = \frac{d_t^\top J_t d_t}{-g_t^\top d_t} \geq \frac{1}{1 + C_t}\frac{d_t^\top \nabla^2{f(x_*)}d_t}{-g_t^\top d_t} = \frac{1}{(1 + C_t)\rho_t},
    \end{equation*}
    where we use the fact that $y_t= J_t s_t$ with $J_t$ defined in \eqref{def_J_G} and $s_t = x_{t + 1} - x_t = d_t$. Therefore, we prove the conclusions.

\subsection{Proof of Lemma~\ref{lem:unit_step_size}}

  Denote $\bar{x}_{t + 1} = x_t + d_t$ and $\bar{s}_t = \bar{x}_{t + 1} - x_t = d_t$. Since $\delta_1 \leq \frac{1}{6}$ and $\delta_2 \geq \frac{7}{8}$, we have $f(\bar{x}_{t + 1}) \leq f(x_t)$ from Lemma~\ref{lemma_decrease}. Using Taylor's expansion, we have that $f(\bar{x}_{t + 1}) = f(x_t) + g_t^\top d_t + \frac{1}{2}d_t^\top \nabla^2{f(x_t + \hat{\tau} (\bar{x}_{t + 1} - x_{t}))}d_t$, where $\hat{\tau} \in [0, 1]$. Hence, we have
    \begin{equation*}
    \begin{split}
        & \frac{f(x_t) - f(\bar{x}_{k+1})}{-g_t^\top d_t} = \frac{-g_t^\top d_t - \frac{1}{2}d_t^\top \nabla^2{f(x_t + \hat{\tau} (\bar{x}_{t + 1} - x_{t}))}d_t}{-g_t^\top d_t} \\
        & = 1 - \frac{1}{2}\frac{d_t^\top \nabla^2{f(x_t + \hat{\tau} (\bar{x}_{t + 1} - x_{t}))}d_t}{-g_t^\top d_t} \geq 1 - \frac{1 + C_t}{2}\frac{d_t^\top \nabla^2{f(x_*)}d_t}{-g_t^\top d_t} = 1 - \frac{1 + C_t}{2\rho_t},
    \end{split}
    \end{equation*}
    where we apply the \eqref{eq:middle_hessian_vs_H*} from Lemma~\ref{lemma_Hessian} since $f(\bar{x}_{t + 1}) \leq f(x_t)$. Therefore, when $C_t \leq \delta_1 \leq \sqrt{2(1 - \alpha)}- 1$ and $\rho_t \geq \delta_2 \geq \frac{1}{\sqrt{2(1 - \alpha)}}$, we obtain that $\frac{f(x_t) - f(\bar{x}_{k+1})}{-g_t^\top d_t} \geq 1 - \frac{1 + C_t}{2\rho_t} \geq \alpha$ and unit step size $\eta_t = 1$ satisfies the sufficient condition \eqref{sufficient_decrease}. 
    
    Similarly, using \eqref{eq:J_t_vs_H*} from Lemma~\ref{lemma_Hessian} since $f(\bar{x}_{t + 1}) \leq f(x_t)$ and denote $\bar{g}_{k + 1} = \nabla{f}(\bar{x}_{t + 1})$, $\bar{y}_t = \bar{g}_{k + 1} - g_t$, we have that
    \begin{equation*}
    \frac{\bar{y}_t^\top \bar{s}_t}{-g_t^\top \bar{s}_t} = \frac{\bar{s}_t^\top J_t \bar{s}_t}{-g_t^\top \bar{s}_t} = \frac{d_t^\top J_t d_t}{-g_t^\top d_t} \geq \frac{1}{1 + C_t}\frac{d_t^\top \nabla^2{f(x_*)}d_t}{-g_t^\top d_t} = \frac{1}{(1 + C_t)\rho_t}.
    \end{equation*}
    Therefore, when $C_t \leq \delta_1 \leq \frac{1}{\sqrt{1 - \beta}} - 1$ and $\rho_t \leq \delta_3 = \frac{1}{\sqrt{1 - \beta}}$, we obtain that $\frac{\bar{y}_t^\top \bar{s}_t}{-g_t^\top \bar{s}_t} \geq \frac{1}{(1 + C_t)\rho_t} \geq 1 - \beta$, which indicates that $\bar{g}_{t + 1}^\top d_t = \bar{g}_{t + 1}^\top \bar{s}_t = \bar{y}_t^\top \bar{s}_t + g_{t}^\top \bar{s}_t \geq -g_{t}^\top \bar{s}_t(1 -\beta) + g_{t}^\top \bar{s}_t = \beta g_{t}^\top \bar{s}_t = \beta g_{t}^\top d_t$. Hence, unit step size $\eta_t = 1$ satisfies the curvature condition \eqref{curvature_condition}. Therefore, we prove that when $C_t \leq \delta_1$ and $\delta_2 \leq \rho_t \leq \delta_3$, step size $\eta_t = 1$ satisfies the Armijo-Wolfe conditions \eqref{sufficient_decrease} and \eqref{curvature_condition}.

\subsection{Proof of Lemma~\ref{lem:bad_set}}\label{lem:bad_set_proof}

  Since in Theorem~\ref{thm:first_linear_rate}, we already prove that
    \begin{equation*}
        \frac{f(x_{t}) - f(x_*)}{f(x_0) - f(x_*)} \leq \left(1 - e^{-\frac{\Psi(\bar{B}_0)}{t}}\frac{2\alpha(1 - \beta)}{\kappa}\right)^{t}. 
    \end{equation*}
    This implies that
    \begin{equation*}
        C_t \leq \left(1 - e^{-\frac{\Psi(\bar{B}_0)}{t}}\frac{2\alpha(1 - \beta)}{\kappa}\right)^{\frac{t}{2}}C_0.
    \end{equation*}
    When $t \geq \Psi(\bar{B}_0)$, we obtain that
    \begin{equation*}
        C_t \leq \left(1 - \frac{2\alpha(1 - \beta)}{3\kappa}\right)^{\frac{t}{2}}C_0.
    \end{equation*}
    When $t \geq \frac{3\kappa}{\alpha(1 - \beta)}\log{\frac{C_0}{\delta_1}}$, we obtain that
    \begin{equation*}
        C_t \leq \left(1 - \frac{2\alpha(1 - \beta)}{3\kappa}\right)^{\frac{t}{2}}C_0 \leq \delta_1.
    \end{equation*}
    Therefore, the first claim in \eqref{definition_t_0} follows.

    Now define $I_1  = \{t:\; \rho_t < \delta_2\}$ and $I_2  = \{t:\; \rho_t > \delta_3\}$, we know that $|I| = |I_1| + |I_2|$. Notice that for $t \in I_1$, we have that $\rho_t - 1 < \delta_2 - 1 < 0$ since $\delta_2 < 1$ and the function $\omega(x)$ defined in \eqref{definition_omega} is decreasing for $-1 < x < 0$ from (a) in Lemma~\ref{lemma_omega}. Hence, we have that $ \sum_{i \in I_1}\omega(\rho_i - 1) \geq  \sum_{i \in I_1}\omega(\delta_2 - 1) = \omega(\delta_2 - 1)|I_1|$. Similarly, we have that for $t \in I_2$, we have that $\rho_i - 1 > \delta_3 - 1 > 0$ since $\delta_3 > 1$ and the function $\omega(x)$ is increasing for $x > 0$ from (a) in Lemma~\ref{lemma_omega}. Hence, we have that $ \sum_{i \in I_2}\omega(\rho_i - 1) \geq \sum_{i \in I_2}\omega(\delta_3 - 1) = \omega(\delta_3 - 1)|I_2|$. Using \eqref{eq:omega} from Proposition~\ref{lemma_BFGS_superlinear}, we have that $\sum_{i = 0}^{t - 1} \omega(\rho_i - 1) \leq \Psi(\tilde{B}_{0}) + 2\sum_{i = 0}^{t  - 1}C_i \leq \Psi(\tilde{B}_{0}) + 2\sum_{i = 0}^{+\infty}C_i$ for any $t \geq 1$. Therefore, we obtain that
    \begin{align*}
        & \Psi(\tilde{B}_{0}) + 2\sum_{i = 0}^{+\infty}C_i \geq \sum_{i = 0}^{+\infty} \omega(\rho_i - 1) \geq \sum_{i \in I_1}\omega(\beta_i - 1) + \sum_{i \in I_2}\omega(\beta_i - 1) \\
        & \geq \omega(\delta_2 - 1)|I_1| + \omega(\delta_3 - 1)|I_2|  \geq \min\{\omega(\delta_2 - 1), \omega(\delta_3 - 1)\}(|I_1| + |I_2|),
    \end{align*}
    which leads to the result
    \begin{equation}\label{lem:bad_set_proof_1}
        |I| = |I_1| + |I_2| \leq \frac{\Psi(\tilde{B}_{0}) + 2\sum_{i = 0}^{+\infty}C_i}{\min\{\omega(\delta_2 - 1), \omega(\delta_3 - 1)\}} = \delta_4 \left(\Psi(\tilde{B}_{0}) + 2\sum_{i = 0}^{+\infty}C_i\right),
    \end{equation}
    where $\delta_4 := \frac{1}{\min\{\omega(\delta_2 - 1), \omega(\delta_3 - 1)\}}$. Using the upper bound of $\sum_{i = 0}^{+\infty}C_i \leq C_0 \Psi(\bar{B}_{0})+ \frac{3C_0\kappa}{\alpha(1 - \beta)}$ in \eqref{sum_of_C_t}, we prove the second claim in \eqref{definition_delta_4}.

\subsection{Proof of Theorem~\ref{thm:superlinear_rate}}

    First, we prove that for any initial point $x_0 \in \mathbb{R}^{d}$ and any initial Hessian approximation matrix $B_0 \in \mathbb{S}^d_{++}$, the following result holds:
    \begin{equation*}
        \frac{f(x_t) - f(x_*)}{f(x_0) - f(x_*)} \leq \left(\frac{\delta_6 t_0 + \delta_7 \Psi(\tilde{B}_0) + \delta_8 \sum_{i = 0}^{+\infty}C_i}{t}\right)^{t}, \qquad \forall t > t_0,
    \end{equation*}
    where $t_0$ is defined in \eqref{definition_t_0}. We choose the weight matrix as $P = \nabla^2 f(x_*)$ throughout the proof. Using results \eqref{eq:prodcut_cos} and \eqref{eq:product_alpha_q} from the proof of Proposition~\ref{prop:second_linear_rate}, we obtain that
    \begin{equation}\label{proposition_4_1}
        \prod_{i = 0}^{t - 1} \frac{\cos^2(\hat{\theta}_i)}{\hat{m}_i} \geq e^{- \Psi(\Tilde{B}_{0}) - \sum_{i = 0}^{t - 1}C_i} \geq e^{- \Psi(\Tilde{B}_{0}) - \sum_{i = 0}^{+\infty}C_i}.
    \end{equation}
    \begin{equation}\label{proposition_4_2}
        \prod_{i = 0}^{t - 1} \hat{q}_i \geq 2^te^{-2\sum_{i = 0}^{t - 1}C_i} \geq 2^te^{-2\sum_{i = 0}^{+\infty}C_i}.
    \end{equation}
    Recall the definition of the set $I = \{t:\; \rho_t \notin [\delta_2, \delta_3]\}$. Notice that for $t \geq t_0$, define $I_3  = \{t:\; t \geq t_0, \rho_t \notin [\delta_2, \delta_3]\}$ and $I_4 = \{t:\; t \geq t_0, \rho_t \in [\delta_2, \delta_3]\}$. Then, we have that
    \begin{equation}\label{proposition_4_3}
        \prod_{i = 0}^{t - 1} \hat{p}_i\hat{n}_i = \prod_{i = 0}^{t_0 - 1} \hat{p}_i\hat{n}_i \prod_{i = t_0}^{t - 1} \hat{p}_i\hat{n}_i = \prod_{i = 0}^{t_0 - 1} \hat{p}_i\hat{n}_i \prod_{i \in I_3} \hat{p}_i\hat{n}_i \prod_{i \in I_4} \hat{p}_i\hat{n}_i.
    \end{equation}
    From Lemma~\ref{lemma_line_search}, we know $\hat{p}_t \geq \alpha$ and $\hat{n}_t\geq 1 - \beta$ for any $t \geq 0$, which lead to
    \begin{equation}\label{proposition_4_4}
        \prod_{i = 0}^{t_0 - 1} \hat{p}_i\hat{n}_i \geq \alpha^{t_0} (1 - \beta)^{t_0} = \frac{1}{2^{t_0}}e^{-t_0\log{\frac{1}{2\alpha(1 - \beta)}}}.
    \end{equation}
    \begin{equation}\label{proposition_4_5}
    \begin{split}
        \prod_{i \in I_3} \hat{p}_i\hat{n}_i & \geq \prod_{i \in I_3} \alpha(1 - \beta) = \frac{1}{2^{|I_3|}}e^{-|I_3|\log{\frac{1}{2\alpha(1 - \beta)}}} \geq \frac{1}{2^{|I_3|}}e^{-|I|\log{\frac{1}{2\alpha(1 - \beta)}}}\\
        & \geq \frac{1}{2^{|I_3|}}e^{-\delta_4\Big(\Psi(\tilde{B}_{0}) + 2\sum_{i = 0}^{+\infty}C_i\Big)\log{\frac{1}{2\alpha(1 - \beta)}}},
    \end{split}
    \end{equation}
    where the second inequality holds since $|I_3| \leq |I|$, $\log{\frac{1}{2\alpha(1 - \beta)}} > 0$ and the last inequality holds since \eqref{lem:bad_set_proof_1} from the proof of Lemma~\ref{lem:bad_set} in Appendix~\ref{lem:bad_set_proof}. Notice that when index $i \in I_4$, we have $C_i \leq \delta_1$ from Lemma~\ref{lem:bad_set} and $\rho_i \in [\delta_2, \delta_3]$. Applying Lemma~\ref{lem:lower_bound} and Lemma~\ref{lem:unit_step_size}, we know that for $i \in I_4$, $\eta_i = 1$ satisfies the Armijo-Wolfe conditions \eqref{sufficient_decrease}, \eqref{curvature_condition} and we have $\hat{p}_i \geq 1 - \frac{1 + C_i}{2\rho_i} > 0$ (since $C_i \leq \delta_1 \leq \frac{1}{6}$, $\rho_i \geq \delta_2 \geq \frac{7}{8}$) and $\hat{n}_i \geq \frac{1}{(1 + C_i)\rho_i}$ from \eqref{eq:lower_bound}. Hence, we obtain that
    \begin{equation}\label{proposition_4_6}
    \begin{split}
        & \prod_{i \in I_4} \hat{p}_i\hat{n}_i \geq \frac{1}{2^{|I_4|}}\prod_{i \in I_4}(2 - \frac{1 + C_i}{\rho_i})\frac{1}{(1 + C_i)\rho_i} \geq \frac{1}{2^{|I_4|}}e^{-\sum_{i \in I_4}C_i}\prod_{i \in I_4}(2 - \frac{1 + C_i}{\rho_i})\frac{1}{\rho_i},
    \end{split}
    \end{equation}
    where the last inequality holds since $\frac{1}{1 + C_i} \geq e^{-C_i}$. Using the fact that $\log{x} \geq 1 - \frac{1}{x}$, we obtain
    \begin{equation}\label{proposition_4_7}
    \begin{split}
        \prod_{i \in I_4}(2 - \frac{1 + C_i}{\rho_i})\frac{1}{\rho_i} & = \prod_{i \in I_4}e^{\log{(2 - \frac{1 + C_i}{\rho_i})} - \log{\rho_i}} \geq \prod_{i \in I_4}e^{1 - \frac{1}{2 - \frac{1 + C_i}{\rho_i}} - \log{\rho_i}} \\
        & = \prod_{i \in I_4}e^{\frac{\rho_i - 1 - C_i}{2\rho_i - 1 - C_i} - \log{\rho_i}} = \prod_{i \in I_4}e^{\frac{\rho_i - 1 - \log{\rho_i} + 2(1 - \rho_i)\log{\rho_i} - (1 - \log{\rho_i})C_i}{2\rho_i - 1 - C_i}} \\
        & = \prod_{i \in I_4}e^{\frac{\omega(\rho_i - 1) + 2(1 - \rho_i)\log{\rho_i} - (1 - \log{\rho_i})C_i }{2\rho_i - 1 - C_i}} \geq \prod_{i \in I_4}e^{\frac{- 2(\rho_i - 1)\log{\rho_i} - (1 - \log{\rho_i})C_i }{2\rho_i - 1 - C_i}} \\
        & = \prod_{i \in I_4}e^{-\frac{2(\rho_i - 1)\log{\rho_i} + (1 - \log{\rho_i})C_i}{2\rho_i - 1 - C_i}} \geq \prod_{i \in I_4}e^{-\frac{2(\rho_i - 1)\log{\rho_i} + (1 - \log{\delta_2})C_i}{2\delta_2 - 1 - \delta_1}},
    \end{split}
    \end{equation}
    where the second inequality holds since $\omega(\rho_i - 1) \geq 0$ and the third inequality holds since $\rho_i \geq \delta_2$ due to $i \in I_4$ and $C_i \leq \delta_1$ due to $i \geq t_0$ and Lemma~\ref{lem:bad_set}. Notice that $2\rho_i - 1 - C_i \geq 2\delta_2 - 1 - \delta_1 > 0$ for all $i \in I_4$ since $C_i \leq \delta_1 \leq \frac{1}{6}$ and $\rho_i \geq \delta_2 \geq \frac{7}{8}$.

    When $\rho_i \geq 1$, using $\log{\rho_i} \leq \rho_i - 1$, (b) in Lemma~\ref{lemma_omega} and $\rho_i \leq \delta_3$ due to $i \in I_4$, we have that
    \begin{equation}\label{proposition_4_8}
        (\rho_i - 1)\log{\rho_i} \leq (\rho_i - 1)^2 \leq 2\rho_i\omega(\rho_i - 1) \leq 2\delta_3\omega(\rho_i - 1).
    \end{equation}
    Similarly, when $\rho_i < 1$, using $\log{\rho_i} \geq 1 - \frac{1}{\rho_i}$, (c) in Lemma~\ref{lemma_omega} and $\rho_i \geq \delta_2$ due to $i \in I_4$, we have
    \begin{equation}\label{proposition_4_9}
        (\rho_i - 1)\log{\rho_i} \leq \frac{(\rho_i - 1)^2}{\rho_i} \leq \frac{\rho_i + 1}{\rho_i}\omega(\rho_i - 1) \leq (1 + \frac{1}{\delta_2})\omega(\rho_i - 1).
    \end{equation}
    Combining \eqref{proposition_4_7}, \eqref{proposition_4_8} and \eqref{proposition_4_9}, we obtain that
    \begin{equation}\label{proposition_4_10}
    \begin{split}
        & \prod_{i \in I_4}(2 - \frac{1 + C_i}{\rho_i})\frac{1}{\rho_i} \\
        & \geq \prod_{i \in I_4}e^{-\frac{2(\rho_i - 1)\log{\rho_i} + (1 - \log{\delta_2})C_i}{2\delta_2 - 1 - \delta_1}} = \prod_{i \in I_4}e^{-\frac{2(\rho_i - 1)\log{\rho_i}}{2\delta_2 - 1 - \delta_1}} \prod_{i \in I_4}e^{-\frac{(1 - \log{\delta_2})C_i}{2\delta_2 - 1 - \delta_1}} \\
        & = \prod_{i \in I_4, \rho_i < 1}e^{-\frac{2(\rho_i - 1)\log{\rho_i}}{2\delta_2 - 1 - \delta_1}} \prod_{i \in I_4, \rho_i \geq 1}e^{-\frac{2(\rho_i - 1)\log{\rho_i}}{2\delta_2 - 1 - \delta_1}} \prod_{i \in I_4}e^{-\frac{(1 - \log{\delta_2})C_i}{2\delta_2 - 1 - \delta_1}} \\
        & \geq \prod_{i \in I_4, \rho_i < 1}e^{-\frac{2(1 + \frac{1}{\delta_2})\omega(\rho_i - 1)}{2\delta_2 - 1 - \delta_1}} \prod_{i \in I_4, \rho_i \geq 1}e^{-\frac{4\delta_3\omega(\rho_i - 1)}{2\delta_2 - 1 - \delta_1}} \prod_{i \in I_4}e^{-\frac{(1 - \log{\delta_2})C_i}{2\delta_2 - 1 - \delta_1}} \\
        & = e^{- \frac{2 + \frac{2}{\delta_4}}{2\delta_2 - 1 - \delta_1}\sum_{i \in I_2, \rho_i < 1}\omega(\rho_i - 1) - \frac{4\delta_3}{2\delta_2 - 1 - \delta_1}\sum_{i \in I_4, \rho_i \geq 1}\omega(\rho_i - 1) - \frac{1 - \log{\delta_2}}{2\delta_2 - 1 - \delta_1}\sum_{i \in I_4}C_i} \\
        & \geq e^{- \delta_5\left(\sum_{i \in I_4, \rho_i < 1}\omega(\rho_i - 1) + \sum_{i \in I_4, \rho_i \geq 1}\omega(\rho_i - 1)\right) - \frac{1 - \log{\delta_2}}{2\delta_2 - 1 - \delta_1}\sum_{i \in I_4}C_i} \\
        & = e^{- \delta_5\sum_{i \in I_4}\omega(\rho_i - 1) - \frac{1 - \log{\delta_2}}{2\delta_2 - 1 - \delta_1}\sum_{i \in I_4}C_i}
    \end{split}
    \end{equation}
    where $\delta_5 = \max\{\frac{2 + \frac{2}{\delta_2}}{2\delta_2 - 1 - \delta_1}, \frac{4\delta_3}{2\delta_2 - 1 - \delta_1}\}$. Combining \eqref{proposition_4_6} and \eqref{proposition_4_10}, we obtain that
    \begin{equation}\label{proposition_4_11}
    \begin{split}
        \prod_{i \in I_4} \hat{p}_i\hat{n}_i & \geq \frac{1}{2^{|I_4|}}e^{-\sum_{i \in I_4}C_i}\prod_{i \in I_4}(2 - \frac{1 + C_i}{\rho_i})\frac{1}{\rho_i} \\
        & \geq \frac{1}{2^{|I_4|}}e^{- \delta_5\sum_{i \in I_4}\omega(\rho_i - 1) - (1 + \frac{1 - \log{\delta_2}}{2\delta_2 - 1 - \delta_1})\sum_{i \in I_4}C_i} \\
        & \geq \frac{1}{2^{|I_4|}}e^{- \delta_5\sum_{i = 0}^{+\infty}\omega(\rho_i - 1) - \frac{2\delta_2 - \delta_1 - \log{\delta_2}}{2\delta_2 - 1 - \delta_1}\sum_{i = 0}^{+\infty}C_i} \\
        & \geq \frac{1}{2^{|I_4|}}e^{- \delta_5\Big(\Psi(\tilde{B}_{0}) + 2\sum_{i = 0}^{+\infty}C_i\Big) - \frac{2\delta_2 - \delta_1 - \log{\delta_2}}{2\delta_2 - 1 - \delta_1}\sum_{i = 0}^{+\infty}C_i},
    \end{split}
    \end{equation}
    where the last inequality is due to \eqref{eq:omega} from Lemma~\ref{lemma_omega}. Combining \eqref{proposition_4_3}, \eqref{proposition_4_4}, \eqref{proposition_4_5} and \eqref{proposition_4_11}, we obtain that
    \begin{equation}\label{proposition_4_12}
    \begin{split}
        & \prod_{i = 0}^{t - 1} \hat{p}_i\hat{n}_i = \prod_{i = 0}^{t_0 - 1} \hat{p}_i\hat{n}_i \prod_{i \in I_3} \hat{p}_i\hat{n}_i \prod_{i \in I_4} \hat{p}_i\hat{n}_i \\
        & \geq \frac{1}{2^t}e^{-\Big(t_0\log{\frac{1}{2\alpha(1 - \beta)}} + (\delta_4\log{\frac{1}{2\alpha(1 - \beta)}} + \delta_5) \Psi(\tilde{B}_{0}) + (2\delta_4\log{\frac{1}{2\alpha(1 - \beta)}} + 2\delta_5 + \frac{2\delta_2 - \delta_1 - \log{\delta_2}}{2\delta_2 - 1 - \delta_1})\sum_{i = 0}^{+\infty}C_i \Big)}.
    \end{split}
    \end{equation}
    Leveraging \eqref{proposition_4_1}, \eqref{proposition_4_2}, \eqref{proposition_4_12} with \eqref{eq:product} from Proposition~\ref{lemma_bound}, we prove that
    \begin{align*}
        & \frac{f(x_{t}) - f(x_*)}{f(x_0) - f(x_*)} \leq \left[1 - \left(\prod_{i = 0}^{t-1} \hat{p}_i \hat{q}_{i} \hat{n}_i\frac{\cos^2(\hat{\theta}_{i})}{\hat{m}_{i}}\right)^{\frac{1}{t}}\right]^{t} = \left[1 - \left(\prod_{i = 0}^{t-1} \hat{p}_i \hat{n}_{i} \prod_{i = 0}^{t-1} \hat{q}_i \prod_{i = 0}^{t-1}\frac{\cos^2(\hat{\theta}_{i})}{\hat{m}_{i}}\right)^{\frac{1}{t}}\right]^{t} \\
        & \leq \left(1 - e^{- \frac{t_0\log{\frac{1}{2\alpha(1 - \beta)}} + (1 + \delta_4\log{\frac{1}{2\alpha(1 - \beta)}} + \delta_5) \Psi(\tilde{B}_{0}) + (3 + 2\delta_4\log{\frac{1}{2\alpha(1 - \beta)}} + 2\delta_5 + \frac{2\delta_2 - \delta_1 - \log{\delta_2}}{2\delta_2 - 1 - \delta_1})\sum_{i = 0}^{+\infty}C_i}{t}}\right)^{t} \\
        & = \left(1 - e^{-\frac{\delta_6 t_0 + \delta_7 \Psi(\tilde{B}_0) + \delta_8 \sum_{i = 0}^{+\infty}C_i}{t}}\right)^{t} \leq \left(\frac{\delta_6 t_0 + \delta_7 \Psi(\tilde{B}_0) + \delta_8 \sum_{i = 0}^{+\infty}C_i}{t}\right)^{t},
    \end{align*}
    where the inequality is due to the fact that $1 - e^{-x} \leq x$ for any $x \in \mathbb{R}$ and $\delta_6, \delta_7, \delta_8$ are defined in Theorem~\ref{thm:superlinear_rate}. Hence, we prove that
    \begin{equation}\label{proof_theorem_superlinear_1}
        \frac{f(x_t) - f(x_*)}{f(x_0) - f(x_*)} \leq \left(\frac{\delta_6 t_0 + \delta_7 \Psi(\tilde{B}_0) + \delta_8 \sum_{i = 0}^{+\infty}C_i}{t}\right)^{t}, \qquad \forall t > t_0.
    \end{equation}
    Using \eqref{sum_of_C_t} from the proof of Theorem~\ref{thm:second_linear_rate} in Appendix~\ref{thm:second_linear_rate_proof}, we have that
    \begin{equation}\label{proof_theorem_superlinear_2}
        \sum_{i = 0}^{+\infty}C_i \leq C_0 \Psi(\bar{B}_{0})+ \frac{3C_0\kappa}{\alpha(1 - \beta)}.
    \end{equation}
    Notice that from \eqref{definition_t_0} in Lemma~\ref{lem:bad_set}, we have that
    \begin{equation}\label{proof_theorem_superlinear_3}
        t_0 = \max\{\Psi(\bar{B}_0), \frac{3\kappa}{\alpha(1 - \beta)}\log{\frac{C_0}{\delta_1}}\} \leq \Psi(\bar{B}_0) + \frac{3\kappa}{\alpha(1 - \beta)}\log{\frac{C_0}{\delta_1}}.
    \end{equation}
    Leveraging \eqref{proof_theorem_superlinear_1}, \eqref{proof_theorem_superlinear_2} and \eqref{proof_theorem_superlinear_3}, we prove the conclusion.

\subsection{Proof of Corollary~\ref{coro:superlinear_rate}}

Using the fact that for $B_0=LI$, we have $ \Psi(\Bar{B}_0) = 0$ and $ \Psi(\Tilde{B}_0) \leq d\kappa$, and for the case that $B_0=\mu I$, we have $ \Psi(\Bar{B}_0) \leq d\log{\kappa}, $ and $ \Psi(\Tilde{B}_0) \leq d\log{\kappa}$, we obtain the corresponding superlinear results for these two conditions. 

\subsection{Specific Values of \texorpdfstring{$\{\delta_i\}_{i = 1}^{8}$}{deltai}}

As we stated before, all the $\{\delta_i\}_{i = 1}^{8}$ are universal constants that only depend on line search parameters $\alpha$ and $\beta$. We can choose specific values of $\alpha$ and $\beta$ to make definitions of $\{\delta_i\}_{i = 1}^{8}$ more clear. If we pick $\alpha = \frac{1}{4}$ and $\beta = \frac{3}{4}$, we have that

\begin{equation*}
    \delta_1 = \frac{1}{6}, \quad \delta_2 = \frac{7}{8}, \quad \delta_3 = 2, \quad \delta_4 = 118, \quad \delta_5 = 14, \quad \delta_6 = \log{8}, \quad \delta_7 = 260, \quad \delta_8 = 524.
\end{equation*}

\section{Complexity of BFGS with the Initialization \texorpdfstring{$B_0 = cI$}{B0 = cI}}\label{complexity cI}
Recall that $c \in [\mu,L]$ by our choice of $c$ in Remark~\ref{remark}. If we choose $B_0 = c I$, then $\Psi(\bar{B}_0) = \Psi(\frac{c}{L}I) = \frac{c}{L}d - d + d\log{\frac{L}{c}}$. Moreover, we have $\Psi(\tilde{B}_0) = \Psi(c \nabla^2{f(x_*)^{-1}}) = c\mathbf{Tr}(\nabla^2{f(x_*)^{-1}}) - d - \log{\mathbf{Det}(c\nabla^2{f(x_*)^{-1}})}$, which is determined by the Hessian matrix $\nabla^2{f(x_*)^{-1}}$. In this case, one can use the upper bounds $\Psi(\bar{B}_0) = d(\frac{c}{L} - 1 + \log{\frac{L}{c}})$ and $\Psi(\tilde{B}_0) = \mathbf{Tr}(c\nabla^2{f(x_*)}^{-1}) - d - \log{\mathbf{Det}(c\nabla^2{f(x_*)}^{-1})} \leq d(\frac{c}{\mu} - 1 + \log{\frac{L}{c}}) $ to simplify the expressions.
    
Applying these values of $\Psi(\bar{B}_0)$ and $\Psi(\tilde{B}_0)$ to our linear convergence result in Theorem~\ref{thm:first_linear_rate} and the superlinear convergence result in Theorem~\ref{thm:superlinear_rate}, we can obtain the following convergence guarantees for $B_0 = c I$: 
\begin{itemize}
    \item For $t \geq  d(\frac{c}{L} - 1 + \log{\frac{L}{c}})$, we have $\frac{f(x_t) - f(x_*)}{f(x_0) - f(x_*)} \leq \left(1 - \frac{2\alpha(1 - \beta)}{3\kappa}\right)^{t}$; 
    \item For $t  = \Omega( d(\frac{c}{\mu} - 1 + \log{\frac{L}{c}})+ C_0d(\frac{c}{L} - 1 + \log{\frac{L}{c}}) + C_0 \kappa)$, we have $\frac{f(x_t) - f(x_*)}{f(x_0) - f(x_*)} \leq \bigl(\bigO\bigl(\frac{d(\frac{c}{\mu} - 1 + \log{\frac{L}{c}})+ C_0d(\frac{c}{L} - 1 + \log{\frac{L}{c}}) + C_0 \kappa}{t} \bigr) \bigr)^{t}$.
\end{itemize}
Moreover, we can derive similar iteration complexity bounds following the same arguments as in Section~\ref{proof_of_complexity}. 
We also include the performance of BFGS with $B_0 = cI$ in our numerical experiments as presented in Figure~\ref{fig:1}. We observe that the performance of BFGS with $B_0 = cI$ is very similar to the convergence curve of BFGS with $B_0 = \mu I$ in our numerical experiments.

\section{Proof of Iteration Complexity}\label{proof_of_complexity}

When $B_0 = LI$, if we regard the line search parameters $\alpha$ and $\beta$ as absolute constants, the first result established in Corollary~\ref{coro:first_linear_rate} leads to a global complexity of $\bigO(\kappa \log{\frac{1}{\epsilon}})$, which is on par with gradient descent. Moreover, the first result in Corollary~\ref{coro:second_linear_rate} implies a complexity of $\bigO\left((d+C_0)\kappa+ \log{\frac{1}{\epsilon}}\right)$, where the first term represents the number of iterations required to attain the linear rate in \eqref{eq:second_linear_rate}, and the second term represents the additional number of iterations needed to achieve the desired accuracy $\epsilon$ from the condition number-independent linear rate. For the analysis of the superlinear convergence rate, we denote that $\Omega_L = d\kappa + C_0 \kappa$. From the first result in Corollary~\ref{coro:superlinear_rate}, we have that
\begin{align*}
    \frac{f(x_t) - f(x_*)}{f(x_0) - f(x_*)} \leq (\frac{\Omega_L}{t})^{t}
\end{align*}
Let $T_*$ be the number such that the inequality $(\frac{\Omega_L}{t})^{t} \leq \epsilon$ above becomes equality. we have
\begin{align*}
    \log{\frac{1}{\epsilon}} = T_* \log{\frac{T_*}{\Omega_L}} \leq T_*(\frac{T_*}{\Omega_L} - 1),
\end{align*}
\begin{align*}
    T_* \geq \frac{\Omega_L + \sqrt{\Omega_L^2 + 4\Omega_L\log{\frac{1}{\epsilon}}}}{2}.
\end{align*}
Hence, we have that
\begin{align*}
    \log{\frac{1}{\epsilon}} = T_* \log{\frac{T_*}{\Omega_L}} \geq T_*\log{\frac{\Omega_L + \sqrt{\Omega_L^2 + 4\Omega_L\log{\frac{1}{\epsilon}}}}{2\Omega_L}} \geq T_*\log{\left(\frac{1}{2} + \sqrt{\frac{1}{4} + \frac{\log{\frac{1}{\epsilon}}}{\Omega_L}}\right)},
\end{align*}
\begin{align*}
    T_* \leq \frac{\log{\frac{1}{\epsilon}}}{\log{\left(\frac{1}{2} + \sqrt{\frac{1}{4} + \frac{\log{\frac{1}{\epsilon}}}{\Omega_L}}\right)}}.
\end{align*}
Hence, to reach the accuracy of $\epsilon$, we need the number of iterations $t$ to be at least
\begin{align*}
    t \geq \frac{\log{\frac{1}{\epsilon}}}{\log{\left(\frac{1}{2} + \sqrt{\frac{1}{4} + \frac{1}{\Omega_L}\log{\frac{1}{\epsilon}}}\right)}}.
\end{align*}
Therefore, the iteration complexity for the case of $B_0 = L I$ is
\begin{equation*}
     \bigO\left(\min\left\{\kappa \log{\frac{1}{\epsilon}}, (d+C_0)\kappa+ \log{\frac{1}{\epsilon}}, \frac{\log{\frac{1}{\epsilon}}}{\log{\left(\frac{1}{2} + \sqrt{\frac{1}{4} + \frac{1}{d\kappa + C_0\kappa}\log{\frac{1}{\epsilon}}}\right)}} \right\}\right).
\end{equation*}
Similarly, in this case of $B_0 = \mu I$, the second result in Corollary~\ref{coro:first_linear_rate} establishes a global complexity of $\bigO\left(d\log \kappa + \kappa \log{\frac{1}{\epsilon}}\right)$, where the first term represents the number of iterations before the linear convergence rate in \eqref{eq:first_linear_rate_mu} begins, and the second term arises from the linear rate itself. Additionally, following the same argument, the second result in Corollary~\ref{coro:second_linear_rate} indicates a complexity of $\bigO(C_0d\log{\kappa} + C_0 \kappa + \log{\frac{1}{\epsilon}})$. Here, the first term accounts for the wait time until the convergence rate takes effect, and the second term is associated with the condition number-independent linear rate. For the superlinear convergence rate, when $B_0 = \mu I$, to reach the accuracy of $\epsilon$, we need the number of iterations $t$ to be at least
\begin{align*}
    t \geq \frac{\log{\frac{1}{\epsilon}}}{\log{\left(\frac{1}{2} + \sqrt{\frac{1}{4} + \frac{1}{\Omega_{\mu}}\log{\frac{1}{\epsilon}}}\right)}},
\end{align*}
where $\Omega_{\mu} = C_0d\log{\kappa} + C_0 \kappa$. The proof is the same as the proof for the case of $B_0 = L I$. Therefore, the iteration complexity for the case of $B_0 = \mu I$ is
\begin{equation*}
    \! \bigO\!\left(\!\min\left\{ d\log \kappa + \kappa \log{\frac{1}{\epsilon}}, C_0(d\log{\kappa} + \kappa) + \log{\frac{1}{\epsilon}}, \frac{\log{\frac{1}{\epsilon}}}{\log{\left(\frac{1}{2}\! +\! \sqrt{\frac{1}{4} \!+\! \frac{1}{C_0(d\log{\kappa} + \kappa)}\log{\frac{1}{\epsilon}}}\right)}} \right\} \right).
\end{equation*}

\section{Log Bisection Algorithm for Weak Wolfe Conditions}\label{sec:algorithm}

\begin{algorithm}
\caption{Log Bisection Algorithm for Weak Wolfe Conditions}\label{algo_wolfe} 
\begin{algorithmic}[1] 
{\REQUIRE Initial step size $\eta^{(0)} = 1$, $\eta^{(0)}_{min} = 0$, $\eta^{(0)}_{max} = +\infty$
\FOR {$i = 0, 1, 2, \ldots$}
    \IF{$f(x_t + \eta^{(i)} d_t) > f(x_t) + \alpha \eta^{(i)} \nabla{f}(x_t)^\top d_t$}\label{line:sufficient_decrease}
        \STATE 
        Set $\eta^{(i + 1)}_{max} = \eta^{(i)}$ and  $\eta^{(i + 1)}_{min} = \eta^{(i)}_{min}$
        \IF{$\eta^{(i)}_{min} = 0$}
            \STATE $\eta^{(i + 1)} = (\frac{1}{2})^{2^{i + 1} - 1}$
        \ELSE
            \STATE $\eta^{(i + 1)} = \sqrt{\eta^{(i + 1)}_{max}\eta^{(i + 1)}_{min}}$
        \ENDIF
    \ELSIF{$\nabla{f}(x_t + \eta^{(i)} d_t)^\top d_t < \beta \nabla{f}(x_t)^\top d_t$}\label{line:curvature}
        \STATE Set $\eta^{(i + 1)}_{max} = \eta^{(i)}_{max}$ and $\eta^{(i + 1)}_{min} = \eta^{(i)}$
        \IF{$\eta^{(i)}_{max} = +\infty$}
            \STATE $\eta^{(i + 1)} = 2^{^{2^{i + 1} - 1}}$
        \ELSE
            \STATE $\eta^{(i + 1)} = \sqrt{\eta^{(i + 1)}_{max}\eta^{(i + 1)}_{min}}$
        \ENDIF
    \ELSE
        \STATE Return $\eta^{(i)}$
    \ENDIF
\ENDFOR}
\end{algorithmic}
\end{algorithm}

\section{Results and Discussion on the Bisection Scheme for Line Search in Section~\ref{sec:complexity}}\label{sec:proof_bisection}

\subsection{Proof of Lemma~\ref{lemma_step_difference}}

First, we present major results concerning the complexity of the bisection method, which specifies a range of values that meet the conditions in \eqref{sufficient_decrease} and \eqref{curvature_condition}.

\begin{lemma}\label{lemma_step_difference}
    Suppose that Assumptions~\ref{ass_str_cvx}, \ref{ass_smooth} and \ref{ass_Hess_lip} hold. Recall the definition of $\rho_t$ in \eqref{definition_rho} and $C_t$ in \eqref{distance}. At iteration $t$, there is unique $\eta_r > 0$ such that the sufficient decrease condition \eqref{sufficient_decrease} is equity for $\eta_r$, i.e.,
    \begin{align}\label{definition_step_r}
        f(x_t + \eta_r d_t) = f(x_t) + \alpha \eta_r \nabla{f(x_t)}^\top d_t.
    \end{align}
    Then, $\eta_t$ satisfies the sufficient decrease condition \eqref{sufficient_decrease} if and only if $\eta_t \leq \eta_r$. We also have that
    \begin{equation}\label{r_bounds}
        \frac{2(1 - \alpha)}{1 + C_t}\rho_t \leq \eta_r \leq 2(1 - \alpha)(1 + C_t)\rho_t.
    \end{equation}
    Similarly, there is also unique $\eta_l > 0$ such that the curvature condition \eqref{curvature_condition} is equity for $\eta_l$, i.e.,
    \begin{align}\label{definition_step_l}
        \nabla{f(x_t + \eta_l d_t)}^\top d_t = \beta \nabla{f(x_t)}^\top d_t. 
    \end{align}
    Then, $\eta_t$ satisfies the curvature condition \eqref{curvature_condition} if and only if $\eta_t \geq \eta_l$. Moreover, we have that
    \begin{equation}\label{step_difference}
        \frac{\eta_r}{\eta_l} \geq 1 + \frac{\beta - \alpha}{(1 - \beta)(1 + 2C_t)} > 1.
    \end{equation}
\end{lemma}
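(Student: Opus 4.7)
The plan is to handle the four parts of the lemma by combining scalar Taylor and mean-value arguments along the ray $\eta\mapsto x_t+\eta d_t$ with the Hessian comparison bounds from Lemma~\ref{lemma_Hessian}. Throughout, we use that $g_t^\top d_t = -g_t^\top B_t^{-1} g_t < 0$ (since $B_t$ is positive definite), and abbreviate $\psi(\eta) := \nabla f(x_t + \eta d_t)^\top d_t$ and $h(\eta) := f(x_t + \eta d_t) - f(x_t) - \alpha \eta g_t^\top d_t$. For the existence, uniqueness, and characterization of $\eta_r$, note that $h$ is strongly convex with $h(0)=0$, $h'(0)=(1-\alpha)g_t^\top d_t<0$, and $h(\eta)\to +\infty$ as $\eta\to\infty$, so $h$ vanishes at a unique $\eta_r>0$ and satisfies $h(\eta)\leq 0$ iff $\eta\in[0,\eta_r]$---exactly what is needed for \eqref{sufficient_decrease}. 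In parallel, $\psi$ is strictly increasing because $\psi'(\eta) = d_t^\top \nabla^2 f(x_t+\eta d_t) d_t > 0$, satisfies $\psi(0) = g_t^\top d_t < \beta g_t^\top d_t$, and tends to $+\infty$ by strong convexity, so $\psi(\eta)=\beta g_t^\top d_t$ has a unique positive root $\eta_l$ and \eqref{curvature_condition} reduces to $\eta\geq\eta_l$.

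For the bounds \eqref{r_bounds}, Taylor's formula with Lagrange remainder produces a point $\xi_r$ on the segment from $x_t$ to $x_t + \eta_r d_t$ satisfying
\[
    0 \;=\; h(\eta_r) \;=\; (1-\alpha)\,\eta_r\, g_t^\top d_t + \tfrac{1}{2}\eta_r^2\, d_t^\top \nabla^2 f(\xi_r) d_t,
\]
so that $\eta_r = -2(1-\alpha)g_t^\top d_t / (d_t^\top \nabla^2 f(\xi_r) d_t)$. Since $f(x_t+\eta_r d_t) = f(x_t)+\alpha\eta_r g_t^\top d_t < f(x_t)$, I can invoke \eqref{eq:middle_hessian_vs_H*} with the role of ``$x_{t+1}$'' played by $x_t+\eta_r d_t$, which yields $(1+C_t)^{-1}\nabla^2 f(x_*) \preceq \nabla^2 f(\xi_r) \preceq (1+C_t)\nabla^2 f(x_*)$; combined with $\rho_t = -g_t^\top d_t / (d_t^\top \nabla^2 f(x_*) d_t)$, this immediately gives both inequalities in \eqref{r_bounds}.

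The ratio bound \eqref{step_difference} is the main obstacle. The key idea is to pass from the function-value condition at $\eta_r$ to a directional-derivative condition at an intermediate point, via the mean value theorem: applied to $\eta\mapsto f(x_t+\eta d_t)$ on $[0,\eta_r]$, it produces $\tilde\eta\in(0,\eta_r)$ with $\psi(\tilde\eta)=\alpha g_t^\top d_t$. Since $\psi$ is strictly increasing and $\psi(\eta_l) = \beta g_t^\top d_t < \alpha g_t^\top d_t$ (using $\alpha<\beta$ and $g_t^\top d_t<0$), we obtain the chain $\eta_l<\tilde\eta<\eta_r$, which in particular supplies $\eta_l<\eta_r$ for free. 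I would then write
\[
    (\beta-\alpha)(-g_t^\top d_t) \;=\; \psi(\tilde\eta)-\psi(\eta_l) \;=\; \int_{\eta_l}^{\tilde\eta} d_t^\top \nabla^2 f(x_t+\tau d_t) d_t\,d\tau,
\]
and bound the integrand from above by $(1+2C_t)\,d_t^\top \nabla^2 f(\xi_l) d_t$ via \eqref{eq:g_t_1_vs_g_t_2}, applied with the ``fake'' $x_{t+1} := x_t+\eta_r d_t$ (whose validity $f(\text{fake }x_{t+1})\leq f(x_t)$ was already established): both $\xi_l$ and every $x_t+\tau d_t$ with $\tau\in[\eta_l,\tilde\eta]$ sit on the segment $[x_t, x_t+\eta_r d_t]$, so this bound holds uniformly. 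Plugging this in and invoking the companion Lagrange-remainder identity $\eta_l\cdot d_t^\top \nabla^2 f(\xi_l) d_t = -(1-\beta)g_t^\top d_t$ (from the Lagrange form of $\psi(\eta_l)-\psi(0)=(\beta-1)g_t^\top d_t$) produces $\tilde\eta-\eta_l \geq (\beta-\alpha)\eta_l/[(1-\beta)(1+2C_t)]$, and \eqref{step_difference} follows from $\eta_r \geq \tilde\eta$. The delicate conceptual step is the introduction of $\tilde\eta$; a direct ratio comparison of the Lagrange expressions for $\eta_r$ and $\eta_l$ only yields $\eta_r/\eta_l \geq 2(1-\alpha)/[(1-\beta)(1+2C_t)]$, which need not dominate $1+(\beta-\alpha)/[(1-\beta)(1+2C_t)]$ uniformly in $C_t$.
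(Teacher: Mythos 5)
Your proposal is correct and follows essentially the same route as the paper's proof: the same strongly-convex/monotone scalar functions for existence and uniqueness of $\eta_r$ and $\eta_l$, the same Lagrange-remainder identity for \eqref{r_bounds}, and the same key intermediate point $\tilde\eta$ with $\nabla f(x_t+\tilde\eta d_t)^\top d_t=\alpha\, g_t^\top d_t$ combined with \eqref{eq:g_t_1_vs_g_t_2} applied along the segment $[x_t,\,x_t+\eta_r d_t]$ for \eqref{step_difference}. The only cosmetic difference is that you bound $\psi(\tilde\eta)-\psi(\eta_l)$ via an integral of the second directional derivative where the paper uses a second application of the mean value theorem.
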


\begin{proof}

    Notice that Assumption~\ref{ass_str_cvx} indicates that the objective function $f(x)$ is strongly convex. Consider function $h_{1}(\eta) = f(x_t + \eta d_t) - \alpha \eta \nabla{f(x_t)}^\top d_t$. We observe that this function $h_{1}(\eta)$ is strongly convex and $h_{1}(0) = f(x_t)$, $h'_{1}(0) < 0$. Hence, there is unique $\eta_r > 0$ such that $h_{1}(\eta_r) = f(x_t)$ and $\eta_t \leq \eta_r$ if and only if $f(x_t + \eta_t d_t) \leq f(x_t) + \alpha \eta_t\nabla{f(x_t)}^\top d_t$.

    Denote that $\bar{x}_{t + 1} = x_t + \eta_r d_t$. We know that $f(\bar{x}_{t + 1}) - f(x_t) = \alpha \eta_r g_t^\top d_t$. Since $f(\bar{x}_{t + 1}) - f(x_t) = \eta_r g_t^\top d_t + \frac{1}{2}\eta_r^{2}d_t^\top \nabla^2{f(x_t + \tau (\bar{x}_{t + 1} - x_{t}))} d_t$ for $\tau \in (0, 1)$, we have that
    \begin{equation*}
        \eta_r g_t^\top d_t + \frac{1}{2}\eta_r^{2}d_t^\top \nabla^2{f(x_t + \tau (\bar{x}_{t + 1} - x_{t}))} d_t = \alpha \eta_r g_t^\top d_t,
    \end{equation*}
    \begin{equation*}
        \eta_r = 2(1 - \alpha)\frac{- g_t^\top d_t}{d_t^\top \nabla^2{f(x_t + \tau (\bar{x}_{t + 1} - x_{t}))} d_t}.
    \end{equation*}
    which leads to
    \begin{equation*}
    \begin{split}
        & \eta_r = 2(1 - \alpha)\frac{- g_t^\top d_t}{d_t^\top \nabla^2{f(x_t + \tau (\bar{x}_{t + 1} - x_{t}))} d_t} \leq 2(1 - \alpha)(1 + C_t)\frac{- g_t^\top d_t}{d_t^\top \nabla^2{f(x_*)} d_t} \\
        & = 2(1 - \alpha)(1 + C_t)\frac{-g_t^\top d_t}{\|\tilde{d_t}\|^2} = 2(1 - \alpha)(1 + C_t)\rho_t.
    \end{split}
    \end{equation*}
    \begin{equation*}
        \eta_r = 2(1 - \alpha)\frac{- g_t^\top d_t}{d_t^\top \nabla^2{f(x_t + \tau (\bar{x}_{t + 1} - x_{t}))} d_t} \geq \frac{2(1 - \alpha)}{1 + C_t}\frac{- g_t^\top d_t}{d_t^\top \nabla^2{f(x_*)} d_t} = \frac{2(1 - \alpha)}{1 + C_t}\rho_t.
    \end{equation*}
    where we use the \eqref{eq:middle_hessian_vs_H*} from Lemma~\ref{lemma_Hessian} and the fact that $f(\bar{x}_{t + 1}) = f(x_t) + \alpha \eta_r g_t^\top d_t \leq f(x_t)$. Hence, we prove the results in \eqref{r_bounds}.

    Similarly, consider function $h_{2}(\eta) = \nabla{f(x_t + \eta d_t)}^\top d_t$. We observe that this function $h_{2}(\eta)$ is strictly increasing function for $\eta \geq 0$ and $h_{2}(0) = \nabla{f(x_t)}^\top d_t < \beta \nabla{f(x_t)}^\top d_t$, $h_{2}(\eta_{exact}) = \nabla{f(x_t + \eta_{exact}d_t)}^\top d_t = 0 > \beta \nabla{f(x_t)}^\top d_t$ where $\eta_{exact} := \argmin_{\eta > 0} f(x_t + \eta d_t)$ is the exact line search step size satisfying $\nabla{f(x_t + \eta_{exact}d_t)}^\top d_t = 0$. Hence, there is unique $\eta_l \in (0, \eta_{exact})$ such that $h_{2}(\eta_l) = \beta \nabla{f(x_t)}^\top d_t$ and $\eta_t \geq \eta_l$ if and only if $\nabla{f(x_t + \eta_t d_t)}^\top d_t \geq \beta \nabla{f(x_t)}^\top d_t$. 

    Notice that
    \begin{align*}
        f(x_t + \eta_r d_t) = f(x_t) + \alpha \eta_r \nabla{f(x_t)}^\top d_t.
    \end{align*}
    Using mean value theorem, we know there exists $\bar{\eta} \in (0, \eta_r)$ such that 
    \begin{align*}
        f(x_t + \eta_r d_t) = f(x_t) + \eta_r \nabla{f(x_t + \bar{\eta} d_t)}^\top d_t.
    \end{align*}
    The above two equities indicates that
    \begin{align*}
        \nabla{f(x_t + \bar{\eta} d_t)}^\top d_t = \alpha \nabla{f(x_t)}^\top d_t.
    \end{align*}
    Recall that
    \begin{align*}
        \nabla{f(x_t + \eta_l d_t)}^\top d_t = \beta \nabla{f(x_t)}^\top d_t. 
    \end{align*}
    Combing the above two equities, we obtain that
    \begin{align*}
        (\nabla{f(x_t + \bar{\eta} d_t)} - \nabla{f(x_t + \eta_l d_t)})^\top d_t = -\nabla{f(x_t)}^\top d_t(\beta - \alpha).
    \end{align*}
    Using mean value theorem again, we know there exists $\tilde{\eta} \in (\eta_l, \bar{\eta})$ such that
    \begin{align*}
        (\nabla{f(x_t + \bar{\eta} d_t)} - \nabla{f(x_t + \eta_l d_t)})^\top d_t = (\bar{\eta} - \eta_l)d_t^\top \nabla^2{f}(x_t + \tilde{\eta}d_t) d_t.
    \end{align*}
    Leveraging the above two equities, we obtain that
    \begin{align*}
        \bar{\eta} - \eta_l = (\beta - \alpha)\frac{-\nabla{f(x_t)}^\top d_t}{d_t^\top \nabla^2{f}(x_t + \tilde{\eta}d_t) d_t}.
    \end{align*}
    Notice that $\bar{\eta} \leq \eta_r$, we have that
    \begin{equation}\label{step_proof_1}
        \eta_r - \eta_l \geq \bar{\eta} - \eta_l = (\beta - \alpha)\frac{-\nabla{f(x_t)}^\top d_t}{d_t^\top \nabla^2{f}(x_t + \tilde{\eta}d_t) d_t}.
    \end{equation}
    Recall the definition of $\eta_l$ in \eqref{definition_step_l}, we have that
    \begin{equation*}
        (\nabla{f(x_t + \eta_l d_t)} - \nabla{f(x_t)})^\top d_t = -(1 - \beta) \nabla{f(x_t)}^\top d_t.
    \end{equation*}
    Notice that there exists $\hat{\eta} \in (0, \eta_l)$, such that
    \begin{equation*}
        (\nabla{f(x_t + \eta_l d_t)} - \nabla{f(x_t)})^\top d_t = \eta_l d_t^\top \nabla^2{f(x_t + \hat{\eta}d_t)} d_t.
    \end{equation*}
    Combing the above two equities, we obtain that
    \begin{equation}\label{step_proof_2}
        \eta_l = \frac{-(1 - \beta) \nabla{f(x_t)}^\top d_t}{d_t^\top \nabla^2{f(x_t + \hat{\eta}d_t)} d_t}.
    \end{equation}
    Leveraging \eqref{step_proof_1} and \eqref{step_proof_2}, we have that
    \begin{equation*}
        \frac{\eta_r}{\eta_l} = 1 + \frac{\eta_r - \eta_l}{\eta_l} \geq 1 + \frac{(\beta - \alpha)d_t^\top \nabla^2{f(x_t + \hat{\eta}d_t)} d_t}{(1 - \beta)d_t^\top \nabla^2{f(x_t + \tilde{\eta}d_t)} d_t}.
    \end{equation*}
    Recall that $\bar{x}_{t + 1} = x_t + \eta_r d_t$ and notice that $\hat{\eta} \leq \eta_r$, $\tilde{\eta} \leq \eta_r$. We have that $x_t + \hat{\eta}d_t = x_t + \hat{\tau}(\bar{x}_{t + 1} - x_t)$ and $x_t + \tilde{\eta}d_t = x_t + \tilde{\tau}(\bar{x}_{t + 1} - x_t)$ with $\hat{\tau} = \frac{\hat{\eta}}{\eta_r} \in (0, 1)$ and $\tilde{\tau} = \frac{\tilde{\eta}}{\eta_r} \in (0, 1)$. Since $f(\bar{x}_{t + 1}) = f(x_t + \eta_r d_t) = f(x_t) + \alpha \eta_r \nabla{f(x_t)}^\top d_t \leq f(x_t)$, applying \eqref{eq:g_t_1_vs_g_t_2} in Lemma~\ref{lemma_Hessian}, we prove the conclusion that
    \begin{equation*}
    \begin{split}
        \frac{\eta_r}{\eta_l} & \geq 1 + \frac{(\beta - \alpha)d_t^\top \nabla^2{f(x_t + \hat{\eta}d_t)} d_t}{(1 - \beta)d_t^\top \nabla^2{f(x_t + \tilde{\eta}d_t)} d_t} \\ & = 1 + \frac{(\beta - \alpha)d_t^\top \nabla^2{f(x_t + \hat{\tau}(\bar{x}_{t + 1} - x_t))} d_t}{(1 - \beta)d_t^\top \nabla^2{f(x_t + \tilde{\tau}(\bar{x}_{t + 1} - x_t))} d_t} \geq 1 + \frac{\beta - \alpha}{(1 - \beta)(1 + 2C_t)}.
    \end{split}
    \end{equation*}

\end{proof}

\subsection{Bound on the Number of Inner Loops}

\begin{proposition}\label{proposition_stepsize}
    Suppose that Assumptions~\ref{ass_str_cvx}, \ref{ass_smooth} and \ref{ass_Hess_lip} hold. Consider the BFGS method with inexact line search defined in \eqref{sufficient_decrease} and \eqref{curvature_condition} and we choose the step size $\eta_t$ according to Algorithm~\ref{algo_wolfe}. At iteration $t$, denote $\lambda_t$ as the number of loops in Algorithm~\ref{algo_wolfe} to terminate and return the $\eta_t$ satisfying the Wolfe conditions \eqref{sufficient_decrease} and \eqref{curvature_condition}. Then $\lambda_t$ is finite and upper bounded by
    \begin{equation}
    \begin{split}
        \lambda_t & \leq 2 + \log_{2}\Big(1 + \frac{(1 - \beta)(1 + 2C_t)}{\beta - \alpha}\Big) \\
        & \quad + 2\log_{2}\Big(1 + \log_{2}\big(2(1 - \alpha)(1 + C_t)\big) + \max\{\log_{2}\rho_t, \log_{2}\frac{1}{\rho_t}\}\Big).
    \end{split}
    \end{equation}
\end{proposition}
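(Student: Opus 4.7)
The plan is to track Algorithm~\ref{algo_wolfe} in the logarithmic scale, where the outer doubling/halving and the inner geometric-mean update both become additive, and to split the analysis into an expansion phase and a log-bisection phase. By Lemma~\ref{lemma_step_difference} the set of admissible step sizes is an interval $[\eta_l,\eta_r]$; write $\ell:=\log_2\eta_l$, $r:=\log_2\eta_r$, and $x:=(\beta-\alpha)/((1-\beta)(1+2C_t))$, so that $r-\ell\geq\log_2(1+x)$. Using~\eqref{r_bounds}, the bound $\eta_l\leq\eta_r$, and the standing hypothesis $\alpha<\tfrac12$, one obtains $\max\{\ell,-r\}\leq A+R$, where $A:=\log_2(2(1-\alpha)(1+C_t))$ and $R:=\max\{\log_2\rho_t,\log_2(1/\rho_t)\}$.

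First I would bound the expansion phase, during which one of $\eta_{\min}^{(i)},\eta_{\max}^{(i)}$ is still the trivial boundary value ($0$ or $+\infty$). In this regime the assignments in Algorithm~\ref{algo_wolfe} force $\log_2\eta^{(i)}=\pm(2^i-1)$, with the sign determined by which of the two Armijo-Wolfe conditions fails at $\eta^{(0)}=1$. Phase~1 terminates at the first index $N_1$ for which $\log_2\eta^{(N_1)}$ lands in $[\ell,r]$ or overshoots past it; by the bound in the previous paragraph, $N_1\leq \lceil\log_2(1+A+R)\rceil$. If termination occurs inside $[\ell,r]$ we are done; otherwise the algorithm enters phase~2.

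Next I would bound the log-bisection phase. When it begins, the two most recently tested points bracket $[\ell,r]$ and are separated in log scale by exactly $W_0=2^{N_1-1}$ (the difference between $2^{N_1}-1$ and $2^{N_1-1}-1$). Each subsequent loop tests the geometric midpoint, which in log scale halves the current bracket width while preserving the invariant that the surviving half still contains $[\ell,r]$. The key observation is a short interval argument: once the log-width $W$ drops to $2(r-\ell)$ or less, the midpoint is automatically inside $[\ell,r]$; indeed, writing $L_{\min}=\ell-a$, $L_{\max}=r+b$ with $a,b\geq 0$ and $a+b\leq r-\ell$, the midpoint lies within $(a+b)/2\leq (r-\ell)/2$ of each endpoint of $[\ell,r]$. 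Consequently the number of bisection loops is bounded by $N_2\leq 1+\log_2(W_0/(r-\ell))$.

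Combining the two phases gives $\lambda_t\leq N_1+N_2+O(1)$; substituting $W_0=2^{N_1-1}$, $r-\ell\geq\log_2(1+x)$, and $N_1\leq 1+\log_2(1+A+R)$ produces a bound of the form $2\log_2(1+A+R)+\log_2(1/\log_2(1+x))+O(1)$. The last ingredient is the elementary inequality $1/\log_2(1+x)\leq 1+1/x$ for all $x>0$: it is equivalent to $(1+x)\log_2(1+x)\geq x$, i.e.\ $\ln(1+x)\geq \ln 2\cdot(1-1/(1+x))$, which holds with equality at $x=0$ and whose left-hand side has the larger derivative on $(0,\infty)$ because $1/(1+x)\geq\ln 2/(1+x)^2$ whenever $1+x\geq\ln 2$. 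The main obstacle will be the careful bookkeeping of constants; in particular, phase~2 inherits an initial log-width $W_0=2^{N_1-1}$ that is exponential in the phase~1 length, so the phase~2 cost contributes another $\approx N_1$ loops, and this doubling is precisely what produces the factor of $2$ in front of the iterated-logarithm term in the final bound.
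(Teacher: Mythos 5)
Your proposal is correct and follows essentially the same route as the paper's proof: you invoke Lemma~\ref{lemma_step_difference} for the admissible interval $[\eta_l,\eta_r]$ and its relative width, bound the expansion/contraction phase by when the doubly-exponential trial step crosses $\eta_r$ (via \eqref{r_bounds} and $\alpha<\tfrac12$), bound the bisection phase by when the log-width—initialized at $2^{K_1-1}$—drops below $\log_2(\eta_r/\eta_l)$, and close with the same elementary inequality $1/\log(1+x)\leq 1+1/x$. The only cosmetic difference is your ``short interval'' termination criterion for the bisection phase, whereas the paper argues that the bracket always contains $[\eta_l,\eta_r]$ and derives a contradiction from the halving width; both yield the same count and the same factor of $2$ on the iterated-logarithm term.
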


\begin{proof}

    At the first iteration, if $\eta^{(0)} = 1$ satisfies the weak Wolfe conditions \eqref{sufficient_decrease} and \eqref{curvature_condition}, the algorithm terminates and returns the unit step size $\eta_t = 1$. In this case, we have that $\lambda_t = 1$.

    Suppose that at the first iteration, $\eta^{(0)} = 1$ doesn't satisfy the sufficient decrease condition \eqref{sufficient_decrease} but satisfies the curvature condition \eqref{curvature_condition}, we have that $\eta^{(1)}_{max} = +\infty$, $\eta^{(1)}_{min} = 1$ and $\eta^{(1)} = 2$. Assume that in the Algorithm~\ref{algo_wolfe}, $\eta^{(i)}_{max}$ is never set to a finite value and the algorithm never returns. This means that the condition in line 2 is never satisfied, and as a result, we keep repeating steps in line 12. Thus, $\eta^{(i)} = 2^{2^{i} - 1}$ and since the condition in line 2 is never satisfied, we always have that  $f(x_t + \eta^{(i)} d_t) \leq f(x_t) + \alpha \eta^{(i)} \nabla{f}(x_t)^\top d_t$. Notice that $\lim_{i \to \infty}\eta^{(i)} \to +\infty$ and $\nabla{f}(x_t)^\top d_t < 0$. We obtain that $\lim_{i \to \infty}f(x_t + \eta^{(i)} d_t) \to -\infty$, which is a contradiction since $f$ is strongly convex.
    
    Hence, at some point, either the algorithm finds an admissible step size and returns, or $\eta^{(i)}_{max}$ must become finite. Suppose that this happens at iteration $K_1 \geq 1$ of the loop in Algorithm~\ref{algo_wolfe}. Then, we know that $\eta^{(K_1)} = 2^{2^{K_1} - 1}$. In the first case that the algorithm finds an admissible step size and returns $\eta^{K_1}$, $\eta^{K_1}$ satisfies the Armijo-Wolfe conditions and therefore $\eta^{K_1} \leq \eta_r$.  Using the upper bound result in \eqref{r_bounds} from Lemma~\ref{lemma_step_difference}, we obtain that $\eta^{(K_1 )} = 2^{2^{K_1} - 1} \leq \eta_r \leq 2(1 - \alpha)(1 + C_t)\rho_t$, which leads to
    \begin{equation}\label{proposition_stepsize_proof_1}
        \lambda_t = K_1 \leq  \log_{2}\Big(1 + \log_{2}\big(2(1 - \alpha)(1 + C_t)\rho_t\big)\Big).
    \end{equation}
     In the second case that $\eta^{(i)}_{max}$ becomes finite but the algorithm does not terminate, we have that $\eta^{(K_1 - 1)}$ satisfies the sufficient condition \eqref{sufficient_decrease} and $\eta^{(K_1 - 1)} \leq \eta_r$. Similarly, this implies that
    \begin{equation}\label{proposition_stepsize_proof_2}
        K_1 \leq 1 + \log_{2}\Big(1 + \log_{2}\big(2(1 - \alpha)(1 + C_t)\rho_t\big)\Big).
    \end{equation}
    Then, we further go through the log bisection process. Notice that for any iteration $i > K_1$, the sequence $\eta^{(i)}_{max}$ is finite and non-increasing and the sequence $\eta^{(i)}_{min} \geq 1$ and non-decreasing. The log bisection process indicates that
    \begin{equation}\label{proposition_stepsize_proof_3}
        \log_{2}{\frac{\eta^{(i + 1)}_{max}}{\eta^{(i + 1)}_{min}}} = \frac{1}{2}\log_{2}{\frac{\eta^{(i)}_{max}}{\eta^{(i)}_{min}}}, \qquad \forall i > K_1.
    \end{equation}
    The Algorithm~\ref{algo_wolfe} implies that for any $i > K_1$, we have that
    \begin{align*}
        f(x_t + \eta^{(i)}_{max} d_t) & > f(x_t) + \alpha \eta^{(i)}_{max} \nabla{f}(x_t)^\top d_t, \qquad \nabla{f}(x_t + \eta^{(i)}_{min} d_t)^\top d_t & < \beta \nabla{f}(x_t)^\top d_t.
    \end{align*}
    Hence, we know that for any $i > K_1$, $\eta^{(i)}_{max} \geq \eta_r$ and $\eta^{(i)}_{min} \leq \eta_l$ where $\eta_r, \eta_l$ are defined in \eqref{definition_step_r}, \eqref{definition_step_l} from Lemma~\ref{lemma_step_difference}. Therefore, using result \eqref{step_difference} from Lemma~\ref{lemma_step_difference}, we have that for any $j \geq 1$, 
    \begin{equation}\label{proposition_stepsize_proof_4}
        \log_{2}{\frac{\eta^{(K_1 + j)}_{max}}{\eta^{(K_1 + j)}_{min}}} \geq \log_{2}{\frac{\eta_r}{\eta_l}} > 0.
    \end{equation}
    Notice that \eqref{proposition_stepsize_proof_3} implies that
    \begin{equation}\label{proposition_stepsize_proof_5}
        \log_{2}{\frac{\eta^{(K_1 + j)}_{max}}{\eta^{(K_1 + j)}_{min}}} = \frac{1}{2^{j - 1}}\log_{2}{\frac{\eta^{(K_1 + 1)}_{max}}{\eta^{(K_1 + 1)}_{min}}},
    \end{equation}
    which leads to $0 = \lim_{j \to +\infty}\frac{1}{2^{j - 1}}\log_{2}{\frac{\eta^{(K_1 + 1)}_{max}}{\eta^{(K_1 + 1)}_{min}}} = \lim_{j \to +\infty}\log_{2}{\frac{\eta^{(K_1 + j)}_{max}}{\eta^{(K_1 + j)}_{min}}} \geq \log_{2}{\frac{\eta_r}{\eta_l}} > 0$. This is a contradiction. Hence, Algorithm~\ref{algo_wolfe} must terminate after finite number of loops. Now suppose that Algorithm~\ref{algo_wolfe} terminates after $K_1 + \Gamma_1$ iterations, \eqref{proposition_stepsize_proof_4} and \eqref{proposition_stepsize_proof_5} indicate that when $\Gamma_1 \geq 1$, we have
    \begin{equation}\label{proposition_stepsize_proof_6}
        \frac{1}{2^{\Gamma_1 - 1}}\log_{2}{\frac{\eta^{(K_1 + 1)}_{max}}{\eta^{(K_1 + 1)}_{min}}} = \log_{2}{\frac{\eta^{(K_1 + \Gamma_1)}_{max}}{\eta^{(K_1 + \Gamma_1)}_{min}}} \geq \log_{2}{\frac{\eta_r}{\eta_l}} > \log_{2}{\Big(1 + \frac{\beta - \alpha}{(1 - \beta)(1 + 2C_t)}\Big)}
    \end{equation}
    where the last inequality holds since \eqref{step_difference} in Lemma~\ref{lemma_step_difference}. Notice that $\eta^{(K_1 + 1)}_{max} = 2^{2^{K_1} - 1}$ and $\eta^{K_1 + 1}_{min} = 2^{2^{K_1 - 1} - 1}$. Hence, we obtain that
    \begin{equation}\label{proposition_stepsize_proof_7}
        \log_{2}{\frac{\eta^{(K_1 + 1)}_{max}}{\eta^{(K_1 + 1)}_{min}}} = 2^{K_1 - 1} \leq 1 + \log_{2}\big(2(1 - \alpha)(1 + C_t)\rho_t\big).
    \end{equation}
    Combing \eqref{proposition_stepsize_proof_6}, \eqref{proposition_stepsize_proof_7} and using $\log{x} \geq 1 - \frac{1}{x}$, we have that
    \begin{equation}\label{proposition_stepsize_proof_8}
    \begin{split}
        \Gamma_1 & \leq 1 + \log_{2}\Big(1 + \log_{2}\big(2(1 - \alpha)(1 + C_t)\rho_t\big)\Big) - \log_{2}\log_{2}{\Big(1 + \frac{\beta - \alpha}{(1 - \beta)(1 + 2C_t)}\Big)} \\
        & \leq 1 + \log_{2}\Big(1 + \log_{2}\big(2(1 - \alpha)(1 + C_t)\rho_t\big)\Big) - \log_{2}\log{\Big(1 + \frac{\beta - \alpha}{(1 - \beta)(1 + 2C_t)}\Big)} \\
        & \leq 1 + \log_{2}\Big(1 + \log_{2}\big(2(1 - \alpha)(1 + C_t)\rho_t\big)\Big) - \log_{2}\Big(1 - \frac{1}{1 + \frac{\beta - \alpha}{(1 - \beta)(1 + 2C_t)}}\Big) \\
        & = 1 + \log_{2}\Big(1 + \log_{2}\big(2(1 - \alpha)(1 + C_t)\rho_t\big)\Big) + \log_{2}\Big(1 + \frac{(1 - \beta)(1 + 2C_t)}{\beta - \alpha}\Big).
    \end{split}
    \end{equation}
    Leveraging \eqref{proposition_stepsize_proof_2} and \eqref{proposition_stepsize_proof_8}, we prove that
    \begin{equation}\label{proposition_stepsize_proof_9}
    \begin{split}
        \lambda_t & = K_1 + \Gamma_1 \\
        & \leq 2 + 2\log_{2}\Big(1 + \log_{2}\big(2(1 - \alpha)(1 + C_t)\rho_t\big)\Big) + \log_{2}\Big(1 + \frac{(1 - \beta)(1 + 2C_t)}{\beta - \alpha}\Big).
    \end{split}
    \end{equation}
    Similarly, suppose that at the first iteration, $\eta^{(0)} = 1$ satisfies the sufficient decrease condition \eqref{sufficient_decrease} but doesn't satisfy the curvature condition \eqref{curvature_condition}, we have that $\eta^{(1)}_{max} = 1$, $\eta^{(1)}_{min} = 0$ and $\eta^{(1)} = \frac{1}{2}$. Assume that in the Algorithm~\ref{algo_wolfe}, $\eta^{(i)}_{min}$ is never set to a positive value and the algorithm never returns. This means that the condition in line 2 is always satisfied, and as a result, we keep repeating steps in line 5. Thus, $\eta^{(i)} = (\frac{1}{2})^{2^{i} - 1}$ and since the condition in line 2 is always satisfied, we have that  $f(x_t + \eta^{(i)} d_t) > f(x_t) + \alpha \eta^{(i)} \nabla{f}(x_t)^\top d_t$. Therefore, we know that $\eta^{(i)} \geq \eta_r$ where $\eta_r > 0$ is defined in \eqref{definition_step_r} from Lemma~\ref{lemma_step_difference}. Notice that $\eta^{(i)} \geq \eta_r > 0$ for any $i$ and $\lim_{i \to \infty}\eta^{(i)} = 0$, this leads to a contradiction. 
    
    Hence, at some point either the algorithm returns a step size satisfying the weak Wolfe conditions or $\eta^{(i)}_{min}$ must become positive. Suppose that this happens at iteration $K_2 \geq 1$ of the loop in Algorithm~\ref{algo_wolfe}. Then, we know that $\eta^{(K_2)} = (\frac{1}{2})^{2^{K_2} - 1}$. 

    In the first case that the algorithm finds an admissible step size and returns $\eta^{K_2}$, $\eta^{K_2}$ satisfies the Armijo-Wolfe conditions and therefore $\eta^{K_2} \leq \eta_r$.  Using the upper bound result in \eqref{r_bounds} from Lemma~\ref{lemma_step_difference}, we obtain that $\eta^{(K_2 )} = 2^{2^{K_2} - 1} \leq \eta_r \leq 2(1 - \alpha)(1 + C_t)\rho_t$, which leads to
    \begin{equation}\label{proposition_stepsize_proof_10}
        \lambda_t = K_2 \leq  \log_{2}\Big(1 + \log_{2}\big(2(1 - \alpha)(1 + C_t)\rho_t\big)\Big).
    \end{equation}
     In the second case that $\eta^{(i)}_{min}$ becomes positive but the algorithm does not terminate, we have that $\eta^{(K_2 - 1)}$ doesn't satisfy the sufficient condition \eqref{sufficient_decrease} and $\eta^{(K_2 - 1)} \geq \eta_r$. Using the lower bound result in \eqref{r_bounds} from Lemma~\ref{lemma_step_difference}, we obtain that $\eta^{(K_2 - 1)} = (\frac{1}{2})^{2^{K_2 - 1} - 1} \geq \eta_r \geq \frac{2(1 - \alpha)}{1 + C_t}\rho_t$, which leads to
    \begin{equation}\label{proposition_stepsize_proof_11}
        K_2 \leq 1 + \log_{2}\Big(1 + \log_{2}\frac{1 + C_t}{2(1 - \alpha)\rho_t}\Big).
    \end{equation}
    Then, we further go through the log bisection process. Using the same techniques, we can assume that Algorithm~\ref{algo_wolfe} terminates after $K_2 + \Gamma_2$ iterations, where $\Gamma_2 \geq 1$ satisfies that
    \begin{equation}\label{proposition_stepsize_proof_12}
        \frac{1}{2^{\Gamma_2 - 1}}\log_{2}{\frac{\eta^{(K_2 + 1)}_{max}}{\eta^{(K_2 + 1)}_{min}}} = \log_{2}{\frac{\eta^{(K_2 + \Gamma_2)}_{max}}{\eta^{(K_2 + \Gamma_2)}_{min}}} \geq \log_{2}{\frac{\eta_r}{\eta_l}} > \log_{2}{\Big(1 + \frac{\beta - \alpha}{(1 - \beta)(1 + 2C_t)}\Big)}
    \end{equation}
    where the last inequality holds since \eqref{step_difference} in Lemma~\ref{lemma_step_difference}. Notice that $\eta^{(K_2 + 1)}_{max} = (\frac{1}{2})^{2^{K_2 - 1} - 1}$ and $\eta^{K_2 + 1}_{min} = (\frac{1}{2})^{2^{K_2} - 1}$. Hence, we obtain that
    \begin{equation}\label{proposition_stepsize_proof_13}
        \log_{2}{\frac{\eta^{(K_2 + 1)}_{max}}{\eta^{(K_2 + 1)}_{min}}} = 2^{K_2 - 1} \leq 1 + \log_{2}\frac{1 + C_t}{2(1 - \alpha)\rho_t}.
    \end{equation}
    Combing \eqref{proposition_stepsize_proof_12}, \eqref{proposition_stepsize_proof_13} and using $\log{x} \geq 1 - \frac{1}{x}$, we have that
    \begin{equation}\label{proposition_stepsize_proof_14}
    \begin{split}
        \Gamma_2 & \leq 1 + \log_{2}\Big(1 + \log_{2}\frac{1 + C_t}{2(1 - \alpha)\rho_t}\Big) - \log_{2}\log_{2}{\Big(1 + \frac{\beta - \alpha}{(1 - \beta)(1 + 2C_t)}\Big)} \\
        & \leq 1 + \log_{2}\Big(1 + \log_{2}\frac{1 + C_t}{2(1 - \alpha)\rho_t}\Big) - \log_{2}\log{\Big(1 + \frac{\beta - \alpha}{(1 - \beta)(1 + 2C_t)}\Big)} \\
        & \leq 1 + \log_{2}\Big(1 + \log_{2}\frac{1 + C_t}{2(1 - \alpha)\rho_t}\Big) - \log_{2}\Big(1 - \frac{1}{1 + \frac{\beta - \alpha}{(1 - \beta)(1 + 2C_t)}}\Big) \\
        & = 1 + \log_{2}\Big(1 + \log_{2}\frac{1 + C_t}{2(1 - \alpha)\rho_t}\Big) + \log_{2}\Big(1 + \frac{(1 - \beta)(1 + 2C_t)}{\beta - \alpha}\Big).
    \end{split}
    \end{equation}
    Leveraging \eqref{proposition_stepsize_proof_11} and \eqref{proposition_stepsize_proof_14}, we prove that
    \begin{equation}\label{proposition_stepsize_proof_15}
    \begin{split}
        \lambda_t & = K_2 + \Gamma_2 \\
        & \leq 2 + 2\log_{2}\Big(1 + \log_{2}\frac{1 + C_t}{2(1 - \alpha)\rho_t}\Big)  + \log_{2}\Big(1 + \frac{(1 - \beta)(1 + 2C_t)}{\beta - \alpha}\Big).
    \end{split}
    \end{equation}
    Notice that $\alpha < \frac{1}{2}$ and thus $\frac{1}{2(1 - \alpha)} < 2(1 - \alpha)$, combining \eqref{proposition_stepsize_proof_1}, \eqref{proposition_stepsize_proof_9}, \eqref{proposition_stepsize_proof_10} and \eqref{proposition_stepsize_proof_15}, we prove the final conclusion
    \begin{equation*}
    \begin{split}
        \lambda_t & \leq 2 + \log_{2}\Big(1 + \frac{(1 - \beta)(1 + 2C_t)}{\beta - \alpha}\Big) \\
        & \quad + 2\log_{2}\Big(1 + \log_{2}\big(2(1 - \alpha)(1 + C_t)\big) + \max\{\log_{2}\rho_t, \log_{2}\frac{1}{\rho_t}\}\Big).
    \end{split}
    \end{equation*}
\end{proof}

\subsection{Proof of Theorem~\ref{thm:line_search}}

    Using result from Proposition~\ref{proposition_stepsize}, we have that
    \begin{equation}\label{thm:line_search_proof_1}
    \begin{split}
        \Lambda_t = \frac{1}{t}\sum_{i = 0}^{t - 1} \lambda_i & \leq 2 + \frac{1}{t}\sum_{i = 0}^{t - 1}\log_{2}\Big(1 + \frac{(1 - \beta)(1 + 2C_i)}{\beta - \alpha}\Big) \\
        & \quad + \frac{2}{t}\sum_{i = 0}^{t - 1}\log_{2}\Big(1 + \log_{2}\big(2(1 - \alpha)(1 + C_i)\big) + \max\{\log_{2}\rho_i, \log_{2}\frac{1}{\rho_i}\}\Big).
    \end{split}
    \end{equation}
    Using Jensen's inequality, we have that
    \begin{equation}\label{thm:line_search_proof_2}
    \begin{split}
        \frac{1}{t}\sum_{i = 0}^{t - 1}\log_{2}\Big(1 + \frac{(1 - \beta)(1 + 2C_i)}{\beta - \alpha}\Big) \leq \log_{2}\Big(1 + \frac{1 - \beta}{\beta - \alpha} + \frac{2(1 - \beta)}{\beta - \alpha}\frac{\sum_{i = 0}^{t - 1}C_i}{t}\Big).
    \end{split}
    \end{equation}
    \begin{equation}\label{thm:line_search_proof_3}
    \begin{split}
        & \frac{1}{t}\sum_{i = 0}^{t - 1}\log_{2}\Big(1 + \log_{2}\big(2(1 - \alpha)(1 + C_i)\big) + \max\{\log_{2}\rho_i, \log_{2}\frac{1}{\rho_i}\}\Big) \\
        & \leq \log_{2}\Big(1 +\log_{2}2(1 - \alpha) + \frac{1}{t}\sum_{i = 0}^{t - 1}\log_{2}(1 + C_i) + \frac{1}{t}\sum_{i = 0}^{t - 1}\max\{\log_{2}\rho_i, \log_{2}\frac{1}{\rho_i}\}\Big) \\
        & \leq \log_{2}\Big(1 +\log_{2}2(1 - \alpha) + \log_{2}\big(1 + \frac{\sum_{i = 0}^{t - 1}C_i}{t}) + \frac{1}{t}\sum_{i = 0}^{t - 1}\max\{\log_{2}\rho_i, \log_{2}\frac{1}{\rho_i}\}\Big).
    \end{split}
    \end{equation}
    We also have that
    \begin{equation}\label{thm:line_search_proof_4}
    \begin{split}
        & \quad \frac{1}{t}\sum_{i = 0}^{t - 1}\max\{\log_{2}\rho_i, \log_{2}\frac{1}{\rho_i}\} = \frac{1}{t}\sum_{i = 0, \rho_i \geq 1}^{t - 1}\log_{2}\rho_i + \frac{1}{t}\sum_{i = 0, 0 \leq \rho_i < 1}^{t - 1}\log_{2}\frac{1}{\rho_i} \\
        & = \frac{1}{t}\sum_{i = 0, \rho_i \geq 2}^{t - 1}\log_{2}\rho_i + \frac{1}{t}\sum_{i = 0, 1 \leq \rho_i < 2}^{t - 1}\log_{2}\rho_i + \frac{1}{t}\sum_{i = 0, \frac{1}{2} < \rho_i < 1}^{t - 1}\log_{2}\frac{1}{\rho_i} + \frac{1}{t}\sum_{i = 0, \rho_i \leq \frac{1}{2}}^{t - 1}\log_{2}\frac{1}{\rho_i} \\
        & \leq 2 + \frac{1}{t}\sum_{i = 0, \rho_i \geq 2}^{t - 1}\log_{2}\rho_i + \frac{1}{t}\sum_{i = 0, \rho_i \leq \frac{1}{2}}^{t - 1}\log_{2}\frac{1}{\rho_i},
    \end{split}
    \end{equation}
    where the inequality is due to $\log_{2}\rho_i \leq 1$ for $\rho_i < 2$ and $\log_{2}\frac{1}{\rho_i} \leq 1$ for $\rho_i > \frac{1}{2}$. Using the definition of $\omega$ and (b) in Lemma~\ref{lemma_omega}, we obtain that
    \begin{equation}\label{thm:line_search_proof_5}
    \begin{split}
        \frac{1}{t}\sum_{i = 0, \rho_i \geq 2}^{t - 1}\log_{2}\rho_i & = \frac{\log_{2}e}{t}\sum_{i = 0, \rho_i \geq 2}^{t - 1}\log\rho_i = \frac{\log_{2}e}{t}\sum_{i = 0, \rho_i \geq 2}^{t - 1}(\rho_i - 1 - \omega(\rho_i - 1)) \\
        & \leq \frac{\log_{2}e}{t}\sum_{i = 0, \rho_i \geq 2}^{t - 1}(\frac{2\rho_i}{\rho_i - 1}\omega(\rho_i - 1) - \omega(\rho_i - 1)) \\
        & = \frac{\log_{2}e}{t}\sum_{i = 0, \rho_i \geq 2}^{t - 1}\frac{\rho_i + 1}{\rho_i - 1}\omega(\rho_i - 1) \leq \frac{3\log_{2}e}{t}\sum_{i = 0, \rho_i \geq 2}^{t - 1}\omega(\rho_i - 1).
    \end{split}
    \end{equation}
    Similarly, using (c) in Lemma~\ref{lemma_omega}, we obtain that
    \begin{equation}\label{thm:line_search_proof_6}
    \begin{split}
        \frac{1}{t}\sum_{i = 0, \rho_i \leq \frac{1}{2}}^{t - 1}\log_{2}\frac{1}{\rho_i} & = \frac{\log_{2}e}{t}\sum_{i = 0, \rho_i \leq \frac{1}{2}}^{t - 1}\log\frac{1}{\rho_i} = \frac{\log_{2}e}{t}\sum_{i = 0, \rho_i \leq \frac{1}{2}}^{t - 1}(\omega(\rho_i - 1) + 1 - \rho_i) \\
        & \leq \frac{\log_{2}e}{t}\sum_{i = 0, \rho_i \leq \frac{1}{2}}^{t - 1}(\omega(\rho_i - 1) + \frac{1+ \rho_i}{1 - \rho_i}\omega(\rho_i - 1)) \\
        & = \frac{\log_{2}e}{t}\sum_{i = 0, \rho_i \leq \frac{1}{2}}^{t - 1}\frac{2}{1 - \rho_i}\omega(\rho_i - 1) \leq \frac{4\log_{2}e}{t}\sum_{i = 0, \rho_i \leq \frac{1}{2}}^{t - 1}\omega(\rho_i - 1).
    \end{split}
    \end{equation}
    Combining \eqref{thm:line_search_proof_4}, \eqref{thm:line_search_proof_5} and \eqref{thm:line_search_proof_6}, we prove that
    \begin{equation}\label{thm:line_search_proof_7}
    \begin{split}
        & \frac{1}{t}\sum_{i = 0}^{t - 1}\max\{\log_{2}\rho_i, \log_{2}\frac{1}{\rho_i}\} \leq 2 + \frac{1}{t}\sum_{i = 0, \rho_i \geq 2}^{t - 1}\log_{2}\rho_i + \frac{1}{t}\sum_{i = 0, \rho_i \leq \frac{1}{2}}^{t - 1}\log_{2}\frac{1}{\rho_i} \\
        & \leq 2 + \frac{4\log_{2}e}{t}\sum_{i = 0}^{t - 1}\omega(\rho_i - 1) \leq 2 + \frac{6}{t}\Big(\Psi(\tilde{B}_{0}) + 2\sum_{i = 0}^{t - 1}C_i\Big) .
    \end{split}
    \end{equation}
    where we use the fact that $\omega(\rho_i - 1) \geq 0$ for any $i \geq 0$ and the last inequality is due to \eqref{eq:omega} in Proposition~\ref{lemma_BFGS_superlinear}. Leveraging \eqref{thm:line_search_proof_1}, \eqref{thm:line_search_proof_2}, \eqref{thm:line_search_proof_3} and \eqref{thm:line_search_proof_7}, we have that
    \begin{equation*}
    \begin{split}
        \Lambda_t & \leq 2 + \log_{2}\Big(1 + \frac{1 - \beta}{\beta - \alpha} + \frac{2(1 - \beta)}{\beta - \alpha}\frac{\sum_{i = 0}^{t - 1}C_i}{t}\Big) \\
        & \quad + 2\log_{2}\Big(3 +\log_{2}2(1 - \alpha) + \log_{2}\big(1 + \frac{\sum_{i = 0}^{t - 1}C_i}{t}) + \frac{6}{t}\big(\Psi(\tilde{B}_{0}) + 2\sum_{i = 0}^{t - 1}C_i\big)\Big) \\
        & \leq 2 + \log_{2}\Big(1 + \frac{1 - \beta}{\beta - \alpha} + \frac{2(1 - \beta)}{\beta - \alpha}\frac{\sum_{i = 0}^{t - 1}C_i}{t}\Big) \\
        & \quad + 2\log_{2}\Big(\log_{2}16(1 - \alpha) + \log_{2}\big(1 + \frac{\sum_{i = 0}^{t - 1}C_i}{t}) + \frac{6\Psi(\tilde{B}_{0}) + 12\sum_{i = 0}^{t - 1}C_i}{t}\Big).
    \end{split}
    \end{equation*}
    We prove the final conclusion using \eqref{sum_of_C_t} from the proof of Theorem~\ref{thm:second_linear_rate} in Appendix~\ref{thm:second_linear_rate_proof}, i.e.,
    \begin{equation*}
        \sum_{i = 0}^{t - 1}C_i \leq C_0 \Psi(\bar{B}_{0})+ \frac{3C_0\kappa}{\alpha(1 - \beta)}.
    \end{equation*}

\subsection{Corollaries of Theorem~\ref{thm:line_search} for \texorpdfstring{$B_0=LI$ and $B_0=\mu I$}{}}\label{sec:line_search_results}

\begin{corollary}[$B_0 = LI$]\label{coro:line_search_L}
Suppose that Assumptions~\ref{ass_str_cvx}, \ref{ass_smooth} and \ref{ass_Hess_lip} hold. Let $\{x_t\}_{t\geq 0}$ be the iterates generated by the BFGS method, where the step size satisfies the Armijo-Wolfe conditions in \eqref{sufficient_decrease} and \eqref{curvature_condition}.  For any initial point $x_0 \in \mathbb{R}^{d}$ and the initial Hessian approximation matrix $B_0 = LI$, the average complexity of line search Algorithm~\ref{algo_wolfe} $T_k$ is upper bounded by
\begin{equation*}
\begin{split}
    \Lambda_t & \leq 2 + \log_{2}\Big(1 + \frac{1 - \beta}{\beta - \alpha} + \frac{2(1 - \beta)}{\beta - \alpha}\frac{3C_0\kappa}{\alpha(1 - \beta)t}\Big) \\
    & \quad + 2\log_{2}\Big(\log_{2}16(1 - \alpha) + \log_{2}\big(1 + \frac{3C_0\kappa}{\alpha(1 - \beta)t}\big) + \frac{6d\kappa + \frac{36C_0\kappa}{\alpha(1 - \beta)}}{t}\Big).
\end{split}
\end{equation*}
Moreover, when $t \geq 6d\kappa + \frac{36}{\alpha(1 - \beta)}C_0 \kappa$, we have that
\begin{equation}
    \Lambda_t \leq 2 + \log_{2}{\big(1 + \frac{3(1 - \beta)}{\beta - \alpha}\big)} + 2\log_{2}(5 + \log_{2}2(1 - \alpha)).
\end{equation}
\end{corollary}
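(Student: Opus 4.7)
\textbf{Proof proposal for Corollary~\ref{coro:line_search_L}.} The plan is to specialize the general bound of Theorem~\ref{thm:line_search} to the case $B_0=LI$ and then simplify using the iteration threshold in the second statement. The only quantities in that bound that depend on $B_0$ are $\Psi(\bar{B}_0)$ and $\Psi(\tilde{B}_0)$ (which enter through $\sigma$ and through the term $6\Psi(\tilde{B}_0)+12\sigma$), so the first step is to compute/bound both.

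For $B_0=LI$, we have $\bar{B}_0 = \frac{1}{L}B_0 = I$, hence $\Psi(\bar{B}_0) = \Psi(I) = 0$, and $\tilde{B}_0 = L\,\nabla^2 f(x_*)^{-1}$. Assumptions~\ref{ass_str_cvx} and \ref{ass_smooth} give $\frac{1}{L}I \preceq \nabla^2 f(x_*)^{-1} \preceq \frac{1}{\mu}I$, so $I \preceq \tilde{B}_0 \preceq \kappa I$. This yields $\Psi(\tilde{B}_0) \le \Tr(\kappa I) - d - \log\Det(I) = d(\kappa-1) \le d\kappa$. Substituting $\Psi(\bar{B}_0)=0$ into the definition $\sigma = (\Psi(\bar{B}_0) + \frac{3\kappa}{\alpha(1-\beta)})C_0$ from Theorem~\ref{thm:line_search} gives $\sigma = \frac{3C_0\kappa}{\alpha(1-\beta)}$, and $6\Psi(\tilde{B}_0) + 12\sigma \le 6d\kappa + \frac{36C_0\kappa}{\alpha(1-\beta)}$. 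Plugging these into the bound of Theorem~\ref{thm:line_search} yields directly the first displayed inequality of the corollary.

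For the second claim, I would show that when $t \ge 6d\kappa + \frac{36}{\alpha(1-\beta)}C_0\kappa$ each of the three fractions in the right-hand side is at most $1$. Explicitly, since $t \ge 6d\kappa + \frac{36C_0\kappa}{\alpha(1-\beta)} \ge \frac{3C_0\kappa}{\alpha(1-\beta)}$, we get $\frac{3C_0\kappa}{\alpha(1-\beta)t} \le 1$, so the argument of the first outer logarithm is at most $1 + \frac{1-\beta}{\beta-\alpha} + \frac{2(1-\beta)}{\beta-\alpha} = 1 + \frac{3(1-\beta)}{\beta-\alpha}$. Similarly $\log_2(1 + \frac{3C_0\kappa}{\alpha(1-\beta)t}) \le \log_2 2 = 1$ and $\frac{6d\kappa + 36C_0\kappa/(\alpha(1-\beta))}{t} \le 1$, so the argument of the second outer logarithm is at most $\log_2 16(1-\alpha) + 2 = (3 + \log_2 2(1-\alpha)) + 2 = 5 + \log_2 2(1-\alpha)$. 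Combining these two simplifications produces the stated clean bound.

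No real obstacle is expected: this is a direct specialization of Theorem~\ref{thm:line_search} combined with the algebra already carried out in Corollary~\ref{coro:second_linear_rate} for the $B_0=LI$ case. The only point requiring a little care is the identity $\log_2 16(1-\alpha) = 3 + \log_2 2(1-\alpha)$ used to match the constant $5$ in the final expression, and the verification that the threshold $t \ge 6d\kappa + \frac{36C_0\kappa}{\alpha(1-\beta)}$ is strong enough to bound each of the two fractional contributions by $1$ simultaneously.
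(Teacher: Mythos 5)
Your proposal is correct and follows essentially the same route as the paper: specialize Theorem~\ref{thm:line_search} by computing $\Psi(\bar{B}_0)=0$ and $\Psi(\tilde{B}_0)\le d\kappa$ for $B_0=LI$ (exactly as in the proof of Corollary~\ref{coro:second_linear_rate}), which gives $\sigma=\frac{3C_0\kappa}{\alpha(1-\beta)}$ and the first bound. Your verification of the second claim—bounding $\sigma/t\le 1$ and $\frac{6d\kappa+36C_0\kappa/(\alpha(1-\beta))}{t}\le 1$ under the stated threshold and using $\log_2 16(1-\alpha)=3+\log_2 2(1-\alpha)$—is also correct and in fact spells out details the paper leaves implicit.
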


\begin{proof}
    Since $B_0 = LI$, we have $\bar{B}_0 =  \frac{1}{L} B_0 = I$ and $\tilde{B}_0 = \nabla^2 f(x_*)^{-\frac{1}{2}} B_0 \nabla^2 f(x_*)^{-\frac{1}{2}} = L\nabla^2{f(x_*)}^{-1}$. Using results in the proof of Corollary~\ref{coro:second_linear_rate}, we have 
    \begin{equation*}
        \Psi(\Bar{B}_0) = 0, \qquad \Psi(\Tilde{B}_0) \leq d\kappa.
    \end{equation*}
    Combining these two results with the result in Theorem~\ref{thm:line_search}, we prove the conclusion.
\end{proof}

\begin{corollary}[$B_0 = \mu I$]\label{coro:line_search_mu}
Let $\{x_t\}_{t\geq 0}$ be the iterates generated by the BFGS method with inexact line search \eqref{sufficient_decrease}, \eqref{curvature_condition} and suppose that Assumptions~\ref{ass_str_cvx}, \ref{ass_smooth} and \ref{ass_Hess_lip} hold. For any initial point $x_0 \in \mathbb{R}^{d}$ and the initial Hessian approximation matrix $B_0 = \mu I$, the average complexity of line search Algorithm~\ref{algo_wolfe} $T_k$ is upper bounded by
\begin{equation*}
\begin{split}
    \Lambda_t & \leq 2 + \log_{2}\Big(1 + \frac{1 - \beta}{\beta - \alpha} + \frac{2(1 - \beta)}{\beta - \alpha}\frac{C_0 d\log\kappa + \frac{3C_0\kappa}{\alpha(1 - \beta)}}{t}\Big) \\
    & + 2\log_{2}\Big(\log_{2}16(1 - \alpha) + \log_{2}\big(1 + \frac{C_0 d\log\kappa + \frac{3C_0\kappa}{\alpha(1 - \beta)}}{t}\big) + \frac{6(1 + 2C_0)d\log\kappa +\frac{36C_0\kappa}{\alpha(1 - \beta)}}{t}\Big).
\end{split}
\end{equation*}
Moreover, when $t \geq 6(1 + 2C_0)d\log\kappa +\frac{36C_0\kappa}{\alpha(1 - \beta)}$, we have that
\begin{equation}
    \Lambda_t \leq 2 + \log_{2}{\big(1 + \frac{3(1 - \beta)}{\beta - \alpha}\big)} + 2\log_{2}(5 + \log_{2}2(1 - \alpha)).
\end{equation}
\end{corollary}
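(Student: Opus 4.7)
The plan is to specialize the general line-search complexity bound of Theorem~\ref{thm:line_search} to the choice $B_0=\mu I$, which reduces everything to estimating the two potentials $\Psi(\bar{B}_0)$ and $\Psi(\tilde{B}_0)$. The strategy mirrors the proof of Corollary~\ref{coro:line_search_L}, so the work is really bookkeeping rather than new analysis.

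First I would compute the two weighted initial matrices. With $B_0=\mu I$, we get $\bar{B}_0=\frac{\mu}{L}I=\frac{1}{\kappa}I$ and $\tilde{B}_0=\mu\,\nabla^2 f(x_*)^{-1}$. From the definition of $\Psi$ in \eqref{potential_function}, $\Psi(\bar{B}_0)=\frac{d}{\kappa}-d+d\log\kappa\le d\log\kappa$. Since Assumptions~\ref{ass_str_cvx} and~\ref{ass_smooth} give $\frac{1}{\kappa}I\preceq \tilde{B}_0\preceq I$, monotonicity of $\Psi$ along the PSD order (already used in the proof of Corollary~\ref{coro:second_linear_rate}) yields $\Psi(\tilde{B}_0)\le \Psi(\frac{1}{\kappa}I)\le d\log\kappa$. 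Both estimates are established in Appendix proof of Corollary~\ref{coro:second_linear_rate}, so I would simply cite them.

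Next I would substitute these bounds into the expression for $\sigma$ appearing in Theorem~\ref{thm:line_search}:
\begin{equation*}
\sigma=\Bigl(\Psi(\bar{B}_0)+\tfrac{3\kappa}{\alpha(1-\beta)}\Bigr)C_0\le C_0 d\log\kappa+\tfrac{3C_0\kappa}{\alpha(1-\beta)},
\end{equation*}
and consequently
\begin{equation*}
6\Psi(\tilde{B}_0)+12\sigma\le 6(1+2C_0)d\log\kappa+\tfrac{36C_0\kappa}{\alpha(1-\beta)}.
\end{equation*}
Plugging these directly into the inequality provided by Theorem~\ref{thm:line_search} reproduces the first displayed bound verbatim.

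For the second claim, under the hypothesis $t\ge 6(1+2C_0)d\log\kappa+\tfrac{36C_0\kappa}{\alpha(1-\beta)}$, I would verify two simple inequalities: $\tfrac{6\Psi(\tilde{B}_0)+12\sigma}{t}\le 1$ (immediate from the threshold) and $\tfrac{\sigma}{t}\le \tfrac{1}{12}$ (since the numerator is at most one-twelfth of the lower bound on $t$). These give $\log_2(1+\sigma/t)\le 1$ and $\tfrac{2(1-\beta)}{\beta-\alpha}\tfrac{\sigma}{t}\le \tfrac{2(1-\beta)}{\beta-\alpha}$, so the first logarithm collapses to $\log_2(1+\tfrac{3(1-\beta)}{\beta-\alpha})$, and using $\log_2 16(1-\alpha)=4+\log_2(1-\alpha)$ together with $5+\log_2 2(1-\alpha)=6+\log_2(1-\alpha)$, the inner expression under the second logarithm is bounded by $5+\log_2 2(1-\alpha)$. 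There is no real obstacle here; the only minor care required is in choosing the crude estimate $\Psi(\bar{B}_0)\le d\log\kappa$ (dropping $\tfrac{d}{\kappa}-d$) to keep the final expression clean and symmetric with Corollary~\ref{coro:line_search_L}.
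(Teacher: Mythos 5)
Your proposal is correct and follows the same route as the paper: substitute the bounds $\Psi(\bar{B}_0)\le d\log\kappa$ and $\Psi(\tilde{B}_0)\le d\log\kappa$ for $B_0=\mu I$ into Theorem~\ref{thm:line_search}, then use $\sigma/t\le 1$ and $(6\Psi(\tilde{B}_0)+12\sigma)/t\le 1$ at the stated threshold to collapse the two logarithmic terms to $\log_2(1+\frac{3(1-\beta)}{\beta-\alpha})$ and $2\log_2(5+\log_2 2(1-\alpha))$; your verification of the second claim is in fact more explicit than the paper's. One small caution: $\Psi$ is not monotone with respect to the Loewner order in general (e.g.\ $I\preceq 2I$ but $\Psi(2I)>\Psi(I)=0$), so the step $\Psi(\tilde{B}_0)\le\Psi(\tfrac{1}{\kappa}I)$ should instead be justified as in the paper by bounding $\Tr(\tilde{B}_0)\le\Tr(I)$ and $-\log\Det(\tilde{B}_0)\le-\log\Det(\tfrac{1}{\kappa}I)$ separately (or by noting that $\lambda\mapsto\lambda-1-\log\lambda$ is decreasing on $(0,1]$ and all eigenvalues of $\tilde{B}_0$ lie in $[\tfrac{1}{\kappa},1]$), which still yields the needed $d\log\kappa$.
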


\begin{proof}
Since $B_0 = \mu I$, we have $\bar{B}_0 =  \frac{1}{\kappa} B_0 = I$ and $\tilde{B}_0 = \nabla^2 f(x_*)^{-\frac{1}{2}} B_0 \nabla^2 f(x_*)^{-\frac{1}{2}} = \mu \nabla^2{f(x_*)}^{-1}$. Using results in the proof of Corollary~\ref{coro:first_linear_rate}, we have 
\begin{equation*}
    \Psi(\Bar{B}_0) \leq d\log{\kappa}, \qquad \Psi(\Tilde{B}_0) \leq d\log{\kappa}.
\end{equation*}
Combining these two results with \eqref{eq:superlinear_rate} in Theorem~\ref{thm:superlinear_rate}, we prove the conclusion.
\end{proof}

\newpage

\printbibliography

@Article{goldfarb1970family,
  author  = {Goldfarb, Donald},
  title   = {A family of variable-metric methods derived by variational means},
  number  = {109},
  pages   = {23--26},
  volume  = {24},
  journal = {Mathematics of computation},
  year    = {1970},
}

@Article{ji2023greedy,
  author    = {Ji, Zhen-Yuan and Dai, Yu-Hong},
  title     = {Greedy PSB methods with explicit superlinear convergence},
  number    = {3},
  pages     = {753--786},
  volume    = {85},
  journal   = {Computational Optimization and Applications},
  publisher = {Springer},
  year      = {2023},
}

@Article{broyden1973local,
  author    = {Broyden, Charles George and Dennis Jr, John E and Mor{\'e}, Jorge J},
  title     = {On the local and superlinear convergence of {quasi-Newton} methods},
  number    = {3},
  pages     = {223--245},
  volume    = {12},
  comment   = {Local superlinear convergence},
  file      = {:broyden1973local - On the Local and Superlinear Convergence of Quasi Newton Methods.pdf:PDF},
  groups    = {Classical Quasi-Newton, Local asymptotic convergence},
  journal   = {IMA Journal of Applied Mathematics},
  publisher = {Oxford University Press},
  year      = {1973},
}

@Article{krutikov2023convergence,
  author    = {Krutikov, Vladimir and Tovbis, Elena and Stanimirovi{\'c}, Predrag and Kazakovtsev, Lev},
  title     = {On the Convergence Rate of Quasi-Newton Methods on Strongly Convex Functions with Lipschitz Gradient},
  number    = {23},
  pages     = {4715},
  volume    = {11},
  file      = {:krutikov2023convergence - On the Convergence Rate of Quasi Newton Methods on Strongly Convex Functions with Lipschitz Gradient.pdf:PDF},
  groups    = {Exact line search, Global convergence with LS},
  journal   = {Mathematics},
  publisher = {MDPI},
  year      = {2023},
}

@Article{powell1971convergence,
  author    = {Powell, MJD},
  title     = {On the convergence of the variable metric algorithm},
  number    = {1},
  pages     = {21--36},
  volume    = {7},
  journal   = {IMA Journal of Applied Mathematics},
  publisher = {Oxford University Press},
  year      = {1971},
}

@Article{broyden1965class,
  author    = {Broyden, Charles G},
  title     = {A class of methods for solving nonlinear simultaneous equations},
  number    = {92},
  pages     = {577--593},
  volume    = {19},
  journal   = {Mathematics of computation},
  publisher = {JSTOR},
  year      = {1965},
}

@InProceedings{jiang2023online,
  author      = {Jiang, Ruichen and Jin, Qiujiang and Mokhtari, Aryan},
  booktitle   = {Proceedings of Thirty Sixth Conference on Learning Theory},
  title       = {Online Learning Guided Curvature Approximation: A Quasi-Newton Method with Global Non-Asymptotic Superlinear Convergence},
  pages       = {1962--1992},
  volume      = {195},
  abstract    = {Quasi-Newton algorithms are among the most popular iterative methods for solving unconstrained minimization problems, largely due to their favorable superlinear convergence property. However, existing results for these algorithms are limited as they provide either (i) a global convergence guarantee with an asymptotic superlinear convergence rate, or (ii) a local non-asymptotic superlinear rate for the case that the initial point and the initial Hessian approximation are chosen properly. In particular, no current analysis for quasi-Newton methods guarantees global convergence with an explicit superlinear convergence rate. In this paper, we close this gap and present  the first globally convergent quasi-Newton method with an explicit non-asymptotic superlinear convergence rate. Unlike classical quasi-Newton methods, we build our algorithm upon the hybrid proximal extragradient method and propose a novel online learning framework for updating the Hessian approximation matrices. Specifically, guided by the convergence analysis, we formulate the Hessian approximation update as an online convex optimization problem in the space of matrices, and we relate the bounded regret of the online problem to the superlinear convergence of our method.},
  xxmonth       = {Jul},
  pdf         = {https://proceedings.mlr.press/v195/jiang23a/jiang23a.pdf},
  xxeditor    = {Neu, Gergely and Rosasco, Lorenzo},
  xxpublisher = {PMLR},
  xxseries    = {Proceedings of Machine Learning Research},
  xxurl       = {https://proceedings.mlr.press/v195/jiang23a.html},
  year        = {2023},
}

@Article{byrd1996analysis,
  author    = {Byrd, Richard H and Khalfan, Humaid Fayez and Schnabel, Robert B},
  title     = {Analysis of a symmetric rank-one trust region method},
  number    = {4},
  pages     = {1025--1039},
  volume    = {6},
  file      = {:byrd1996analysis - Analysis of a Symmetric Rank One Trust Region Method.pdf:PDF},
  groups    = {Classical Quasi-Newton},
  journal   = {SIAM Journal on Optimization},
  publisher = {SIAM},
  year      = {1996},
}

@Article{shanno1970conditioning,
  author  = {Shanno, David F},
  title   = {Conditioning of {quasi-Newton} methods for function minimization},
  number  = {111},
  pages   = {647--656},
  volume  = {24},
  journal = {Mathematics of computation},
  year    = {1970},
}

@Article{ye2023towards,
  author    = {Ye, Haishan and Lin, Dachao and Chang, Xiangyu and Zhang, Zhihua},
  title     = {Towards explicit superlinear convergence rate for SR1},
  number    = {1},
  pages     = {1273--1303},
  volume    = {199},
  journal   = {Mathematical Programming},
  publisher = {Springer},
  year      = {2023},
}

@Article{qiujiang2020quasinewton,
  author  = {Jin, Qiujiang and Mokhtari, Aryan},
  title   = {Non-asymptotic Superlinear Convergence of Standard Quasi-Newton Methods},
  journal = {Mathematical Programming, Volume 200, pages 425–473},
  year    = {2022},
}

@Article{conn1991convergence,
  author    = {Conn, Andrew R. and Gould, Nicholas I. M. and Toint, Ph L},
  title     = {Convergence of quasi-{Newton} matrices generated by the symmetric rank one update},
  number    = {1-3},
  pages     = {177--195},
  volume    = {50},
  journal   = {Mathematical programming},
  publisher = {Springer},
  year      = {1991},
}

@Article{QN_tool,
  author  = {Richard H. Byrd and Jorge Nocedal},
  title   = {A Tool for the Analysis of Quasi-Newton Methods with Application to Unconstrained Minimization},
  journal = {SIAM Journal on Numerical Analysis, Vol. 26, No. 3},
  year    = {1989},
}

@Article{yabe2007local,
  author    = {Yabe, Hiroshi and Ogasawara, Hideho and Yoshino, Masayuki},
  title     = {Local and superlinear convergence of quasi-{Newton} methods based on modified secant conditions},
  number    = {1},
  pages     = {617--632},
  volume    = {205},
  journal   = {Journal of Computational and Applied Mathematics},
  publisher = {Elsevier},
  year      = {2007},
}

@TechReport{davidon1959variable,
  author      = {Davidon, WC},
  institution = {Argonne National Lab., Lemont, Ill.},
  title       = {Variable metric method for minimization},
  year        = {1959},
}

@Book{nocedal2006numerical,
  author    = {Nocedal, Jorge and Wright, Stephen},
  title     = {Numerical optimization},
  publisher = {Springer Science Business Media},
  year      = {2006},
}

@Article{rodomanov2020ratesnew,
  author    = {Rodomanov, Anton and Nesterov, Yurii},
  title     = {New Results on Superlinear Convergence of Classical Quasi-Newton Methods},
  number    = {3},
  pages     = {744-769},
  volume    = {188},
  journal   = {Journal of Optimization Theory and Applications},
  publisher = {Springer Nature BV},
  year      = {2021},
}

@Article{byrd1987global,
  author    = {Byrd, Richard H and Nocedal, Jorge and Yuan, Y.},
  title     = {Global convergence of a class of {quasi-Newton} methods on convex problems},
  number    = {5},
  pages     = {1171--1190},
  volume    = {24},
  journal   = {SIAM Journal on Numerical Analysis},
  publisher = {SIAM},
  year      = {1987},
}

@Article{lin2022explicit,
  author  = {Lin, Dachao and Ye, Haishan and Zhang, Zhihua},
  title   = {Explicit convergence rates of greedy and random {quasi-Newton} methods},
  number  = {162},
  pages   = {1--40},
  volume  = {23},
  journal = {Journal of Machine Learning Research},
  year    = {2022},
}

@Article{broyden1970convergence,
  author  = {Broyden, Charles G},
  title   = {The convergence of single-rank {quasi-Newton} methods},
  number  = {110},
  pages   = {365--382},
  volume  = {24},
  journal = {Mathematics of Computation},
  year    = {1970},
}

@Article{jiang2023accelerated,
  author  = {Jiang, Ruichen and Mokhtari, Aryan},
  title   = {Accelerated quasi-newton proximal extragradient: Faster rate for smooth convex optimization},
  volume  = {36},
  journal = {Advances in Neural Information Processing Systems},
  year    = {2023},
}

@Article{dennis1989convergence,
  author    = {Dennis, JE and Martinez, H{\'e}ctor J and Tapia, Richard A},
  title     = {Convergence theory for the structured {BFGS} secant method with an application to nonlinear least squares},
  number    = {2},
  pages     = {161--178},
  volume    = {61},
  journal   = {Journal of Optimization Theory and Applications},
  publisher = {Springer},
  year      = {1989},
}

@Article{mokhtari2017iqn,
  author    = {Mokhtari, Aryan and Eisen, Mark and Ribeiro, Alejandro},
  title     = {{IQN}: An incremental quasi-{Newton} method with local superlinear convergence rate},
  number    = {2},
  pages     = {1670--1698},
  volume    = {28},
  journal   = {SIAM Journal on Optimization},
  publisher = {SIAM},
  year      = {2018},
}

@Article{al1998global,
  author    = {Al-Baali, Mehiddin},
  title     = {Global and superlinear convergence of a restricted class of self-scaling methods with inexact line searches, for convex functions},
  number    = {2},
  pages     = {191--203},
  volume    = {9},
  journal   = {Computational Optimization and Applications},
  publisher = {Springer},
  year      = {1998},
}

@Article{dennis1974characterization,
  author  = {Dennis, John E and Mor{\'e}, Jorge J},
  title   = {A characterization of superlinear convergence and its application to {quasi-Newton} methods},
  number  = {126},
  pages   = {549--560},
  volume  = {28},
  journal = {Mathematics of computation},
  year    = {1974},
}

@Article{gao2019quasi,
  author    = {Gao, Wenbo and Goldfarb, Donald},
  title     = {Quasi-Newton methods: superlinear convergence without line searches for self-concordant functions},
  number    = {1},
  pages     = {194--217},
  volume    = {34},
  journal   = {Optimization Methods and Software},
  publisher = {Taylor \& Francis},
  year      = {2019},
}

@Article{rodomanov2020greedy,
  author    = {Rodomanov, Anton and Nesterov, Yurii},
  title     = {Greedy Quasi-Newton Methods with Explicit Superlinear Convergence},
  number    = {1},
  pages     = {785-811},
  volume    = {31},
  journal   = {SIAM Journal on Optimization},
  publisher = {Society for Industrial and Applied Mathematics},
  year      = {2021},
}

@Article{yuan1991modified,
  author    = {Yuan, Y.},
  title     = {A modified {BFGS} algorithm for unconstrained optimization},
  number    = {3},
  pages     = {325--332},
  volume    = {11},
  journal   = {IMA Journal of Numerical Analysis},
  publisher = {Oxford University Press},
  year      = {1991},
}

@Article{rodomanov2020rates,
  author    = {Rodomanov, Anton and Nesterov, Yurii},
  title     = {Rates of Superlinear Convergence for Classical Quasi-Newton Methods},
  pages     = {1-32},
  journal   = {Mathematical Programming},
  publisher = {Springer Berlin Heidelberg},
  year      = {2021},
}

@Article{lin2021greedy,
  author  = {Lin, Dachao and Ye, Haishan and Zhang, Zhihua},
  title   = {Greedy and random {quasi-Newton} methods with faster explicit superlinear convergence},
  pages   = {6646--6657},
  volume  = {34},
  journal = {Advances in Neural Information Processing Systems},
  year    = {2021},
}

@Article{fletcher1970new,
  author    = {Fletcher, Roger},
  title     = {A new approach to variable metric algorithms},
  number    = {3},
  pages     = {317--322},
  volume    = {13},
  journal   = {The computer journal},
  publisher = {Oxford University Press},
  year      = {1970},
}

@Article{griewank1982local,
  author    = {Griewank, Andreas and Toint, Ph L},
  title     = {Local convergence analysis for partitioned {quasi-Newton} updates},
  number    = {3},
  pages     = {429--448},
  volume    = {39},
  journal   = {Numerische Mathematik},
  publisher = {Springer},
  year      = {1982},
}

@Article{Powell,
  author    = {Powell, Michael JD},
  title     = {Some global convergence properties of a variable metric algorithm for minimization without exact line searches},
  number    = {1},
  pages     = {53--72},
  volume    = {9},
  journal   = {Nonlinear programming},
  publisher = {American Mathematical Society, Providence, RI, USA},
  year      = {1976},
}

@Article{fletcher1963rapidly,
  author    = {Fletcher, Roger and Powell, Michael JD},
  title     = {A rapidly convergent descent method for minimization},
  number    = {2},
  pages     = {163--168},
  volume    = {6},
  journal   = {The computer journal},
  publisher = {Oxford University Press},
  year      = {1963},
}

@Article{li1999globally,
  author    = {Li, Donghui and Fukushima, Masao},
  title     = {A Globally and Superlinearly Convergent {Gauss--Newton}-Based {BFGS} Method for Symmetric Nonlinear Equations},
  number    = {1},
  pages     = {152--172},
  volume    = {37},
  journal   = {SIAM Journal on Numerical Analysis},
  publisher = {SIAM},
  year      = {1999},
}

@Article{khalfan1993theoretical,
  author    = {Khalfan, H Fayez and Byrd, Richard H and Schnabel, Robert B},
  title     = {A theoretical and experimental study of the symmetric rank-one update},
  number    = {1},
  pages     = {1--24},
  volume    = {3},
  file      = {:khalfan1993theoretical - A Theoretical and Experimental Study of the Symmetric Rank One Update.pdf:PDF},
  groups    = {Classical Quasi-Newton},
  journal   = {SIAM J. Optim.},
  publisher = {SIAM},
  year      = {1993},
}

@article{jin2024non,
  title={Non-asymptotic Global Convergence Rates of BFGS with Exact Line Search},
  author={Jin, Qiujiang and Jiang, Ruichen and Mokhtari, Aryan},
  journal={arXiv preprint arXiv:2404.01267},
  year={2024}
}

@Article{wolfe1,
  author       = {Philip Wolfe},
  title        = {Convergence Conditions for Ascent Methods},
  number       = {2},
  pages        = {226–-235},
  volume       = {11},
  date         = {1969},
  journaltitle = {SIAM Review},
}

@Article{wolfe2,
  author       = {Philip Wolfe},
  title        = {Convergence Conditions for Ascent Methods. II: Some Corrections},
  number       = {2},
  pages        = {185-–188},
  volume       = {13},
  date         = {1971},
  journaltitle = {SIAM Review},
}

@inproceedings{gower2016stochastic,
  title={Stochastic block {BFGS}: Squeezing more curvature out of data},
  author={Gower, Robert and Goldfarb, Donald and Richt{\'a}rik, Peter},
  booktitle={Int. Conference on Machine Learning},
  pages={1869--1878},
  year={2016},
  organization={PMLR}
}

@article{gower2017randomized,
  title={Randomized quasi-Newton updates are linearly convergent matrix inversion algorithms},
  author={Gower, Robert M and Richt{\'a}rik, Peter},
  journal={SIAM Journal on Matrix Analysis and Applications},
  volume={38},
  number={4},
  pages={1380--1409},
  year={2017},
  publisher={SIAM}
}

@article{kovalev2020fast,
  title={Fast linear convergence of randomized BFGS},
  author={Kovalev, Dmitry and Gower, Robert M and Richt{\'a}rik, Peter and Rogozin, Alexander},
  journal={arXiv preprint arXiv:2002.11337},
  year={2020}
}

@book{convexlecture,
    author = {Yurii Nesterov},
    title = {Lectures on convex optimization},
    publisher = {Springer Optimization and Its Applications (SOIA, volume 137)},
    year = {2018}
}

@article{antonglobal,
  title={Global Complexity Analysis of BFGS},
  author={Anton Rodomanov},
  journal={arXiv preprint arXiv:2404.15051},
  year={2024}
}

\end{document}